\documentclass{amsart}
\usepackage{amssymb}
\usepackage{mathrsfs}
\usepackage{amsmath}
\usepackage{amscd}
\usepackage{mathdots}
\usepackage{graphicx}
\usepackage{subcaption}
\usepackage{wrapfig}
\usepackage{caption}

\usepackage{pinlabel}
\usepackage{enumitem}
\usepackage{graphicx} 

\newtheorem{theorem}{Theorem}
\newtheorem{prop}[theorem]{Proposition}
\newtheorem{lemma}[theorem]{Lemma}
\newtheorem{corollary}[theorem]{Corollary}
\newtheorem{definition}[theorem]{Definition}
\newtheorem{ass}[theorem]{Assumption}
\newtheorem{remark}[theorem]{Remark}

\title{Harmonic metrics of generically regular nilpotent Higgs bundles over non-compact surfaces}


\numberwithin{equation}{section}
\numberwithin{theorem}{section}

\usepackage{fancyhdr}

\author{Song Dai}

\address{Song Dai\\Center for Applied Mathematics and KL-AAGDM, Tianjin University\\No. 92 Weijinlu Nankai District\\ Tianjin\\ P.R. China 300072}
\email{song.dai@tju.edu.cn}

\author{Qiongling Li}

\address{Qiongling Li\\
Chern Institute of Mathematics and LPMC\\
Nankai University\\
No. 94 Weijinlu Nankai District\\
Tianjin\\
P.R.China 300071}

\email{qiongling.li@nankai.edu.cn}

\begin{document}
\pagestyle{fancy}
\fancyhead{} 
\fancyhead[CO]{Harmonic metrics of generically regular nilpotent Higgs bundles}
\fancyhead[CE]{Song Dai and Qiongling Li}
\begin{abstract}

A rank $n$ Higgs bundle $(E,\theta)$ is called generically regular nilpotent if $\theta^n=0$ but $\theta^{n-1}\neq 0$. 
We show that for a generically regular nilpotent Higgs bundle, if it admits a harmonic metric, then its graded Higgs bundle admits a unique maximal harmonic metric. The proof relies on a generalization of Kalka-Yang's theorem for prescribed curvature equation over a non-compact hyperbolic surface to a coupled system. As an application, we show that the branched set of a branched minimal disk in $\mathbb{H}^3$ has to be the critical set of some holomorphic self-map of $\mathbb{D}$. 
\end{abstract}

\maketitle


\section{Introduction}

Let \((E, \theta)\) be a Higgs bundle over a Riemann surface \(X\). A Hermitian metric \(h\) on \(E\) is called harmonic if it satisfies the Hitchin equation
\[
F(\nabla_h) + [\theta, \theta^{*_h}] = 0.
\]
For compact Riemann surfaces \(X\), it is well-known from the works of Hitchin \cite{selfduality} and Simpson \cite{s1} that \((E, \theta)\) admits a harmonic metric if and only if it is polystable of degree \(0\).

In the case of non-compact \(X\), the criteria for the existence of harmonic metrics are less well understood. For example, when \(X = \bar{X} \setminus D\) and \((E, \theta)\) is wild on \((\bar{X}, D)\), results by Simpson \cite{SimpsonNoncompact}, Biquard-Boalch \cite{Biquard-Boalch}, and Mochizuki \cite{Mochizuki-wild} indicate that \((E, \theta)\) admits a harmonic metric if and only if it can be extended to a polystable good filtered Higgs bundle over \(\bar{X}\). However, even simpler cases, such as when \(X = \mathbb{D}\), fall outside the framework of wild Higgs bundles.

Recent work by T. Mochizuki and the second author has addressed more general scenarios beyond the wild Higgs bundle framework. They have developed techniques to establish the existence of harmonic metrics without stability conditions. In \cite{LiMochizukiGeneric}, they showed that for a \(GL(n, \mathbb{R})\)-Higgs bundle over any non-compact Riemann surface, if the Higgs field is generically regular semisimple, then a harmonic metric compatible with the \(GL(n, \mathbb{R})\)-structure exists. Furthermore, in \cite{LiMochizukiHitchinSection}, they proved that if \(X\) is hyperbolic, a Higgs bundle in the $SL(n,\mathbb R)$-Hitchin section admits a harmonic metric. See also the work of \cite{fujioka2024harmonic} and \cite{ma2024}.

In this paper, we investigate the existence of harmonic metrics for nilpotent Higgs bundles \((E, \theta)\) over non-compact Riemann surfaces \(X\). Notably, nilpotent Higgs bundles do not admit harmonic metrics over \(\mathbb{C}\) or \(\mathbb{C}^*\). Therefore, our focus is on the case where \(X\) is Poincaré hyperbolic, meaning that \(X\) admits a complete hyperbolic metric \(g_X\). According to \cite{LiNilpotent}, for any harmonic metric \(h\) on a nilpotent Higgs bundle \((E, \theta)\) over \(X\), the norm of \(\theta\) with respect to \(h\) and \(g_X\) satisfies
\[
||\theta||_{h,g_X}^2 \leq \frac{n(n^2-1)}{12}.
\]

Note that any nilpotent Higgs bundle together with a harmonic metric $h$ gives rise to an equivariant minimal immersion of $\widetilde{X}$ in the symmetric space $GL(n,\mathbb C)/U(n)$ with the left $GL(n,\mathbb C)$-invariant Riemannian metric induced by the trace form on $\mathfrak gl(n,\mathbb C)$. The induced Hermitian metric is $|\theta|_h^2$, and thus $\leq \frac{n(n^2-1)}{12}\hat g_X.$

We establish the following results:

\begin{theorem}\label{intro:main1}
    (Theorem \ref{main theoremLater})
    Let \((E, \theta)\) be a generically regular nilpotent Higgs bundle over a hyperbolic Riemann surface \(X\). If a harmonic metric \(h_0\) exists on \((E, \theta)\), then there exists a metric \(h\) on the graded Higgs bundle \(\text{Gr}_{\mathcal{G}}(E, \theta) = (\oplus_{i=1}^n \text{Gr}_iE, \oplus_{i=1}^{n-1} \text{Gr}_i \theta)\) such that \(\det(h) = \det(h_0)\). 
\end{theorem}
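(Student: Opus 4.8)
The plan is to reduce the construction of $h$ to a coupled system of scalar PDEs on the line-bundle chain underlying $\mathrm{Gr}_{\mathcal{G}}(E,\theta)$, and then to solve that system with the generalized Kalka--Yang theorem; the substantive content is that $h$ can be taken harmonic, as any metric merely matching a prescribed determinant exists trivially. Since $\theta$ is generically regular nilpotent, the associated graded is a chain of line bundles $L_n \to L_{n-1} \to \cdots \to L_1$ with $L_i = \mathrm{Gr}_i E$ and holomorphic components $\gamma_i := \mathrm{Gr}_i\theta \in H^0(\mathrm{Hom}(L_{i+1},L_i)\otimes K_X)$ for $i=1,\dots,n-1$. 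A metric on $\mathrm{Gr}_{\mathcal{G}}(E,\theta)$ compatible with the grading is a diagonal collection $h=\mathrm{diag}(h_1,\dots,h_n)$, for which $[\theta,\theta^{*_h}]$ is diagonal with tridiagonal coupling between consecutive factors. Writing $h_i = e^{u_i}h_i^0$ relative to fixed background metrics $h_i^0$, the Hitchin equation for the chain becomes a Toda-type system in which the equation at index $i$ couples $u_i$ only to $u_{i-1}$ and $u_{i+1}$ through curvature factors proportional to $|\gamma_{i-1}|^2 e^{u_{i-1}-u_i}$ and $|\gamma_i|^2 e^{u_i-u_{i+1}}$.

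The determinant constraint enters structurally. Taking the trace of the Hitchin equation and using $\mathrm{tr}[\theta,\theta^{*_h}]=0$, the metric $\det h = \prod_i h_i$ is forced to be flat on $\det E \cong \det\mathrm{Gr}_{\mathcal{G}}E$; the same holds for $\det h_0$, since $h_0$ is harmonic on $(E,\theta)$. Two flat Hermitian metrics on a line bundle over the connected surface $X$ differ by a global harmonic function, which can be absorbed into the overall normalization, so that $\det h = \det h_0$ is imposed exactly as one linear relation $\sum_i u_i = \mathrm{const}$. This pins down the trace direction and leaves the $n-1$ difference variables $v_i := u_i - u_{i+1}$, which satisfy a system decoupled from the trace. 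The problem is thus precisely a coupled prescribed-curvature system of $n-1$ equations over the complete hyperbolic $(X,g_X)$, explaining both why the determinant is the natural normalization and why a coupled rather than scalar Kalka--Yang theorem is required.

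I would then construct barriers for this system. Using the a priori bound $\|\theta\|_{h_0,g_X}^2 \le n(n^2-1)/12$ from \cite{LiNilpotent} together with comparison against $g_X$, one builds sub- and supersolutions adapted to the tridiagonal structure; the cooperative sign pattern of the adjacent exponential couplings is what renders the system amenable to the generalized Kalka--Yang machinery. Feeding these barriers into the coupled-system version of Kalka--Yang produces a solution $(u_i)$ on all of $X$ with controlled behavior at the ends, which reassembles into a harmonic metric $h$ on $\mathrm{Gr}_{\mathcal{G}}(E,\theta)$ satisfying $\det h = \det h_0$.

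The main obstacle I expect is analytic rather than algebraic: establishing the a priori estimates and barriers for the coupled system on the non-compact complete surface, where compactness and direct variational methods are unavailable and the exponential couplings degenerate along the zero divisors of the $\gamma_i$, i.e. the locus where $\theta^{n-1}$ drops rank. Controlling the solution near the ends of $X$, so that it genuinely defines a metric realizing $\det h_0$ and so that the subsequent uniqueness and maximality assertions hold, is the delicate point, and is exactly where the generalization of Kalka--Yang from a single scalar equation to the coupled Toda system carries the argument.
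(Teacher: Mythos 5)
Your reduction to a Toda-type system for the diagonal metric on the chain $\mathrm{Gr}_{\mathcal G}(E,\theta)$, and the observation that the determinant constraint is one linear relation leaving $n-1$ coupled prescribed-curvature equations, both match the paper's setup (Sections \ref{generically regular}--\ref{Kalka-Yang}). The final step — feeding a subsolution into the coupled generalization of Kalka--Yang (Proposition \ref{prop:main1}) to get a maximal solution, hence a diagonal harmonic metric with the prescribed determinant — is also the paper's route.

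The genuine gap is where the subsolution comes from. You propose to build barriers from the universal a priori bound $\|\theta\|_{h_0,g_X}^2\le n(n^2-1)/12$ together with comparison against $g_X$. That cannot work: such a construction would use nothing about $(E,\theta)$ beyond nilpotency and would therefore prove unconditional existence of harmonic metrics on every CVHS of type $(1,\dots,1)$ over $\mathbb D$, contradicting Proposition \ref{prop:NonExistence}. The hypothesis that $h_0$ exists must enter essentially, and in the paper it does so through Proposition \ref{prop 1}: decomposing $E$ smoothly as $\oplus L_i$ using $h_0$-orthogonal complements of the canonical filtration $G_k=\ker\theta^{n-k}$, restricting the Hitchin equation to the $\mathrm{Hom}(G_k^\perp,G_k^\perp)$ block, taking the trace, and discarding the terms $-\beta^{*_h}\wedge\beta$ and the off-$\gamma_k$ part of $B^{*_h}\wedge B$ (both of which have a definite sign after contraction with $\omega_X$) yields
\[
-\sqrt{-1}\,\Lambda_{\omega_X}F_{\det(E/G_k)}+\|\gamma_k\|_{h_0,g_X}^2\le 0,
\]
i.e.\ the functions $u_i$ built from $\det(h_0|_{G_k})$ form a subsolution of the graded system. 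This block computation is the central step that converts ``$h_0$ is harmonic on the possibly non-split $(E,\theta)$'' into analytic input for the graded bundle, and it is absent from your proposal. (Two smaller points: Proposition \ref{prop:main1} needs only a subsolution — the supersolutions with boundary value $+\infty$ are manufactured inside the Kalka--Yang argument on exhausting domains, so you need not construct them from geometric data; and over a non-compact $X$ a global harmonic function relating two flat metrics on $\det E$ cannot simply be ``absorbed into the normalization'' — the paper instead fixes the background $\tilde h$ with $\det\tilde h=\det h_0$ from the outset.)
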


\begin{theorem}\label{intro:main2}
    (Theorem \ref{main theoremLater})
    Let $(E,\theta)$ be a CVHS of type $(1,\cdots,1)$ over $X$ of the form:
        \[
    E = \oplus_{i=1}^n L_i, \quad \theta = \begin{pmatrix}
0 & & & \\
\gamma_1 & 0 & & \\
& \ddots & \ddots & \\
& & \gamma_{n-1} & 0
\end{pmatrix},
    \] where $L_i(i=1,\cdots,n)$ are holomorphic line bundles and $\gamma_i\in H^0(X,\text{Hom}(L_i,L_{i+1})\otimes K_X)$ is nonzero for $i=1,\cdots,n-1$.
    
    If there exists a harmonic metric \(h_0\) exists on \((E, \theta)\), then there uniquely exists a maximal harmonic metric \(h_{max}\) on $(E,\theta)$ satisfying \(\det(h_{max}) = \det(h_0)\). Furthermore, \(h_{max}\) is diagonal, that is, $h_{max}=\oplus_{i=1}^nh_{max}|_{L_i}$.
\end{theorem}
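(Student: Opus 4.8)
The plan is to reduce Theorem \ref{intro:main2} to a scalar PDE problem by diagonalizing the metric and writing down the Hitchin equation as a coupled system for the logarithms of the norms on the line bundles.

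The plan is to reduce Theorem \ref{intro:main2} to a scalar PDE problem by diagonalizing the metric and writing down the Hitchin equation as a coupled system for the logarithms of the norms on the line bundles. The CVHS structure of type $(1,\dots,1)$ supplies the symmetry that forces diagonality: for $\phi\in\mathbb R$ let $u(\phi)=\mathrm{diag}(e^{i\phi},e^{2i\phi},\dots,e^{ni\phi})$ act on $E=\oplus_i L_i$. Since $\theta$ shifts the grading by exactly one step, $u(\phi)$ is a holomorphic bundle automorphism with $u(\phi)\,\theta\,u(\phi)^{-1}=e^{i\phi}\theta$, and because the Hitchin operator $F(\nabla_h)+[\theta,\theta^{*_h}]$ sees $\theta$ only through $[\theta,\theta^{*_h}]$, which is invariant under $\theta\mapsto e^{i\phi}\theta$, the pullback $u(\phi)^*h_0$ is again harmonic with the same determinant. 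Thus the set of harmonic metrics with determinant $\det(h_0)$ is $S^1$-invariant, so once uniqueness of the maximal metric is known it must be $S^1$-fixed, i.e. diagonal. It therefore suffices to search among diagonal metrics $h=\oplus_i e^{u_i}$.

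For a diagonal metric both the Chern connection and $[\theta,\theta^{*_h}]$ are diagonal, and writing $\gamma_i$ in a local frame the $(i,i)$-component of the Hitchin equation becomes
\[
\partial_z\partial_{\bar z}u_i=|\gamma_{i-1}|^2 e^{u_i-u_{i-1}}-|\gamma_i|^2 e^{u_{i+1}-u_i},\qquad i=1,\dots,n,
\]
with the convention $\gamma_0=\gamma_n=0$. Summing telescopes to $\partial_z\partial_{\bar z}(\sum_i u_i)=0$, consistent with $\det E$ carrying a flat metric; hence the normalization $\det(h)=\det(h_0)$ fixes $\sum_i u_i$ and is automatically preserved. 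Introducing the gap functions $v_i=u_{i+1}-u_i$ for $i=1,\dots,n-1$ removes the trace and yields the $A_{n-1}$ Toda system
\[
\partial_z\partial_{\bar z}v_i=2|\gamma_i|^2 e^{v_i}-|\gamma_{i-1}|^2 e^{v_{i-1}}-|\gamma_{i+1}|^2 e^{v_{i+1}}.
\]
The coefficient matrix is the positive-definite Cartan matrix of $SL(n)$, so each $v_i$ plays the role of the conformal factor of a negatively curved metric: for $n=2$ this is exactly the prescribed-curvature equation $\partial_z\partial_{\bar z}v_1=2|\gamma_1|^2 e^{v_1}$ of Kalka-Yang, whose maximal solution is the complete hyperbolic metric, and the general case is the coupled generalization announced in the introduction. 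Recovering the $u_i$ from $(v_i)$ and the fixed trace is immediate, so the whole problem reduces to producing the maximal solution $(v_i)$ of this Toda system on $X$.

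To solve it I would exhaust $X$ by relatively compact subsurfaces $X_k$ with smooth boundary, solve the Dirichlet problem for the Toda system on each $X_k$ with boundary values tending to $+\infty$ to obtain the maximal solution on $X_k$, and pass to the limit. The diagonalization of $h_0$ provides a global supersolution; equivalently the a priori bound $\|\theta\|_{h_0,g_X}^2\le \frac{n(n^2-1)}{12}$ from \cite{LiNilpotent} furnishes upper barriers for the $v_i$. This is precisely the input that is unavailable over $\mathbb C$ or $\mathbb C^*$ and that the hypothesis ``$h_0$ exists'' is there to guarantee. A vector-valued comparison principle adapted to the Toda sign pattern then makes the $X_k$-solutions monotone in $k$ and bounded above, so they converge to a solution on $X$ that dominates every competitor with the same determinant; this limit defines $h_{max}$, and the same comparison principle yields its uniqueness and maximality, while $S^1$-invariance of the construction re-confirms that $h_{max}$ is diagonal.

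The main obstacle I expect is exactly this comparison step: the off-diagonal terms $-|\gamma_{i\pm1}|^2 e^{v_{i\pm1}}$ destroy the naive scalar maximum principle, so the heart of the argument must be a monotone iteration or barrier scheme for the coupled system that stays ordered by exploiting the Cartan-matrix structure, together with control of the $v_i$ near the non-compact ends of $X$ (including near the zeros of the $\gamma_i$), where the bound coming from $h_0$ is what prevents the maximal solution from escaping to $+\infty$.
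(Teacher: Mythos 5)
Your overall strategy (reduce to a Toda-type system for the logarithms of the diagonal entries, exhaust $X$ by bounded domains, solve Dirichlet problems with large boundary data, and pass to the limit using a comparison principle for the cooperative system) is essentially the route the paper takes via Proposition \ref{prop:main1}. But there is a genuine gap at the point where you declare ``it therefore suffices to search among diagonal metrics.'' The theorem asserts that $h_{max}$ weakly dominates \emph{every} harmonic metric with the same determinant, and the hypothesized $h_0$ is itself not assumed diagonal. Your $S^1$-argument only shows that \emph{if} a unique maximal metric exists \emph{then} it is $S^1$-fixed, hence diagonal; it gives no way to compare your diagonal candidate with a non-diagonal harmonic metric, nor to extract a barrier from a non-diagonal $h_0$ (averaging over $S^1$ does not preserve harmonicity, and the ``diagonal part'' of $h_0$ is not a harmonic metric). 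The missing ingredient is Proposition \ref{prop 1}(1): for an \emph{arbitrary} harmonic metric $h$, restricting the Hitchin equation to $\mathrm{Hom}(G_k^\perp,G_k^\perp)$, taking traces, and discarding the nonpositive terms $-\sqrt{-1}\Lambda\,\mathrm{tr}(\beta^{*_h}\wedge\beta)$ and the off-diagonal contributions to $\mathrm{tr}(B^{*_h}\wedge B)$ yields a differential \emph{inequality} for $\det(h|_{G_k})$, i.e.\ the tuple $(u_1,\dots,u_{n-1})$ built from the determinants of $h$ on the filtration is a \emph{subsolution} of the system. This single inequality is what simultaneously (i) produces the global barrier from $h_0$ and (ii) proves that the maximal solution of the diagonal system dominates all harmonic metrics, diagonal or not.

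Two smaller corrections. First, the barrier direction is reversed in your write-up: the harmonic metric $h_0$ supplies a \emph{lower} barrier (a subsolution in the paper's normalization), which is what prevents the decreasing exhaustion limits from escaping to $-\infty$; the \emph{upper} control near $\partial\Omega_j$ comes instead from the essential positivity of $\|\gamma_i\|^2$ there, by comparison with the constant-curvature metric on a collar (Lemma \ref{step1}), not from the bound $\|\theta\|^2\le n(n^2-1)/12$. Second, the vector comparison principle you flag as the main obstacle is genuinely needed and is supplied in the paper by Lemma \ref{lem:comparisonprincipal} together with the maximum principle for cooperative, row-diagonally-dominant, fully coupled systems in the appendix; the linearized coefficient matrix coming from the Cartan-matrix exponents $2u_i-u_{i-1}-u_{i+1}$ has exactly the required sign pattern, so this step does go through, but it must be proved rather than assumed.
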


\begin{theorem}(Theorem \ref{main theoremLater1})
    For the base \(n\)-Fuchsian Higgs bundle, where
    \[
    E = \oplus_{i=1}^n K_X^{\frac{n+1-2i}{2}}, \quad \theta = \begin{pmatrix}
0 & & & \\
\gamma_1 & 0 & & \\
& \ddots & \ddots & \\
& & \gamma_{n-1} & 0
\end{pmatrix},\quad \text{for $\gamma_i=\sqrt{\frac{i(n-i)}{2}},$}
    \] 
    the Hermitian metric \(h_X = \oplus_{i=1}^n \hat g_X^{-\frac{n+1-2i}{2}}\) is the maximal harmonic metric among all (not necessarily diagonal) harmonic metrics of unit determinant.
\end{theorem}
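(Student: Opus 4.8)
The plan is to derive this from Theorem \ref{intro:main2}. The base $n$-Fuchsian Higgs bundle is a CVHS of type $(1,\dots,1)$: each $L_i=K_X^{(n+1-2i)/2}$ is a line bundle and, since $\mathrm{Hom}(L_i,L_{i+1})\otimes K_X\cong\mathcal O_X$, each $\gamma_i=\sqrt{i(n-i)/2}$ is a nonzero constant. I would first record that $h_X$ is itself a harmonic metric of unit determinant. Being diagonal, its Hitchin equation decouples into a Toda-type system whose interaction coefficients are exactly $|\gamma_i|^2=i(n-i)/2$; these are the coefficients of the principal $\mathfrak{sl}_2$-embedding, for which the system collapses to the single constant-curvature equation solved by the complete hyperbolic metric $g_X$, so $h_X$ is harmonic. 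Moreover $\det h_X=\hat g_X^{-\frac12\sum_i(n+1-2i)}=\hat g_X^0=1$. Hence Theorem \ref{intro:main2} applies with $h_0=h_X$ and yields a unique maximal harmonic metric $h_{max}$ of unit determinant, which is furthermore diagonal. As $h_{max}$ dominates every harmonic metric of unit determinant—diagonal or not—the assertion reduces to the single identity $h_X=h_{max}$.

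To prove $h_X=h_{max}$ I would compare these two diagonal unit-determinant metrics directly. Write $h_{max}=h_X\cdot\mathrm{diag}(e^{2\xi_1},\dots,e^{2\xi_n})$ with globally defined functions $\xi_i$ subject to $\sum_i\xi_i=0$, substitute into the Hitchin equation for $h_{max}$, and subtract the Hitchin equation already satisfied by $h_X$. The $\hat g_X$-powers and the constants $\gamma_i$ cancel, so the curvature terms reduce to $\Delta_{g_X}\xi_i$ while $[\theta,\theta^{*}]$ contributes tridiagonal exponential interactions of Toda type in the differences $\xi_i-\xi_{i\pm1}$. Thus the $\xi_i$ solve a coupled semilinear elliptic system on the complete surface $(X,g_X)$ whose structure is governed by the (positive definite, tridiagonal) $A_{n-1}$ Cartan-type interaction matrix, and $h_X=h_{max}$ is equivalent to $\xi_i\equiv 0$.

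The heart of the proof is forcing $\xi_i\equiv 0$. Maximality of $h_{max}$ supplies one inequality for free; for the reverse I would invoke the coupled generalization of Kalka–Yang's theorem, the technical core announced in the abstract, together with the completeness of $g_X$. Concretely, I expect to exploit the positive definiteness of the Cartan-type matrix to pass to suitable positive combinations $w=\sum_i a_i\xi_i$ satisfying scalar differential inequalities, and then apply an Ahlfors–Schwarz / Omori–Yau maximum principle at infinity to conclude $w\le 0$, hence $\xi_i\equiv 0$ after using $\sum_i\xi_i=0$. The main obstacle is precisely running this maximum principle for the coupled, non-decoupled system over a non-compact complete surface: one must control all components $\xi_i$ simultaneously and their behavior at the ends, where completeness of $g_X$—rather than compactness—is what provides the necessary barriers. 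This coupling together with the asymptotic analysis, rather than any single estimate, is the step I expect to be genuinely hard, and it is exactly where the paper's generalized Kalka–Yang machinery is required.
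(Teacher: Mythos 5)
Your reduction is sound and matches the paper's architecture: $h_X$ is harmonic of unit determinant (this is exactly the computation of Section \ref{subsection:Fuchsian}, where the diagonal Hitchin equation becomes the Toda system \eqref{eqn4} with $k_i=i(n-i)$ and $u\equiv 0$ is a solution), Theorem \ref{main theoremLater} produces a diagonal maximal metric $h_{max}$ of the same determinant, and the whole statement collapses to the single identity $h_X=h_{max}$, i.e.\ to the claim that $(0,\dots,0)$ is the \emph{maximal} solution of \eqref{eqn4}. That is precisely Proposition \ref{prop:main2} in the paper.

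The gap is that you do not prove this last claim; you only announce a strategy ("positive combinations $w=\sum_i a_i\xi_i$ plus Omori--Yau") and explicitly defer the hard part. Two concrete problems with the deferred step as sketched. First, Omori--Yau requires $\sup_X\xi_i<\infty$ a priori; nothing in your argument supplies an upper bound on the components of the maximal solution before you have proved it is $\le 0$ (one could extract such a bound from the estimate $\|\theta\|^2_{h,g_X}\le n(n^2-1)/12$ of \cite{LiNilpotent} together with positivity of the inverse Cartan matrix, but you would have to do this). Second, a linear combination of the equations does not produce a usable scalar inequality, because the nonlinearity $e^{(C\xi)_i}$ does not combine linearly; making this rigorous needs a convexity or max-over-components argument that is not routine. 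The paper avoids all of this: in the proof of Proposition \ref{prop:main2} it lifts to $\mathbb D$, and for each $r<1$ writes down the \emph{explicit} solution $\tilde u_i^r=\tfrac{i(n-i)}{2}\log\tfrac{r^2(1-|z|^2)^2}{(r^2-|z|^2)^2}$ on $D_r$ coming from the complete hyperbolic metric of $D_r$, which blows up on $\partial D_r$; the comparison principle (Lemma \ref{lem:comparisonprincipal}) for the coupled system then gives $u_i\le \tilde u_i^r$ on $D_r$, and letting $r\to 1$ yields $u_i\le 0$. This exhaustion-by-barriers argument is where the actual content lies, and it is the piece your proposal leaves unproven; note also that the generalized Kalka--Yang theorem (Proposition \ref{prop:main1}) only gives \emph{existence} of the maximal solution and does not by itself identify it with $0$.
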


Here, "generically regular nilpotent" means that \(\theta^n = 0\) and \(\theta^{n-1} \neq 0\). The graded bundle \(\text{Gr}_{\mathcal{G}}(E, \theta)\) is constructed as follows. Let \(G_i\) be the subbundle generated by the subsheaf \(\ker \theta^{n-i}\). Then
\[
\mathcal{G} = \{ E = G_0 \supset G_1 \supset \cdots \supset G_n = 0 \}
\]
is a full decreasing holomorphic filtration of \(E\), and \(\theta\) maps \(G_{i-1}\) to \(G_i \otimes K_X\). The filtration $\mathcal G$ is called the \textit{canonical filtration} of $(E,\theta).$

The graded Higgs bundle \(\text{Gr}_{\mathcal{G}}(E, \theta)\) is constructed as:
\[
(E^0 = \bigoplus_{i=1}^n G_{i-1}/G_i, \quad \theta^0 = \sum_{i=1}^n \text{Gr}_i \theta),
\]
where for \(i = 1, \ldots, n-1\),
\[
\text{Gr}_i \theta : G_{i-1}/G_i \to G_i/G_{i+1} \otimes K_X
\]
is induced from \(\theta\), and \(\text{Gr}_n \theta : G_{n-1} \to 0\). The graded Higgs bundle \(\text{Gr}_{\mathcal{G}}(E, \theta)\) is a CVHS of type $(1,\cdots,1)$ and can be viewed as the limit as \(t \to \infty\) of the \(\mathbb{C}^*\)-flow.

We define a partial order on Hermitian metrics as follows:

\begin{definition}
    Let \((E, \theta)\) be a generically regular nilpotent Higgs bundle with the canonical filtration \(\mathcal G=\{G_k\}\). Two Hermitian metrics \(h_1\) and \(h_2\) on \(E\) are said to have \(h_1\) weakly dominating \(h_2\) if \(\det(h_1|_{G_k}) \geq \det(h_2|_{G_k})\) for \(k = 0, \ldots, n-1\).
\end{definition}

A harmonic metric is maximal if it weakly dominates all other harmonic metrics. While a maximal harmonic metric may not always exist, if it does, it is unique.

The proof of Theorem \ref{intro:main1} proceeds in two steps. First, we show that a solution to the Hitchin equation on a generically regular Higgs bundle induces a supersolution to the equation on \(\det G_k\)'s induced from Hitchin equation, as detailed in Proposition \ref{prop 1}. Next, we show that if a supersolution to the equation on \(\det G_k\)'s exists, then a maximal solution to the equation on \(\det G_k\)'s also exists. This maximal solution then provides a maximal harmonic metric on \(\text{Gr}_{\mathcal{G}}(E, \theta)\). We finish the second step by showing a more general result in partial differential equations, Proposition \ref{prop:introductionmain1} below.

\begin{prop}\label{prop:introductionmain1}
    (Proposition \ref{prop:main1})
    Let \((X, g_X)\) be a non-compact hyperbolic Riemann surface. Let \(k_i\), \(i = 1, \ldots, n-1\), be essentially positive smooth functions on \(X\). Consider the system
    \begin{equation}\label{intro:system}
    \Delta_{g_X} u_i + i(n-i) - k_i e^{2u_i - u_{i-1} - u_{i+1}} = 0, \quad 1 \leq i \leq n-1, \quad u_0 = u_n = 0.
    \end{equation}
    If this system has a subsolution, then it has a maximal solution.
\end{prop}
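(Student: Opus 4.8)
The plan is to realize the maximal solution as a locally decreasing limit of ``boundary blow-up'' solutions on an exhaustion of $X$ by relatively compact smooth domains, so the heart of the matter is a \emph{domain-independent interior upper bound} for solutions that lie above a fixed subsolution. First I would record the two structural features of \eqref{intro:system}. Writing $f_i(u)=i(n-i)-k_ie^{2u_i-u_{i-1}-u_{i+1}}$, one has $\partial f_i/\partial u_i<0$ while $\partial f_i/\partial u_{i\pm1}\ge 0$, so the system is \emph{cooperative} (quasimonotone nondecreasing). This yields a comparison principle on bounded domains: if $\bar u$ is a supersolution and $\underline u$ a subsolution with $\bar u\ge\underline u$ on the boundary, then $\bar u\ge\underline u$ inside, as one sees by looking at the index and point where $\min_i\min(\bar u_i-\underline u_i)$ is attained and checking that the corresponding exponent difference is $\le 0$ there. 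Consequently solutions are ordered by their boundary data, and the notion of a maximal solution is well posed and unique.

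The crucial estimate comes from the $A_{n-1}$ Toda structure of the exponents. The combination $2u_i-u_{i-1}-u_{i+1}$ equals $(\mathsf A u)_i$ for the Cartan matrix $\mathsf A$ ($2$ on the diagonal, $-1$ on the off-diagonals, with the convention $u_0=u_n=0$), which is symmetric and positive definite. I would use its positive eigenvector $c_i=\sin(i\pi/n)>0$, with $\mathsf A c=\lambda c$ and $\lambda=2-2\cos(\pi/n)>0$, and set $w=\sum_{i=1}^{n-1}c_iu_i$ and $C_0=\sum_i c_i$. Then $\sum_i c_i(\mathsf A u)_i=(c,\mathsf A u)=(\mathsf A c,u)=\lambda w$, and Jensen's inequality applied with the weights $c_i/C_0$ gives, on any fixed compact set where $\min_i k_i\ge\delta>0$,
\[
\Delta_{g_X}w=-\sum_i c_i\,i(n-i)+\sum_i c_ik_ie^{(\mathsf A u)_i}\ \ge\ -C_1+\delta C_0\,e^{\frac{\lambda}{C_0}w}.
\]
Since the nonlinearity is exponential, this is a scalar Keller--Osserman inequality, so $w$ obeys an interior upper bound depending only on the distance to the boundary and the local constants, \emph{independent of the boundary data}. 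As every solution we construct satisfies $u_i\ge\underline u_i$, it follows that $u_i\le c_i^{-1}\big(w-\sum_{j\ne i}c_j\underline u_j\big)$ is bounded above on compact sets. This reduction of the coupled a priori estimate to a single scalar Kalka--Yang/Keller--Osserman bound is the main point.

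With these in hand I would run the construction. Fix an exhaustion $\Omega_1\Subset\Omega_2\Subset\cdots$ with $\bigcup_j\Omega_j=X$ and smooth boundaries. On $\Omega_j$, for a large constant $M$, the given $\underline u$ together with the supersolution $\bar u_i=M+a$, where $a$ solves $\Delta_{g_X}a=-\max_i i(n-i)$ with $a|_{\partial\Omega_j}=0$ (so $a\ge0$ and the common exponent vanishes, making $\bar u$ a supersolution even at zeros of $k_i$), form an ordered pair, so the standard monotone iteration for cooperative systems produces a solution $u^{(j,M)}$ with boundary data $M$ and $\underline u\le u^{(j,M)}\le M+a$. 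Letting $M\to\infty$, the $u^{(j,M)}$ increase by comparison but stay below the Keller--Osserman bound on compact subsets of $\Omega_j$, hence converge in $C^\infty_{\mathrm{loc}}$ to a solution $U_j\ge\underline u$ on $\Omega_j$ that dominates every solution with finite boundary values. Comparing across domains gives $U_{j+1}\le U_j$ on $\Omega_j$, so the family decreases yet stays above $\underline u$, and its limit $U=\lim_jU_j\ge\underline u$ is, by elliptic estimates, a solution on all of $X$. Any solution $v$ on $X$ has finite boundary values on each $\partial\Omega_j$, whence $v\le U_j$ and thus $v\le U$; so $U$ is the maximal solution, unique by antisymmetry of the order. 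The subsolution hypothesis enters precisely here: it keeps the decreasing family $(U_j)$ from collapsing to $-\infty$, guaranteeing the maximal solution is finite.

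I expect the genuine obstacle to be the interior upper estimate at the (discrete) zeros of the $k_i$, where ``essentially positive'' degenerates and the constant $\delta$ above fails to be positive, so the Keller--Osserman inequality is lost pointwise; there one must argue locally---e.g.\ on the complement of the zero set combined with a removable-singularity or local-barrier argument---to propagate the bound across those points. Once this estimate is secured, the cooperative comparison principle, the monotone-iteration existence on bounded domains, and the two limits are routine.
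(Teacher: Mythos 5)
Your overall architecture coincides with the paper's: a comparison principle for the cooperative system, boundary blow-up solutions on bounded domains obtained by letting the Dirichlet data $M\to\infty$, a decreasing limit over an exhaustion, and the subsolution as the floor preventing collapse. Your reduction of the coupled estimate to a scalar Keller--Osserman inequality via the Perron eigenvector $c_i=\sin(i\pi/n)$ of the $A_{n-1}$ Cartan matrix is correct where it applies and is a genuinely different (and elegant) route to the interior bound; the paper instead compares componentwise with the explicit Toda solution built from the complete metric of curvature $-\delta$ on small hyperbolic disks.

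However, there is a genuine gap exactly where you suspect one, and your proposed fix does not work. Under the stated hypothesis the $k_i$ are only \emph{essentially positive}: nonnegative, and positive on the boundaries $\partial\Omega_j$ of some exhaustion. Their zero sets need not be discrete --- they may contain open sets --- so a ``removable singularity'' or local-barrier patch across isolated zeros is not available, and your Keller--Osserman inequality simply gives no information on the region where $\min_i k_i$ vanishes. The correct repair, which is what the paper does, is to abandon any pointwise nonlinear estimate in the deep interior and argue in two steps: (i) establish the bound for $v^{(j,M)}$ only in a collar $\Omega_j\setminus\Omega_{j,\epsilon}$ near $\partial\Omega_j$, where essential positivity guarantees $k_i\ge i(n-i)\delta>0$ (here either your eigenvector trick or the paper's explicit comparison solutions works); then (ii) propagate the bound inward by applying the \emph{linear} maximum principle to the difference $v^{(j,M)}-v^{(j,M_0)}$ for a fixed $M_0$. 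This difference satisfies a cooperative, row-diagonally-dominant linear system (with coefficients $c_i=k_i\int_0^1 e^{\cdots}\,dt\ge 0$, no positivity of $k_i$ needed), so its nonnegative maximum over $\Omega_{j,\epsilon/4}$ is attained on $\partial\Omega_{j,\epsilon/4}$, which lies in the collar already controlled by step (i). Without step (ii) your family $u^{(j,M)}$ could a priori blow up as $M\to\infty$ at interior points where the $k_i$ vanish, and the construction of $U_j$ fails. Once this propagation step is inserted, the rest of your argument (monotone iteration, the two limits, maximality, uniqueness) goes through as in the paper.
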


When \(n = 2\), Equation \eqref{intro:system} simplifies to
\begin{equation}\label{eqn:introsingle}
    \Delta_{g_X} u + 1 - k e^{2u} = 0.
\end{equation}
This is the prescribed curvature equation, which seeks a function \(u\) such that the metric \(e^{2u} \cdot g_X\) has curvature \(-k\). In the special case where \(k \equiv 1\), the maximal solution is \(u = 0\), which represents the complete hyperbolic metric \(g_X\). Here, "essentially positive functions" are defined in Definition \ref{essentially positive}. Proposition \ref{prop:introductionmain1} extends the following theorem by Kalka and Yang for a single scalar equation. 

\begin{theorem}(Kalka-Yang \cite[Theorem 4]{KalkaYang})
Let \(k\) be an essentially positive smooth function on \(X\). If \eqref{eqn:introsingle} has a subsolution, then it has a maximal solution.
\end{theorem}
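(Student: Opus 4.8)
The plan is to prove the theorem by exhausting \(X\) by relatively compact domains and applying the method of sub- and supersolutions, pushing the Dirichlet data to \(+\infty\) so that the resulting solutions become as large as possible. Write \eqref{eqn:introsingle} as \(\Delta_{g_X} u = F(x,u)\) with \(F(x,u) = k(x)e^{2u}-1\). Since \(k \geq 0\), the nonlinearity \(F\) is nondecreasing in \(u\), so the comparison principle holds: an inequality between a subsolution and a supersolution on a bounded domain propagates from the boundary to the interior. Fix an exhaustion \(\Omega_1 \Subset \Omega_2 \Subset \cdots\) by relatively compact domains with smooth boundary and \(\bigcup_j \Omega_j = X\), and denote by \(u_-\) the given subsolution.

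First I would solve, on each \(\Omega_j\) and for each constant \(M \geq \sup_{\partial\Omega_j} u_-\), the Dirichlet problem
\[
\Delta_{g_X} u + 1 - k e^{2u} = 0 \text{ in } \Omega_j, \qquad u = M \text{ on } \partial\Omega_j.
\]
Here \(u_-\) is a subsolution with boundary value \(\leq M\), while the solution \(w\) of the linear problem \(\Delta_{g_X} w + 1 = 0\) in \(\Omega_j\), \(w = M\) on \(\partial\Omega_j\), is a supersolution since \(\Delta_{g_X} w + 1 - k e^{2w} = -k e^{2w} \leq 0\), and comparison gives \(u_- \leq w\). The sub-/supersolution method then yields a unique solution \(u_{j,M}\) with \(u_- \leq u_{j,M} \leq w\), and comparison shows \(u_{j,M}\) is nondecreasing in \(M\).

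The crucial point is a locally uniform upper bound for \(u_{j,M}\), independent of \(M\) and \(j\). On any ball \(B \Subset X\) on which \(k \geq k_0 > 0\), a radial Keller--Osserman barrier \(\Phi_B\) — a supersolution of \(\Delta_{g_X}\Phi = k_0 e^{2\Phi}-1\) on \(B\) that blows up at \(\partial B\) — exists because \(s \mapsto e^{2s}\) satisfies the Keller--Osserman growth condition, and comparison on \(B\) gives \(u_{j,M} \leq \Phi_B\) there. The essential positivity of \(k\) (Definition \ref{essentially positive}) is precisely what allows these local barriers, together with the interior elliptic estimates for \eqref{eqn:introsingle}, to be patched into a locally uniform bound \(u_{j,M} \leq C_{\mathrm{loc}}\). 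With this in hand, letting \(M \to \infty\) produces an increasing limit \(U_j := \lim_{M\to\infty} u_{j,M}\) which, by local \(C^{2,\alpha}\) estimates, solves \eqref{eqn:introsingle} in \(\Omega_j\) and satisfies \(U_j \geq u_-\). I would then show \(U_j\) is nonincreasing in \(j\): for \(\Omega_j \subset \Omega_{j+1}\), the restriction \(U_{j+1}|_{\Omega_j}\) is finite on \(\partial\Omega_j\), so choosing \(M \geq \sup_{\partial\Omega_j} U_{j+1}\) and comparing with \(u_{j,M}\) gives \(U_{j+1} \leq u_{j,M} \leq U_j\) on \(\Omega_j\). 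Hence \(u := \lim_j U_j \geq u_-\) is well defined and, by the interior estimates, a smooth solution on all of \(X\). Maximality follows since any solution \(\tilde u\) is finite on each \(\partial\Omega_j\), whence \(\tilde u \leq u_{j,M} \leq U_j\) for \(M\) large, and \(j \to \infty\) gives \(\tilde u \leq u\).

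The main obstacle is the interior upper bound. Where \(k\) is bounded below by a positive constant the exponential nonlinearity furnishes Keller--Osserman barriers immediately, but at points of the zero set \(\{k=0\}\) no such local barrier exists, and one must exploit the essentially positive structure of \(k\) — that \(\{k>0\}\) is large enough — to propagate the bound from nearby balls via the Harnack and gradient estimates for the equation. Controlling the large solutions near \(\{k=0\}\) is the technical heart of the argument; everything else is the standard exhaustion-plus-comparison scheme.
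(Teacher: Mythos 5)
Your overall scheme --- exhaustion, Dirichlet problems with boundary data $M\to\infty$, monotonicity via comparison, and a diagonal limit --- is exactly the Kalka--Yang strategy that the paper follows (Lemma \ref{step1}, Lemma \ref{step2}, Proposition \ref{prop:main1}). The genuine gap is in the step you yourself flag as the technical heart: the locally uniform upper bound on $u_{j,M}$. You misread what essential positivity provides. Definition \ref{essentially positive} only guarantees $k>0$ on the \emph{boundaries} $\partial\Omega_j$ of some exhaustion; it says nothing about $\{k>0\}$ being ``large'' in the interior, and $k$ may well vanish on open subsets of $\Omega_j$. On such a set the equation degenerates to $\Delta_{g_X}u+1=0$, no Keller--Osserman barrier exists, and a Harnack inequality does not apply to $u$ (it is neither positive nor a solution of a homogeneous equation), so ``patching via Harnack and gradient estimates'' does not close the argument as written.

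The paper's resolution is simpler and you should adopt it. Since $\partial\Omega_j$ is compact and $k>0$ there, one has $k\geq\delta>0$ on a collar $\Omega_j\setminus\Omega_{j,\epsilon}$; on that collar your barrier argument works (the paper uses the complete metric of constant curvature $-\delta$ on small hyperbolic disks contained in the collar, which is precisely a blow-up supersolution), and this bounds $u_{j,M}$ independently of $M$ there, in particular on $\partial\Omega_{j,\epsilon/4}$. For the interior $\Omega_{j,\epsilon/4}$, fix one solution $u_{j,N_0}$ and note that $v=u_{j,M}-u_{j,N_0}$ satisfies the linear equation $\Delta_{g_X}v-cv=0$ with $c=k\int_0^1 e^{2(tu_{j,M}+(1-t)u_{j,N_0})}\,dt\geq 0$; by the maximum principle (Lemma \ref{maximum principle}) its nonnegative maximum is attained on $\partial\Omega_{j,\epsilon/4}$, where it is already bounded independently of $M$. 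This gives the interior bound with no hypothesis at all on the zero set of $k$. The remainder of your argument (monotonicity in $j$, the limit, and maximality --- which should be checked against arbitrary subsolutions rather than only solutions, though the identical comparison handles this) is sound, and your direct maximality argument in fact bypasses the refinement in Lemma \ref{step2}.
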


Theorem \ref{intro:main1} provides a necessary condition for the existence of harmonic metrics on generically regular nilpotent Higgs bundles: the existence of harmonic metric on its graded Higgs bundle, a CVHS of type $(1,\cdots,1)$.For the existence of a harmonic metric on a CVHS of type $(1,\cdots,1)$ parametrized by $(\gamma_1,\cdots, \gamma_{n-1})$, we have the following facts in Section \ref{sec:non-existence}:
\begin{itemize}
\item The existence property is preserved under holomorphic covering map. (Lemma \ref{lem:ExistenceLift}). 
\item Over $\mathbb D$, if $\gamma_i$'s are bounded, then a harmonic metric exists. (Lemma \ref{lem:ExistenceCVHS}).
\item Harmonic metrics may not exist on some $(E,\theta)$.(Proposition \ref{prop:NonExistence}).
\end{itemize}

Additionally, Theorem \ref{intro:main1} has a geometric application. For \(n = 2\), it is related to branched minimal surfaces in \(\mathbb{H}^3\). The branched points of these surfaces correspond to the critical points of a holomorphic map from \(\widetilde{X}\) to \(\mathbb{D}\), where \(\widetilde{X}\) is the universal cover of \(X\).

\begin{theorem}[Theorem \ref{branched minimal}]
    Let \(f: \widetilde{X} \to \mathbb{H}^3\) be an equivariant branched minimal disk which does not lie in a totally geodesic copy of $\mathbb D$. Then it induces an equivariant holomorphic map \(f^0: \widetilde{X} \to \mathbb{D}\) satisfying \[0<\frac{g_f}{g_{f_0}}<1.\] Furthermore, the branched points of \(f\) coincide with the critical points of \(f^0\).
\end{theorem}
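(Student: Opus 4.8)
The plan is to route the geometric statement through the non-abelian Hodge correspondence and then apply Theorem \ref{main theoremLater}.

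\emph{Step 1 (dictionary).} An equivariant branched minimal immersion $f:\widetilde X\to\mathbb H^3$ is an equivariant conformal harmonic map into $\mathbb H^3=SL(2,\mathbb C)/SU(2)$. Conformality is equivalent to the vanishing of the Hopf differential $\mathrm{tr}(\theta^2)$ of the associated rank-two Higgs field, so the corresponding $SL(2,\mathbb C)$-Higgs bundle $(E,\theta)$ is nilpotent, and the harmonic metric $h_0$ of $f$ is exactly a harmonic metric on $(E,\theta)$. The branch locus of $f$, where $df$ degenerates, coincides with the zero divisor of $\theta$. The hypothesis that $f$ does not lie in a totally geodesic $\mathbb D\subset\mathbb H^3$ excludes $\theta\equiv0$ and guarantees that $(E,\theta)$ is generically regular nilpotent; moreover it forces the second fundamental form to be nontrivial, which I will use below for strictness.

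\emph{Step 2 (the map $f^0$).} By Theorem \ref{main theoremLater}, the existence of $h_0$ implies that the graded bundle $\mathrm{Gr}_{\mathcal G}(E,\theta)$, a CVHS of type $(1,1)$, admits a unique maximal harmonic metric $h_{max}$, diagonal with $\det h_{max}=\det h_0$. I would read $\mathrm{Gr}_{\mathcal G}(E,\theta)$ together with $h_{max}$ as a polarized variation of Hodge structure of weight one and type $(1,1)$, whose period domain is $\mathbb D$. Its period map is an equivariant holomorphic map $f^0:\widetilde X\to\mathbb D$; holomorphicity is Griffiths transversality, automatic in this type. By construction the derivative of $f^0$ is the Kodaira--Spencer map of the VHS, namely the graded Higgs field $\mathrm{Gr}\,\theta$ induced from $\theta$; hence the critical points of $f^0$ are exactly the zeros of $\theta$, i.e. the branch points of $f$. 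The pulled-back metric $g_{f^0}=(f^0)^*g_{\mathbb D}$ is the Hodge metric and has constant curvature $-1$ away from the critical points.

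\emph{Step 3 (the two-sided bound).} Write $G_1=\ker\theta$ for the canonical sub-line-bundle. Both metrics are conformal to $g_X$: the induced metric is a fixed multiple of $|\theta|^2_{h_0}\,g_X$ and the Hodge metric a fixed multiple of $|\mathrm{Gr}\,\theta|^2_{h_{max}}\,g_X$. Since $\det h_0=\det h_{max}$ and both norms are built from the same holomorphic datum along the canonical filtration, the holomorphic factor cancels in the ratio and the computation reduces to
\[
\frac{g_f}{g_{f^0}}=\left(\frac{h_0|_{G_1}}{h_{max}|_{G_1}}\right)^{2},
\]
a ratio of smooth positive metrics on the line bundle $G_1$. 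This is strictly positive on all of $\widetilde X$, including the branch points, giving the lower bound $g_f/g_{f^0}>0$. For the upper bound, by Proposition \ref{prop 1} the metric $h_0$ induces a supersolution and $h_{max}$ is the corresponding maximal solution, so $h_{max}$ weakly dominates $h_0$; thus $h_{max}|_{G_1}\ge h_0|_{G_1}$ and $g_f/g_{f^0}\le1$ everywhere. Since $f$ is not totally geodesic, $h_0\neq h_{max}$, and the strong maximum principle applied to the two elliptic (harmonic) metrics upgrades this to the strict pointwise bound $g_f/g_{f^0}<1$.

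\emph{Main obstacle.} The delicate points are the global construction in Step 2 and the strictness input from Step 1. One must verify that the Hodge datum of $h_{max}$ genuinely integrates to a single-valued equivariant holomorphic $f^0$ on all of $\widetilde X$, and that its critical divisor, counted with multiplicity, equals the zero divisor of $\theta$ --- so that branch points of $f$ and critical points of $f^0$ match exactly rather than merely set-theoretically. Equally important is translating ``$f$ not totally geodesic'' into $h_0\neq h_{max}$ (equivalently, nonvanishing second fundamental form), which is precisely what feeds the strong maximum principle; this requires knowing that a totally geodesic disk is exactly the case in which $h_0$ itself is the maximal diagonal harmonic metric, so that the weak domination of Theorem \ref{main theoremLater} is strict in all other cases.
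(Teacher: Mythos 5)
Your proposal is correct and follows essentially the same route as the paper: translate the branched minimal disk into a nilpotent $SL(2,\mathbb{C})$-Higgs bundle with harmonic metric, apply Theorem \ref{main theoremLater} to get the maximal diagonal harmonic metric on the graded CVHS of type $(1,1)$ (hence an equivariant map to $\mathbb{D}$), and compare $g_f=|\gamma|^2(h|_L)^{-2}$ with $g_{f^0}=|\gamma|^2(h_{max}|_L)^{-2}$, with strictness coming from the ``strictly weakly dominates unless totally geodesic'' dichotomy. The only difference is cosmetic: you obtain holomorphicity of $f^0$ via the period map and Griffiths transversality, whereas the paper notes that $f^0$ is a conformal harmonic map into a surface and invokes Schoen--Yau to conclude it is holomorphic or anti-holomorphic.
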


Kraus \cite{Kraus} has characterized the set of critical points of a holomorphic function from \(\mathbb{D}\) to itself. For further details, see Section \ref{n=2}.
\vspace{0.2cm}

\textbf{Organization:} In Section \ref{generically regular}, we provide some basic results on generically regular nilpotent Higgs bundles. In Section \ref{harmonic metrics and main theorem}, we show that a harmonic metric on $(E,\theta)$ induces a supersolution to the equation on $\det G_k$'s. In Section \ref{Kalka-Yang}, we prove Proposition \ref{prop:main1} as a generalization of Kalka-Yang's theorem. In Section \ref{sec:ProofofMainTheorem}, we prove the main theorem: Theorem \ref{main theoremLater}. In Section \ref{sec:non-existence}, we discuss the existence of harmonic metrics on a CVHS of type $(1,\cdots,1)$. In Section \ref{further discussion}, we further consider the related topics on $\mathbb{C}^*$-flow and minimal branched disk in $\mathbb{H}^3.$

\vspace{0.2cm}
\textbf{Acknowledgement:} Both authors thank the hospitality of Max Planck Institute at Leipzig, where part of the work was completed. S. Dai is supported by NSF of China No.12071338. Q. Li is partially supported by the National Key R\&D Program of China No. 2022YFA1006600, the Fundamental Research
Funds for the Central Universities, and Nankai Zhide foundation. Q. Li is also sponsored by the Alexander von Humboldt Foundation.


\section{Preliminaries on generically regular nilpotent Higgs bundles}\label{generically regular}
Let \(X\) be a non-compact hyperbolic Riemann surface with canonical line bundle \(K_X\), and let \(g_X\) be the complete hyperbolic Kähler metric on \(X\). Consider a Higgs bundle \((E, \theta)\) over \(X\) of rank \(n\).

\begin{definition}
A Higgs bundle \((E, \theta)\) is called generically regular nilpotent if \(\theta^n = 0\) and \(\theta^{n-1} \neq 0\).
\end{definition}

\begin{definition}\label{complex variation of Hodge structure}
A Higgs bundle \((E, \theta)\) is called a complex variation of Hodge structure of type \((1, \cdots, 1)\) if it can be expressed as
\[ E = L_1 \oplus L_2 \oplus \cdots \oplus L_n, \]
where \(L_i\) are holomorphic line bundles for \(i = 1, \cdots, n\), and the Higgs field \(\theta\) is given by
\[ \gamma_i = \theta|_{L_i}: L_i \to L_{i+1} \otimes K_X \text{ for } i = 1, \cdots, n-1, \]
where each \(\gamma_i\) is a non-zero holomorphic bundle map, and \(\gamma_n = \theta|_{L_n}: L_n \to 0\). We write \(\theta = \sum_{i=1}^n \gamma_i\).
\end{definition}

\begin{remark}
A complex variation of Hodge structure of type \((1, \cdots, 1)\) is generically regular nilpotent.
\end{remark}

For a generically regular nilpotent Higgs bundle \((E, \theta)\), there is a canonical full decreasing holomorphic filtration \(\mathcal{G}\). Let \(G_i\) be the subbundle given by the subsheaf \(\ker \theta^{n-i}\). Then
\[ \mathcal{G} = \{ E = G_0 \supset G_1 \supset \cdots \supset G_n = 0 \} \]
is a full decreasing holomorphic filtration of \(E\), and \(\theta\) maps \(G_{i-1}\) to \(G_i \otimes K_X\).

Given \(\mathcal{G}\), we can construct the graded Higgs bundle \(\text{Gr}_{\mathcal{G}}(E, \theta)\) as follows:
\[ (E^0 = \bigoplus_{i=1}^n G_{i-1}/G_i, \quad \theta^0 = \sum_{i=1}^n \text{Gr}_i \theta), \]
where for \(i = 1, \cdots, n-1\),
\[ \text{Gr}_i \theta: G_{i-1}/G_i \to G_i/G_{i+1} \otimes K_X \]
is induced from \(\theta\), and \(\text{Gr}_n \theta: G_{n-1} \to 0\). Since \((E, \theta)\) is generically regular, \(\text{Gr}_i \theta\) is not identically zero for \(i = 1, \cdots, n-1\). Thus, \(\text{Gr}_{\mathcal{G}}(E, \theta)\) is a complex variation of Hodge structure of type \((1, \cdots, 1)\). Setting \(L_i = G_{i-1}/G_i\) and \(\gamma_i = \text{Gr}_i \theta\), the graded Higgs bundle \(\text{Gr}_{\mathcal{G}}(E, \theta)\) can be expressed as
\begin{equation}\label{graded bundle}
(E^0, \bar{\partial}_{E^0}) = (L_1, \bar{\partial}_1) \oplus \cdots \oplus (L_n, \bar{\partial}_n), \quad
\theta^0 = \begin{pmatrix}
0 & & & \\
\gamma_1 & 0 & & \\
& \ddots & \ddots & \\
& & \gamma_{n-1} & 0
\end{pmatrix}.
\end{equation}

If \((E, \theta)\) is a complex variation of Hodge structure of type \((1, \cdots, 1)\), then \(\mathcal{G}\) is its canonical decreasing filtration:
\[ G_i = \bigoplus_{k \geq i+1} L_k. \]
Thus, \(G_{i-1}/G_i \cong L_i\) canonically and
\[ \text{Gr}_{\mathcal{G}}(E, \theta) \cong (E, \theta). \]

For a generically regular nilpotent Higgs bundle \((E, \theta)\) with filtration \(\mathcal{G} = \{ G_i \}\), the graded Higgs bundle \(\text{Gr}_{\mathcal{G}}(E, \theta) = (E^0, \theta^0)\) has a canonical filtration \(\mathcal{G}^0 = \{ G_i^0 \}\), where \(G_i^0 = \bigoplus_{k \geq i+1} G_{k-1}/G_k\). Although \(G_i\) and \(G_i^0\) are generally not isomorphic, the determinant line bundles \(\det(G_i)\) and \(\det(G_i^0)\) are canonically isomorphic for \(i = 1, \cdots, n-1\).

We now consider Higgs bundles with Hermitian metrics.

\subsection{Smooth decomposition of $E$}\label{subsection:smoothdecom}
Let \((E, \theta)\) be a generically regular nilpotent Higgs bundle with \(G_i = \ker \theta^{n-i}\) for \(0 \leq i \leq n\). Let \(h\) be a Hermitian metric on \(E\). Denote \(L_i\) as the \(h\)-orthogonal complement of \(G_i\) in \(G_{i-1}\) for \(1 \leq i \leq n\). Using \(h\), \(L_i \cong G_{i-1}/G_i\) canonically, and thus we have a smooth decomposition
\begin{equation}
E=L_1 \oplus L_2 \oplus \cdots \oplus L_n. 
\end{equation}
With respect to this decomposition, we have the following:

I. The Hermitian metric \(h\) is given by
\begin{equation}\label{Metric}
h = \begin{pmatrix}
h_1 & & & \\
& h_2 & & \\
& & \ddots & \\
& & & h_n
\end{pmatrix},
\end{equation}
where \(h_i\) is the restriction of \(h\) to \(L_i\);

II. The holomorphic structure on \(E\) is defined by the \(\bar{\partial}\)-operator
\begin{eqnarray}\label{cpx str}
\bar{\partial}_E = \begin{pmatrix}
\bar{\partial}_1 & 0 & 0 & \cdots & 0 \\
\beta_{21} & \bar{\partial}_2 & 0 & \cdots & 0 \\
\beta_{31} & \beta_{32} & \bar{\partial}_3 & \cdots & 0 \\
\vdots & \vdots & & \ddots & \vdots \\
\beta_{n1} & \beta_{n2} & \cdots & \beta_{n, n-1} & \bar{\partial}_n
\end{pmatrix},
\end{eqnarray}
where \(\bar{\partial}_k\) is the \(\bar{\partial}\)-operator defining the holomorphic structure on \(G_{k-1}/G_k\), and \(\beta_{ij} \in \Omega^{0,1}(X, \text{Hom}(L_j, L_i))\);

III. The Higgs field is of the form
\begin{eqnarray}\label{Higgs field}
\theta = \begin{pmatrix}
0 & 0 & 0 & \cdots & 0 \\
\gamma_1 & 0 & 0 & \cdots & 0 \\
a_{31} & \gamma_2 & 0 & \cdots & 0 \\
\vdots & \vdots & \ddots & \ddots & \vdots \\
a_{n1} & a_{n2} & \cdots & \gamma_{n-1} & 0
\end{pmatrix},
\end{eqnarray}
where \(\gamma_k: L_k \to L_{k+1} \otimes K_X\) is the holomorphic map \(\text{Gr}_k \theta: G_{k-1}/G_k \to G_k/G_{k+1} \otimes K_X\), and \(a_{ij} \in \Omega^{1,0}(X, \text{Hom}(L_j, L_i))\).

Note that in the expressions above, \(\bar{\partial}_k\) and \(\gamma_k\) are determined by the Higgs bundle \((E, \theta)\) and are independent of the Hermitian metric \(h\). However, \(a_{ij}\) and \(\beta_{ij}\) depend on the Hermitian metric \(h\). The graded Higgs bundle \((E^0, \theta^0)\) in (\ref{graded bundle}) with diagonal Hermitian metric \(h = (h_1, \cdots, h_n)\) is the case where all \(a_{ij}\) and \(\beta_{ij}\) vanish.

Since \(\det(G_k) \cong \det(G_k^0)\), we can consider the induced metric \(\det(h|_{G_k})\) on the determinant line bundle \(\det(G_k)\) or \(\det(G_k^0)\). In general, for a complex vector space \((V, h)\) with a Hermitian metric and a subspace \(W\) of \(V\), we have
\[ \det W \otimes \det (V/W) \cong \det V. \]
Thus, we formally define
\[ \det(h_{V/W}) = \frac{\det(h_V)}{\det(h_W)}. \]

\begin{remark}
In the formula above, \(\det(h_{V/W})\) is defined using the Hermitian metric on the determinant bundles. If we use the Hermitian metric on \(V\), \(h_{V/W}\) can be directly defined by using \(V/W \cong W^\perp\).
\end{remark}

In our case, with diagonal Hermitian metric \(h = \text{diag}(h_1, \cdots, h_n)\), we have \(\det(h|_{G_k}) = \prod_{i=k+1}^n h_i\). Thus,
\begin{equation}\label{fil-line}
h_k = \det(h_{L_k}) = \frac{\det(h|_{G_{k-1}})}{\det(h|_{G_k})}.
\end{equation}

\begin{definition}\label{weakly dominate}
Let \((E, \theta)\) be a generically regular nilpotent Higgs bundle with canonical filtration \(\{ G_k \}\). Let \(h_1\) and \(h_2\) be two Hermitian metrics on \(E\). We say that \(h_1\) weakly dominates \(h_2\) if
\[ \det(h_1|_{G_k}) \geq \det(h_2|_{G_k}) \]
for every \(k = 0, \cdots, n-1\).
\end{definition}

One may extend the definition of weakly dominate between Hermitian metrics on a nilpotent Higgs bundle $(E,\theta)$ and its graded Higgs bundle. 
\begin{definition}\label{weakly dominategen}
Let \((E, \theta)\) be a generically regular nilpotent Higgs bundle with canonical filtration \(\{ G_k \}\). Let \((E^0, \theta^0)\) be its graded Higgs bundle with canonical filtration \(\{ G_k \}\). Let \(h_1\) and \(h_2\) be two Hermitian metrics on \(E\), \(E^0\) respectively. We say that \(h_1\) weakly dominates \(h_2\) if 
\[ \det(h_1|_{G_k}) \geq \det(h_2|_{G_k^0}) \] under the canonical isomorphism $\det(G_k)\cong \det(G_k^0)$
for every \(k = 0, \cdots, n-1\).

We say that \(h_1\) strictly weakly dominates \(h_2\) if under the canonical isomorphism $\det(G_k)\cong \det(G_k^0)$
for every \(k = 0, \cdots, n-1\),
\[ \det(h_1|_{G_0}) \geq \det(h_2|_{G_0^0})\] 
and
\[ \det(h_1|_{G_k}) > \det(h_2|_{G_k^0})\quad \text{for $k=1,\cdots, n-1$.}\] 
\end{definition}

\section{Harmonic metrics and Hitchin equation}\label{harmonic metrics and main theorem}
We now turn our attention to harmonic metrics on a Higgs bundle \((E,\theta)\), which are Hermitian metrics satisfying the Hitchin equation:
\begin{equation}\label{Hit eqn}
    F(\nabla_h) + [\theta, \theta^{*_h}] = 0,
\end{equation}
where \(\nabla_h\) is the Chern connection uniquely determined by \(h\) and the holomorphic structure on \(E\), \(F(\nabla_h)\) is the curvature 2-form of \(\nabla_h\), and \(\theta^{*_h}\) is the adjoint of \(\theta\) with respect to \(h\).

Let \((E,\theta)\) be a generically regular nilpotent Higgs bundle, and let \(h\) be a harmonic metric on \((E,\theta)\). Let \(\mathcal{G}\) be its canonical filtration, and \(E = \oplus_i L_i\) be the \(h\)-orthogonal decomposition as in Section \ref{subsection:smoothdecom}. We also adopt the other notations from Section \ref{subsection:smoothdecom}. Let \(s_i\) be a local holomorphic frame for \(L_i\), where \(i = 1, \ldots, n\). Locally, we have
\begin{equation}\label{eqn:localexpression}
    h_i = h_i(z) s_i^* \otimes \bar{s}_i^*, \quad \gamma_i = \gamma_i(z) s_i^* \otimes s_{i+1} \otimes dz,
\end{equation}
where \(h_i(z)\) and \(\gamma_i(z)\) are local functions. Similarly, we obtain the local expression of \(h\) and \(\theta\) as \(h(z)\) and \(\theta(z)dz\). Then, locally,
\[F(\nabla_h) = \bar{\partial}(h(z)^{-1} \partial h(z)),\quad \theta^{*_h} = h(z)^{-1} \theta(z)^* h(z).\]

\subsubsection{Background Metric on \(X\) and \(E\)}

Let \(g_X\) denote the hyperbolic metric on \(X\), and \(\omega_X\) the corresponding Kähler form. Locally, we have
\[
g_X = g_X(z) \, |dz|^2 \quad \text{and} \quad \omega_X = \frac{\sqrt{-1}}{2} g_X(z) \, dz \wedge d\bar{z}.
\]
 So we have $|\partial/\partial_z|_{g_X}^2=\frac{1}{2}g_X(z)$ and $|dz|_{g_X}^2=2g_X(z)^{-1}.$ The induced Hermitian metric on $K_X^{-1}$ is $\hat g_X=\frac{1}{2}g_X(z)dz\otimes d\bar z.$ 
The rough Laplacian with respect to \(g_X\) is locally given by \(\Delta_{g_X} = \frac{4}{g_X(z)} \partial_z \partial_{\bar{z}}\). Let \(\Lambda_{\omega_X}\) denote the contraction with respect to \(\omega_X\).

The Gauss curvature of $g_X$ is $$k_{g_X}=-\frac{2}{g_X}\partial_z\partial_{\bar{z}}\log g_X.$$ Then $g_X$ satisfies $\partial_z\partial_{\bar{z}}\log g_X(z)=\frac{1}{2}g_X(z).$

Since \(X\) is non-compact, we can choose a Hermitian metric \(\tilde{h}_i\) on \(L_i\) such that 
\[
\sqrt{-1} F(\nabla_{\tilde{h}_i}) = \frac{n+1-2i}{2} \omega_X,
\]
and locally,
\[
\partial_z \partial_{\bar{z}} \log \tilde{h}_i(z) = -\frac{n+1-2i}{4} g_X(z).
\]
Let \(\tilde{h} = \text{diag}(\tilde{h}_1, \ldots, \tilde{h}_n)\) be a background Hermitian metric on \(E\) satisfying \(\det(h) = \det(\tilde{h})\). Then there exists an \(n\)-tuple of smooth real-valued functions on \(X\), \(w = (w_1, \ldots, w_n)\), such that \(h_i = e^{w_i} \tilde{h}_i\). Note that \(\det(h) = \det(\tilde{h})\) if and only if \(\sum_{k=1}^n w_k = 0\).

Denote by \(\|\gamma_i\|_{\tilde{h}, g_X}^2\) the squared norm of \(\gamma_i\) in terms of \(\tilde{h}\) and \(g_X\). Locally,
\begin{equation}\label{k_i}
    \|\gamma_i\|_{\tilde{h}, g_X}^2 = \frac{2|\gamma_i(z)|^2 \tilde{h}_i(z)^{-1} \tilde{h}_{i+1}(z)}{g_X(z)}.
\end{equation}
Let \(u_i = -\sum_{k=1}^i w_k\) for \(i = 1, \ldots, n-1\). Set \(u_0 = 0\) and, by assumption, \(u_n = 0\).

We now present the following proposition.

\begin{prop}\label{prop 1}
1. We have
  \begin{equation}\label{supersolution global fil eqn}
    -\sqrt{-1} \Lambda_{\omega_X} F_{\det(E/G_k)} + ||\gamma_k||_{h, g_X}^2\leq 0, \quad k = 1, \ldots, n-1.
  \end{equation} 
  Equivalently, in terms of \((u_1, \ldots, u_n)\) and the background metric \(\tilde{h}\), we have
  \begin{equation}\label{eqn3}
    \Delta_{g_X} u_i + i(n-i) - 2||\gamma_i||_{\tilde{h}, g_X}^2 e^{2u_i - u_{i-1} - u_{i+1}} \geq 0, \quad i = 1, \ldots, n-1, \quad u_0 = u_n = 0.
  \end{equation}
2. Suppose \((E,\theta)\) is a CVHS and \(h = \oplus h|_{L_i}\) is a diagonal harmonic metric. We have 
  \begin{equation}\label{global fil eqn}
    -\sqrt{-1} \Lambda_{\omega_X} F_{\det(E/G_k)}  + ||\gamma_k||_{h, g_X}^2 = 0, \quad k = 1, \ldots, n-1.
  \end{equation}
  Equivalently, in terms of \((u_1, \ldots, u_n)\) and the background metric \(\tilde{h}\), we have
  \begin{equation}\label{eqn2}
    \Delta_{g_X} u_i + i(n-i) - 2||\gamma_i||_{\tilde{h}, g_X}^2 e^{2u_i - u_{i-1} - u_{i+1}} = 0, \quad i = 1, \ldots, n-1, \quad u_0 = u_n = 0.
  \end{equation}
\end{prop}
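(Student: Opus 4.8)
The plan is to compute the curvature of the determinant line bundle $\det(E/G_k)$ directly, using the $h$-orthogonal smooth decomposition $E=\oplus_i L_i$ of Section~\ref{subsection:smoothdecom} together with the Hitchin equation, and to peel off a sign-definite remainder. I first record that $\det(E/G_k)\cong \det(E)\otimes\det(G_k)^{-1}$ as holomorphic line bundles, so that with the induced metrics
\[
\sqrt{-1}\,\Lambda_{\omega_X}F_{\det(E/G_k)}
=\sqrt{-1}\,\Lambda_{\omega_X}\bigl(\operatorname{tr}_E F(\nabla_h)-\operatorname{tr}_{G_k}F(\nabla_{h|_{G_k}})\bigr).
\]
Because $G_k$ is a holomorphic subbundle of $E$, Gauss--Codazzi expresses the curvature of $(G_k,h|_{G_k})$ as the orthogonal compression of $F(\nabla_h)$ minus a nonnegative second-fundamental-form term. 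Tracing over $G_k$ and subtracting from $\operatorname{tr}_E F(\nabla_h)$ leaves the trace of $F(\nabla_h)$ over the complementary block $Q\cong L_1\oplus\cdots\oplus L_k$, plus a term $B\ge 0$. In the smooth decomposition this $B$ is exactly $\sum_{i\le k<j}\|\beta_{ji}\|^2_{h,g_X}$, built from the off-diagonal entries of $\bar\partial_E$ in \eqref{cpx str} that cross the level $k$.

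The next step feeds the Hitchin equation $F(\nabla_h)=-[\theta,\theta^{*_h}]$ into the $Q$-block trace and performs the key cancellation. Writing $\theta$ in the strictly lower-triangular form \eqref{Higgs field}, a bookkeeping of the diagonal entries of $[\theta,\theta^{*_h}]$ shows that $\operatorname{tr}_Q[\theta,\theta^{*_h}]=\sum_{i=1}^k\bigl((\theta\theta^{*_h})_{ii}-(\theta^{*_h}\theta)_{ii}\bigr)$ telescopes: every Higgs component $\theta_{ji}$ with both indices $\le k$ contributes to $\theta\theta^{*_h}$ and to $\theta^{*_h}\theta$ with opposite signs and cancels, leaving only the components that cross the level $k$, namely $-\sum_{i\le k<j}\|\theta_{ji}\|^2_{h,g_X}$. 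Among these the distinguished term is $\gamma_k$ (the $(k+1,k)$ entry), the rest being the $a_{ji}$ of \eqref{Higgs field}. Hence $\sqrt{-1}\,\Lambda_{\omega_X}\operatorname{tr}_Q F(\nabla_h)=\|\gamma_k\|^2_{h,g_X}+\sum_{i\le k<j}\|a_{ji}\|^2_{h,g_X}$, and combining with the previous paragraph gives
\[
\sqrt{-1}\,\Lambda_{\omega_X}F_{\det(E/G_k)}
=\|\gamma_k\|^2_{h,g_X}+\sum_{i\le k<j}\bigl(\|a_{ji}\|^2_{h,g_X}+\|\beta_{ji}\|^2_{h,g_X}\bigr)\ \ge\ \|\gamma_k\|^2_{h,g_X},
\]
which is precisely \eqref{supersolution global fil eqn}. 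Part~2 then follows immediately: for a CVHS with a diagonal harmonic metric all $a_{ji}$ and $\beta_{ji}$ vanish, so the inequality collapses to the equality \eqref{global fil eqn}.

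Finally I translate into the analytic form. Writing $h_i=e^{w_i}\tilde h_i$ with $\sum_k w_k=0$ and $u_k=-\sum_{i\le k}w_i$, the chosen background satisfies $\sqrt{-1}\,\Lambda_{\omega_X}F_{\det(E/G_k),\tilde h}=\sum_{i=1}^k\frac{n+1-2i}{2}=\frac{k(n-k)}{2}$, while rescaling the metric by $e^{-u_k}$ adds $\tfrac12\Delta_{g_X}u_k$; together with $\|\gamma_k\|^2_{h,g_X}=e^{2u_k-u_{k-1}-u_{k+1}}\|\gamma_k\|^2_{\tilde h,g_X}$, inequality \eqref{supersolution global fil eqn} becomes \eqref{eqn3} after multiplying by $-2$, and likewise \eqref{global fil eqn} becomes \eqref{eqn2}. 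The main obstacle I anticipate is not conceptual but one of sign and normalization: fixing consistently the conventions for $\sqrt{-1}\,\Lambda_{\omega_X}$, for the Chern curvature of a determinant, and for the direction of the Gauss--Codazzi defect, so that the second-fundamental-form contributions and the crossing Higgs terms all enter with the same nonnegative sign. Checking the telescoping identity in the rank-two case, where $[\theta,\theta^{*_h}]_{11}=-\|\gamma_1\|^2_{h,g_X}$ and there are no $a_{ji}$, is a convenient sanity test that the bookkeeping and all signs are aligned.
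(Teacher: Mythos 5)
Your argument is correct and is essentially the paper's own proof: both restrict the traced Hitchin equation to the block $F^{\perp}\cong L_1\oplus\cdots\oplus L_k$ complementary to $G_k$, identify the second fundamental form terms $\beta_{ji}$ and the crossing Higgs components ($\gamma_k$ and the $a_{ji}$) as the sign-definite remainders, and then translate via the background metric $\tilde h$ exactly as you do. The only cosmetic difference is that you reach the block curvature through $\det(E/G_k)\cong\det E\otimes\det(G_k)^{-1}$ and Gauss--Codazzi for the subbundle, whereas the paper computes the curvature of $F^{\perp}$ with its quotient holomorphic structure directly; the resulting identity and signs agree.
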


\begin{proof}
For a holomorphic $\theta$-invariant subbundle $F$ of $E$, let $F^{\perp}$ denote the subbundle of $E$ orthogonal to $F$ with respect to the harmonic metric $h$. $F^{\perp}$ can be equipped with the quotient holomorphic structure from $E/F$. Considering the smooth orthogonal decomposition
$$
E = F^{\perp} \oplus F,
$$
we have the expressions for the holomorphic structure $\bar{\partial}_E$ and the Higgs field $\theta$ as
\begin{equation*}
    \bar{\partial}_E = \begin{pmatrix}
    \bar{\partial}_{F^{\perp}} & 0 \\
    \beta & \bar{\partial}_{F}
    \end{pmatrix}, \quad \theta = \begin{pmatrix}
    \theta_1 & 0 \\
    B & \theta_2
    \end{pmatrix}, \quad h = \begin{pmatrix}
    h_1 & 0 \\
    0 & h_2
    \end{pmatrix},
\end{equation*}
where $\beta \in \Omega^{0,1}(X, \text{Hom}(F^{\perp}, F))$ and $B \in \Omega^{1,0}(X, \text{Hom}(F^{\perp}, F))$. 

Denote $\bar\partial_E^0=\begin{pmatrix}
    \bar{\partial}_{F^{\perp}} & 0 \\
 0 & \bar{\partial}_{F}
    \end{pmatrix}.$

Under a holomorphic frame $(s^{\perp}, s)$ with respect to $(\bar{\partial}_{F^{\perp}}, \bar{\partial}_{F})$ (note that $s^{\perp}$ is not holomorphic with respect to $\bar{\partial}_E$), the connection 1-form of the Chern connection $\nabla_{\bar{\partial}_E, h}$ is given by $\omega + \beta$, where
$$
\omega = h^{-1} \partial h - h^{-1} \beta^* h.
$$
Here, we use $\beta$ to also denote $\begin{pmatrix}
0 & 0 \\
\beta & 0
\end{pmatrix}$ (similarly for $B$).

For a $(1,1)$-tensor $A$, such as $\theta$, $B$, or $\beta$, in a local frame, $A^{*_h} = h^{-1} A^* h$. Thus, $\beta^{*_h} = h_1^{-1} \beta^* h_2$. Calculations show that the Chern connection $\nabla_{\bar{\partial}_E, h}$ and the adjoint $\theta^{*_h}$ of the Higgs field $\theta$ are
\begin{equation*}
    \nabla_{\bar{\partial}_E, h} = \begin{pmatrix}
    \nabla_{\bar{\partial}_{F^{\perp}}, h_1} & -\beta^{*_h} \\
    \beta & \nabla_{\bar{\partial}_{F}, h_2}
    \end{pmatrix}, \quad
    \theta^{*_h} = \begin{pmatrix}
    \theta_1^{*_{h_1}} & B^{*_h} \\
    0 & \theta_2^{*_{h_2}}
    \end{pmatrix}.
\end{equation*}
The Chern curvature is given by
$$
F(\nabla_{\bar{\partial}_E, h}) = d(\omega + \beta) + (\omega + \beta) \wedge (\omega + \beta).
$$
Restricting to the $\text{Hom}(F^{\perp}, F^{\perp})$ component, we have
$$
F(\nabla_{\bar{\partial}_E, h})|_{\text{Hom}(F^{\perp}, F^{\perp})} = F(\nabla_{\bar{\partial}_{F^{\perp}}, h_1}) - \beta^{*_h} \wedge \beta.
$$
Thus, restricting the Hitchin equation to $\text{Hom}(F^{\perp}, F^{\perp})$, we obtain
\begin{equation*}
    F(\nabla_{\bar{\partial}_{F^{\perp}}, h_1}) - \beta^{*_h} \wedge \beta + B^{*_h} \wedge B + [\theta_1, \theta_1^{*_{h_1}}] = 0.
\end{equation*}
Taking the trace and noting that $\text{tr}([\theta_1, \theta_1^{*_{h_1}}]) = 0$, we obtain
\begin{equation*}
    \text{tr}(F(\nabla_{\bar{\partial}_{F^{\perp}}, h_1})) - \text{tr}(\beta^{*_h} \wedge \beta) + \text{tr}(B^{*_h} \wedge B) = 0.
\end{equation*}
Contracting with the Kähler form $\omega$, we have
$$-\sqrt{-1}\Lambda_{\omega_X} \text{tr}(F(\nabla_{\bar\partial_{F^{\perp}}, h_1}))-\sqrt{-1}\Lambda_{\omega_X} \text{tr}(B^{*_h}\wedge B)=-\sqrt{-1}\Lambda_{\omega_X}\text{tr}(\beta^{*_h}\wedge \beta)\leq 0.$$
Notice that $\text{tr}(F(\nabla_{\bar\partial_{F^{\perp}}, h_1}))$ is the curvature form $F_{\det F^{\perp}}$ of the determinant line bundle of $F^{\perp}$ with respect to the induced metric.
Therefore, we obtain the following inequality,
\begin{equation}\label{standard}
-\sqrt{-1}\Lambda_{\omega_X} F_{\det F^{\perp}}-\sqrt{-1}\Lambda_{\omega_X} \text{tr}(B^{*_h}\wedge B)\leq 0.
\end{equation}
The equality holds if and only if $\beta=0.$

Now for each $k=1,\cdots, n-1$, let $F$ be the subbundle $G_k=\bigoplus_{i\geq k+1}L_i$. Then we have $F^{\perp}=\bigoplus_{i\leq k}L_i$. The associated factor $B$ is given by
$$B=\begin{pmatrix}a_{k+1,1}&a_{k+1,2}&\cdots&a_{k+1,k-1}&\gamma_k\\a_{k+2,1}&a_{k+2,2}&\cdots&a_{k+2,k-1}&a_{k+2,k}\\\vdots&\vdots&\cdots&\vdots&\vdots\\a_{n,1}&a_{n,2}&\cdots&a_{n,k-1}&a_{n,k}\end{pmatrix},$$
mapping $F^{\perp}$ to $F\otimes K_{X}.$
Then
\begin{eqnarray}\label{B}
 \sqrt{-1}\Lambda_{\omega_X} \text{tr}(B^{*_h}\wedge B)\leq -2|\gamma_k|^2h_k^{-1}h_{k+1}/g_X.   
\end{eqnarray}
The equality holds if and only if all $a_{ij}=0.$ 

Since $\det(F^{\perp})=\det(E/G_k)$, together with (\ref{standard}), (\ref{B}), we obtain (\ref{supersolution global fil eqn}).

Locally, the induced metric on $\det F^{\perp}$ is given by $\prod\limits_{i=1}^k h_i(z).$ 
Then locally
\begin{equation}\label{F}
    -\sqrt{-1}\Lambda_{\omega_X} F_{\det F^{\perp}}=\frac{2}{g_X(z)}\partial_z\partial_{\bar{z}}\log (\prod\limits_{i=1}^kh_i(z)).
\end{equation}
Putting (\ref{supersolution global fil eqn}) and (\ref{F}) together, we obtain \begin{equation}\label{supersolution}   \partial_z \partial_{\bar{z}} \log \big( \prod_{i=1}^k h_i(z) \big) + |\gamma_k(z)|^2 h_k(z)^{-1} h_{k+1}(z) \leq 0, \quad k = 1, \ldots, n-1.\end{equation}
Thus, for $k=1,\cdots, n-1,$
\begin{equation}\label{Hloc}
    \partial_z \partial_{\bar{z}} \big( \sum_{k=1}^i w_k \big) + \partial_z \partial_{\bar{z}} \log \big( \prod_{k=1}^i \tilde{h}_k(z) \big) + |\gamma_i(z)|^2 e^{-w_i + w_{i+1}} \tilde{h}_i(z)^{-1} \tilde{h}_{i+1}(z) = 0.
\end{equation}
Applying the formula $\partial_z \partial_{\bar{z}} \log \tilde{h}_i(z) = -\frac{n+1-2i}{4} g_X(z)$, $u_i=-\sum_{k=1}^iw_k$, and note that \[\sum_{k=1}^i (n+1-2k) = i(n-i),\] we obtain (\ref{eqn3}).

When $(E,\theta)$ is a CVHS and $h$ is a diagonal harmonic metric, then all $a_{ij}$ vanishes and $\beta=0$. Thus we obtain the equality in Equation (\ref{supersolution global fil eqn}) and thus the equality in Equation (\ref{eqn3}).
\end{proof}

\subsection{$n$-Fuchsian case} \label{subsection:Fuchsian}
Suppose $L_i=K_{X}^{\frac{n+1-2i}{2}}$, then $\gamma_i$'s are constants. Recall the induced Hermitian metric on $K_{X}^{-1}$ is $\hat g_X=\frac{1}{2}g_X(z)dz\otimes d\bar{z}$.  Consider $\tilde{h}_i=\hat g_X^{-\frac{n+1-2i}{2}}.$ Locally, let $s_i=dz^{\frac{n+1-2i}{2}}$. Then $\tilde{h}_i(z)=(\frac{1}{2}g_X(z))^{-\frac{n+1-2i}{2}}.$ And $$\partial_z\partial_{\bar{z}}\log \tilde{h}_i=-\frac{n+1-2i}{2}\partial_z\partial_{\bar{z}}\log g_X(z)=-\frac{n+1-2i}{4}g_X(z).$$
In this case, if $\tilde{h}_i$ is a harmonic metric, i.e., the conformal factor $w_i$'s are constants, then the graded Higgs bundle can be given by $\gamma_i=\sqrt{\frac{i(n-i)}{2}}$. So $||\gamma_i||_{\tilde h, g_X}^2=\frac{i(n-i)}{2}.$

In this case, for a diagonal harmonic metric $h$, using $\tilde h$ in the above choice,  $(u_1,\cdots, u_n)$ satisfies
\begin{equation}\label{eqn4}
    \Delta_{g_X} u_i + i(n-i) - i(n-i) e^{2u_i - u_{i-1} - u_{i+1}} = 0, \quad i = 1, \ldots, n-1, \quad u_0 = u_n = 0.
\end{equation}
It is obvious that $(0,\cdots,0)$ is a solution to the system and thus $\tilde h=\oplus_{i=1}^n \hat g_X^{-\frac{n+1-2i}{2}}$ is a harmonic metric. Thus, \[||\theta||_{\tilde h, g_X}^2=\sum_{i=1}^{n-1}||\gamma_i||_{\tilde h, g_X}^2=\sum_{i=1}^{n-1}\frac{i(n-i)}{2}=\frac{n(n^2-1)}{12}.\]

\section{Generalization of Kalka-Yang's theorem to system}\label{Kalka-Yang}
 Consider the system of equations
\begin{equation}\label{maineq}
    \Delta_{g_X} u_i + i(n-i) - k_i e^{2u_i - u_{i-1} - u_{i+1}} = 0, \quad 1 \leq i \leq n-1, \quad u_0 = u_n = 0.
\end{equation}

\begin{definition}
1. An $n$-tuple of smooth functions \(u = (u_1, \ldots, u_{n-1})\) is called a subsolution of \eqref{maineq} if it satisfies \begin{equation*}
    \Delta_{g_X} u_i + i(n-i) - k_i e^{2u_i - u_{i-1} - u_{i+1}}\geq 0, \quad 1 \leq i \leq n-1, \quad u_0 = u_n = 0.
\end{equation*}
2. An $n$-tuple of smooth functions \(u = (u_1, \ldots, u_{n-1})\) is called a supersolution of \eqref{maineq} if it satisfies \begin{equation*}
    \Delta_{g_X} u_i + i(n-i) - k_i e^{2u_i - u_{i-1} - u_{i+1}}\leq 0, \quad 1 \leq i \leq n-1, \quad u_0 = u_n = 0.
\end{equation*}
\end{definition}
\begin{definition}
A solution \(u = (u_1, \ldots, u_{n-1})\) of \eqref{maineq} is called maximal if, for any subsolution \(v = (v_1, \ldots, v_{n-1})\) of \eqref{maineq}, we have \(u_i \geq v_i\) for all \(i\). Clearly, the maximal solution is unique.
\end{definition}

\begin{definition}\label{essentially positive}
A tuple of smooth nonnegative functions $(k_1,\cdots,k_{n-1})$ on $X$ is called simultaneously essentially positive if there is an exhaustion of $X$ by smooth bounded domains $\{\Omega_j\}_{j=1}^{\infty}$ such that $k_i>0$ on $\partial \Omega_j$ for every $i=1,\cdots,n-1$ and $j=1,2,\cdots.$
\end{definition}

We always assume \((X, g_X)\) is a non-compact complete hyperbolic surface. In this section, we show the following result:

\begin{prop}\label{prop:main1}
Let $(k_1,\cdots, k_{n-1})$ be a tuple of smooth  simultaneously essentially positive functions on \(X\). 
Suppose \eqref{maineq} has a subsolution $\underline{u} = (\underline{u}_1, \ldots, \underline{u}_{n-1})$, then it has a maximal solution $u=(u_1,\cdots,u_{n-1})$.

Moreover, either \(\underline{u}_i < u_i\) for every \(i = 1, \ldots, n-1\), or \(\underline{u}_i \equiv u_i\) for every \(i = 1, \ldots, n-1\).
\end{prop}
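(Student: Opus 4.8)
The plan is to build the maximal solution as a monotone limit of ``large solutions'' over an exhaustion of $X$, the two analytic engines being a comparison principle for the coupled system and the existence of solutions that blow up along the boundary of each exhausting domain. The feature that makes everything run is that \eqref{maineq} is \emph{cooperative}: writing the reaction term as $f_i = i(n-i) - k_i e^{2u_i - u_{i-1} - u_{i+1}}$, one checks that $f_i$ is decreasing in $u_i$ (where $k_i>0$) and nondecreasing in $u_{i-1},u_{i+1}$. I would first record the resulting comparison principle: if $\overline{u}$ is a supersolution and $\underline{w}$ a subsolution on a bounded domain $\Omega$ with $\underline{w}_i \le \overline{u}_i$ on $\partial\Omega$ for every $i$, then $\underline{w}_i \le \overline{u}_i$ throughout $\Omega$. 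The proof linearizes the difference $d_i = \underline{w}_i - \overline{u}_i$ by the mean value theorem, giving $\Delta_{g_X} d_i \ge c_i(2d_i - d_{i-1} - d_{i+1})$ with $c_i = k_i e^{\xi_i}\ge 0$; at an interior positive maximum of $\max_i d_i$, attained in a component $i_0$, one has $d_{i_0}\ge d_{i_0\pm 1}$ (with $d_0=d_n=0$), so $2d_{i_0}-d_{i_0-1}-d_{i_0+1}\ge 0$ and $\Delta_{g_X}d_{i_0}\ge 0$, contradicting the maximum. Equivalently, this is the weak maximum principle for weakly coupled cooperative systems with nonpositive row sums, which I would cite.

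Next I would carry out the exhaustion construction. Using the domains $\Omega_j$ from the definition of simultaneous essential positivity (so $k_i>0$ on $\partial\Omega_j$ for all $i,j$), I solve on each $\Omega_j$ the Dirichlet problem with boundary values tending to $+\infty$, producing a large solution $U^{(j)}=(U^{(j)}_1,\ldots,U^{(j)}_{n-1})$, realized as the increasing (in the boundary constant $C$) limit of finite-boundary-value solutions $V^{(j,C)}$. Each $V^{(j,C)}$ is obtained by monotone iteration between the subsolution $\underline{u}$ and an explicit finite supersolution. A convenient supersolution is available on any bounded domain by taking all components equal to a single function $\phi \ge 0$ with $\Delta_{g_X}\phi \le -\max_i i(n-i)$ and $\phi \ge C$ on $\partial\Omega_j$: the coupling exponents $2\phi-\phi_{i-1}-\phi_{i+1}$ (with the convention $\phi_0=\phi_n=0$) are then nonnegative, so the supersolution inequality reduces to $\Delta_{g_X}\phi + i(n-i) \le k_i e^{(\cdot)}$, which holds because the left side is $\le 0$ while the right side is $\ge 0$.

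The heart of the argument is to show that $V^{(j,C)}$ stays bounded above on compact subsets of $\Omega_j$ as $C\to\infty$, so that $U^{(j)}$ is a genuine interior-finite solution. Near $\partial\Omega_j$ the positivity $k_i>0$ supplies a Keller--Osserman barrier bounding $V^{(j,C)}$ independently of $C$; in the interior, where some $k_i$ may vanish, the equation degenerates to $\Delta_{g_X}U^{(j)}_i = -i(n-i)$, forcing $U^{(j)}_i$ to be superharmonic there and hence to attain its maximum on the boundary of $\{k_i>0\}$, where control has already been propagated. \emph{This step --- existence and interior a priori bounds for the large solutions of the coupled system with interior-vanishing $k_i$ --- is the main obstacle}, being the system analogue of the delicate estimate in Kalka--Yang's scalar theorem and requiring the bounds to be handled simultaneously across the coupled components. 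Granting it, comparison shows $\{U^{(j)}\}_j$ is decreasing in $j$ and bounded below by $\underline{u}$, so on any compact set $\underline{u}\le U^{(j)}\le U^{(j_0)}$ for $j\ge j_0$; interior Schauder estimates then yield a limit $u=\lim_j U^{(j)}$ solving \eqref{maineq}. Maximality is immediate, since any subsolution $v$ is finite on each $\partial\Omega_j$ while $U^{(j)}=+\infty$ there, so comparison gives $v\le U^{(j)}$ and hence $v\le u$.

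Finally I would prove the dichotomy by the strong maximum principle. With $d_i = u_i - \underline{u}_i \ge 0$, subtracting the subsolution inequality from the equation and linearizing gives $\Delta_{g_X} d_i - 2k_i e^{\xi_i} d_i \le -k_i e^{\xi_i}(d_{i-1}+d_{i+1}) \le 0$, so each $d_i$ is a nonnegative supersolution of $\Delta_{g_X} - 2k_i e^{\xi_i}$. If some $d_{i_0}$ vanishes at an interior point, the strong maximum principle forces $d_{i_0}\equiv 0$ on $X$; feeding this back into the $i_0$-th inequality yields $k_{i_0}(d_{i_0-1}+d_{i_0+1})\equiv 0$, so $d_{i_0\pm1}$ vanishes at the interior points of $\{k_{i_0}>0\}\supset\bigcup_j\partial\Omega_j$, and a further application of the strong maximum principle propagates $d_{i_0\pm1}\equiv 0$. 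Since the index chain $1-2-\cdots-(n-1)$ is connected, all $d_i\equiv 0$. Hence either every $d_i>0$, that is $\underline{u}_i<u_i$ for all $i$, or $\underline{u}_i\equiv u_i$ for all $i$, which is the claimed alternative.
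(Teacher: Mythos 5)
Your overall architecture is exactly the paper's (and Kalka--Yang's): a comparison principle for the cooperative system, Dirichlet solutions on an exhaustion with boundary data tending to $+\infty$ sandwiched between the given subsolution and an explicit supersolution, uniform interior bounds so that the large solutions exist, a decreasing limit over the exhaustion, maximality from the boundary blow-up, and the dichotomy from the strong maximum principle. The comparison principle, the supersolution $\phi$, the monotone limits, the maximality argument, and the ``moreover'' part are all fine as you present them.

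The genuine gap is precisely the step you flag and then ``grant'': the uniform-in-$C$ interior bound on $V^{(j,C)}$. Two problems. First, the ``Keller--Osserman barrier'' for the \emph{coupled} system near $\partial\Omega_j$ is asserted, not constructed; this is the nontrivial point. The paper builds it explicitly: on a collar where $k_i>i(n-i)\delta$ for all $i$, take a small hyperbolic disk $D_d(x)$ in the collar, let $\tilde w$ be the conformal factor of the complete metric of curvature $-\delta$ on it, and check that $\tilde u_i=\tfrac{i(n-i)}{2}\tilde w$ solves the system with $k_i$ replaced by $i(n-i)\delta$ and blows up on the boundary of the disk; since $V^{(j,C)}$ is a subsolution of that auxiliary system, comparison gives a bound at $x$ independent of $C$. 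The scaling by $i(n-i)/2$, which makes a single scalar barrier solve all $n-1$ coupled equations simultaneously, is the idea your sketch is missing. Second, your interior propagation is wrong as stated: where $k_i$ vanishes you get $\Delta_{g_X}U_i^{(j)}=-i(n-i)<0$, i.e.\ $U_i^{(j)}$ is superharmonic, and superharmonic functions satisfy a \emph{minimum} principle, not a maximum principle, so this does not force the maximum onto $\partial\{k_i>0\}$ (and in general $\{k_i=0\}$ need not be confined to a nice region). The correct route, as in the paper, is to apply the weak maximum principle for the \emph{linear} cooperative system satisfied by the difference $V^{(j,C)}-V^{(j,C_0)}$ (fixed $C_0$): its nonnegative maximum over the inner domain is attained on the boundary of that inner domain, which lies in the collar already controlled by the barrier. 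With these two repairs your argument matches the paper's proof.
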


When \(n = 2\), Equation \eqref{maineq} simplifies to
\begin{equation}\label{eqn:single}
    \Delta_{g_X} u + 1 - k e^{2u} = 0.
\end{equation}
This is the prescribed curvature equation, which seeks a function \(u\) such that the metric \(e^{2u} \cdot g_X\) has curvature \(-k\). In this context, Kalka and Yang proved the following theorem, which Proposition \ref{prop:main1} generalizes:

\begin{theorem}[Kalka-Yang \cite{KalkaYang}]
Let \(k\) be an essentially positive smooth function on \(X\). If \eqref{eqn:single} has a subsolution, then it has a maximal solution.
\end{theorem}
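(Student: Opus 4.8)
The plan is to prove existence of the maximal solution by an exhaustion argument built from \emph{boundary blow-up (large) solutions}: on each domain of the exhaustion I construct a solution that blows up to $+\infty$ on the boundary, which by comparison automatically dominates every global subsolution and is monotone in the domain, so that the limit is a global solution that is maximal. Fix the exhaustion $\{\Omega_j\}$ provided by essential positivity of $k$, arranged so that $\overline{\Omega_j}\subset\Omega_{j+1}$ and $k>0$ on $\partial\Omega_j$. Since $k$ is continuous, $k\ge k_0>0$ on a collar neighborhood of $\partial\Omega_j$ for some $k_0=k_0(j)$; this collar is the only place where positivity, rather than mere nonnegativity, of $k$ will be used.

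The first and main step is to construct on each $\Omega_j$ a large solution $u_j$ of \eqref{eqn:single}, i.e. a solution with $u_j\to+\infty$ on $\partial\Omega_j$. I would obtain $u_j$ as the increasing limit, as $m\to\infty$, of the Dirichlet solutions $u_{j,m}$ of \eqref{eqn:single} on $\Omega_j$ with constant boundary data $m$. For fixed $m$ the solution $u_{j,m}$ exists by monotone iteration between the subsolution $\underline{u}$ (admissible once $m$ exceeds $\max_{\overline{\Omega_j}}\underline{u}$) and a finite supersolution; for the latter one may take a solution $\psi$ of $\Delta_{g_X}\psi=-1$ with large boundary data, since $\Delta_{g_X}\psi+1-ke^{2\psi}=-ke^{2\psi}\le 0$ using only $k\ge 0$. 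The delicate point is to bound $u_{j,m}$ from above \emph{uniformly in $m$}: on the collar where $k\ge k_0$, the exponential reaction term $ke^{2u}$ is of Keller–Osserman type and admits a supersolution of the profile $-\log d + C$ (with $d$ the distance to $\partial\Omega_j$) that blows up at $\partial\Omega_j$ but remains finite on the inner edge of the collar. Comparison with this collar barrier caps $u_{j,m}$ on the inner edge independently of $m$, and the maximum principle then caps it on all of $\Omega_j$. Hence $u_{j,m}\uparrow u_j$ with $u_j$ finite in $\Omega_j$ and infinite on $\partial\Omega_j$; in the interior $k$ may vanish harmlessly, since the collar barrier shields the interior from the diverging boundary data.

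Granting the $u_j$, the remainder is comparison plus a monotone limit. For any global subsolution $v$ of \eqref{eqn:single}, set $w=u_j-v$; subtracting the equations and applying the mean value theorem gives $\Delta_{g_X}w-c\,w\le 0$ with $c=2ke^{2\xi}\ge 0$ (again only $k\ge 0$ is needed). Since $w\to+\infty$ on $\partial\Omega_j$, the maximum principle for operators with nonpositive zeroth-order coefficient forbids a negative interior minimum, so $v\le u_j$ on $\Omega_j$. Taking $v=\underline{u}$ yields the uniform lower bound $u_j\ge\underline{u}$, and taking $v=u_{j+1}|_{\Omega_j}$ (finite on $\overline{\Omega_j}$) yields the monotonicity $u_{j+1}\le u_j$ on $\Omega_j$. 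Thus on each fixed $\Omega_m$ the tail $\{u_j\}_{j\ge m}$ decreases and is bounded below by $\underline{u}$, hence converges pointwise to some $u$. Uniform $L^\infty$ bounds on compacta (between $\underline{u}$ and $u_{m+1}$) promote this, via $W^{2,p}$ and Schauder interior estimates, to $C^\infty_{\mathrm{loc}}$ convergence, so $u$ solves \eqref{eqn:single} on all of $X$. Finally, from $v\le u_j$ for all $j$ and $u_j\downarrow u$ we conclude $v\le u$ for every subsolution $v$, so $u$ is the maximal solution. I expect the construction and uniform control of the collar blow-up barrier — in particular ensuring that the interior region where $k$ vanishes stays bounded uniformly in $m$ — to be the crux; everything downstream is standard elliptic comparison and compactness.
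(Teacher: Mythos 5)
Your overall strategy coincides with the paper's (the scalar case of Proposition \ref{prop:main1}, proved via Lemmas \ref{step1} and \ref{step2}): exhaust $X$ by domains $\Omega_j$ with $k>0$ on $\partial\Omega_j$, build boundary blow-up solutions on each $\Omega_j$ as increasing limits of Dirichlet solutions of \eqref{eqn:single} with constant data $m$ (sub/supersolution method), get a uniform-in-$m$ interior bound from the positivity of $k$ near $\partial\Omega_j$, and pass to a decreasing limit in $j$; your observation that the blow-up property alone (rather than maximality on each bounded domain, which is the content of Lemma \ref{step2}) already forces domination of every global subsolution is a legitimate streamlining. However, there is a genuine gap at exactly the step you yourself flag as the crux. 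Your collar barrier $v=-\log d+C$ blows up only on the outer edge $\partial\Omega_j$ and is finite on the inner edge $\{d=\epsilon\}$. To compare $u_{j,m}$ with $v$ on the collar you must know $u_{j,m}\le v$ on the \emph{entire} boundary of the collar, in particular on the inner edge --- which is precisely the bound you are trying to prove. What the differential inequality actually gives is that $w=u_{j,m}-v$ satisfies $\Delta_{g_X}w-cw\ge 0$ with $c=2ke^{2\xi}\ge 0$, so $w$ has no positive interior maximum; but its maximum can perfectly well sit on the inner edge, and since the boundary data $m\to\infty$ feeds arbitrarily large values into the collar, no uniform cap results. As written, "comparison with this collar barrier caps $u_{j,m}$ on the inner edge" is circular.

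The repair is standard and is exactly what the paper does in Lemma \ref{step1}: compare on subdomains whose whole boundary carries an infinite barrier. For $x$ near $\partial\Omega_j$, take a small geodesic disk $D\subset\{k\ge k_0\}$ around $x$ and compare $u_{j,m}$ with the blow-up solution on $D$ furnished by the complete metric of constant curvature $-\delta$ (locally $\tilde w$ with $\Delta_{g_{\mathbb{D}}}\tilde w+2-2\delta e^{\tilde w}=0$ and $\tilde w=+\infty$ on $\partial D$); since this barrier is infinite on all of $\partial D$, the comparison principle (Lemma \ref{lem:comparisonprincipal}, applied on slightly shrunk disks) requires no a priori information on $u_{j,m}$ and yields $u_{j,m}(x)\le C(\mathrm{dist}(x,\partial\Omega_j),k_0)$ uniformly in $m$. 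Equivalently, you may keep your collar but use a barrier blowing up on both edges, e.g.\ $-\log d-\log(\epsilon-d)+C$; this is the Keller--Osserman interior estimate for $\Delta_{g_X}u\ge k_0e^{2u}-1$. With that single repair, the rest of your argument --- propagating the bound into the region where $k$ may vanish by the maximum principle, the monotone limits with elliptic bootstrap, and maximality from $v\le u_j$ for every subsolution $v$ --- goes through as in the paper.
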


Note that when $k=1$, $u=0$ is obviously a solution to Equation (\ref{eqn:single}). In fact, it is the maximal solution, which corresponds to the fact that $g_X$ is the maximal metric among all hyperbolic metric in its conformal class. As a generalization of this fact, we also prove the following proposition.
\begin{prop}\label{prop:main2}
When $k_i=i(n-i)$, $(0,\cdots,0)$ is the maximal solution to (\ref{maineq}).
\end{prop}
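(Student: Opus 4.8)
The goal is to prove Proposition \ref{prop:main2}: when $k_i = i(n-i)$ for all $i$, the constant tuple $(0,\ldots,0)$ is the maximal solution to the system \eqref{maineq}. First I note that $(0,\ldots,0)$ is indeed a solution, since plugging $u_i \equiv 0$ into the $i$-th equation gives $0 + i(n-i) - i(n-i) \cdot e^0 = 0$. By Proposition \ref{prop:main1}, since a subsolution exists (namely the zero tuple itself), a maximal solution $u = (u_1,\ldots,u_{n-1})$ exists and satisfies $u_i \geq 0$ for all $i$ by maximality. Thus the entire content of the proposition is the reverse inequality: I must show any solution, in particular the maximal one, satisfies $u_i \leq 0$; equivalently $(0,\ldots,0)$ dominates every \emph{solution} from above. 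The natural strategy is a maximum-principle/comparison argument showing that $(0,\ldots,0)$ is a \emph{supersolution} that dominates the maximal solution.

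The plan is to compare the maximal solution $u$ against the zero supersolution. Since $u_i \geq 0$, consider the sum $S = \sum_{i=1}^{n-1} u_i$ (or a suitable weighted combination) and sum the equations. Summing \eqref{maineq} over $i$, the second-difference structure of the exponent $2u_i - u_{i-1} - u_{i+1} = -(u_{i+1}-u_i) + (u_i - u_{i-1})$ telescopes in the linear part, and I would exploit that $\Delta_{g_X}\big(\sum_i u_i\big) = \sum_i i(n-i)\big(e^{2u_i - u_{i-1}-u_{i+1}} - 1\big)$. The key convexity input is that when all $u_i \geq 0$ with $u_0 = u_n = 0$, the sequence $(u_i)$ is pinned at both ends and nonnegative in between; I want to extract from this that at least one exponent $2u_i - u_{i-1}-u_{i+1}$ is nonpositive at an interior maximum of the relevant quantity. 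Concretely, at a point where $u_i - u_{i-1}$ is configured so that $u$ has an interior maximum in the index $i$, the discrete second difference $2u_i - u_{i-1} - u_{i+1} \geq 0$, making $e^{(\cdot)} \geq 1$. The cleanest route is to run the Omori--Yau / Kalka--Yang maximum principle on the complete surface $(X,g_X)$ applied to the function $u_i$ that realizes $\max_i \sup_X u_i$, arguing that at (an approximate) maximum the Laplacian term is $\leq 0$ while $i(n-i)(1 - e^{2u_i-u_{i-1}-u_{i+1}}) $ forces $u_i \leq 0$.

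More precisely, I would argue as follows. Suppose for contradiction that $\sup_i \sup_X u_i > 0$. Pick the index $i_0$ achieving the largest supremum; along an Omori--Yau sequence $x_m$ for $u_{i_0}$ one has $\Delta_{g_X} u_{i_0}(x_m) \leq o(1)$ and $u_{i_0}(x_m) \to \sup$. Because $i_0$ realizes the global maximal sup, the competing values satisfy $u_{i_0-1}(x_m), u_{i_0+1}(x_m) \leq u_{i_0}(x_m) + o(1)$ (using $u_0=u_n=0 \leq \sup$ at the boundary indices), so the exponent obeys $2u_{i_0} - u_{i_0-1} - u_{i_0+1} \geq -o(1)$, giving $e^{2u_{i_0}-u_{i_0-1}-u_{i_0+1}} \geq 1 - o(1)$. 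Substituting into the $i_0$-equation yields $0 = \Delta_{g_X} u_{i_0} + i_0(n-i_0)\big(1 - e^{2u_{i_0}-u_{i_0-1}-u_{i_0+1}}\big) \leq o(1) + i_0(n-i_0)\cdot o(1)$, which is consistent, so I must sharpen this to get a strict sign. The refinement is to use the completeness and the essentially-positive structure to ensure the maximum principle detects $\sup u_{i_0} \leq 0$, exactly as in Kalka--Yang's single-equation case, where $u=0$ is shown maximal for $k\equiv 1$.

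The step I expect to be the main obstacle is precisely this coupling: in the scalar Kalka--Yang case one directly compares $u$ to $0$ via the monotonicity of $t \mapsto e^{2t}$, but here the off-diagonal terms $u_{i-1}, u_{i+1}$ in each exponent mean a naive pointwise maximum principle does not decouple. The resolution I would pursue is to select the index and point jointly — running the Omori--Yau principle on $\max_i u_i$ viewed as a function on $X$ (or on each $u_i$ and taking the worst index) so that at the detecting sequence the neighboring indices are controlled, forcing the exponent to be $\geq 1$ and hence the curvature-type term to have the correct sign. Alternatively, and perhaps more robustly, I would invoke the comparison half of Proposition \ref{prop:main1}'s construction directly: the zero tuple is simultaneously a solution and, by the sign of $i(n-i)(1-e^{(\cdot)})$ for nonnegative pinned sequences, a barrier from above, so the iteration producing the maximal solution cannot exceed it. Making the boundary behavior at the ends of $X$ rigorous — ensuring no mass escapes to infinity on the complete hyperbolic surface — is where the essential positivity and the exhaustion $\{\Omega_j\}$ from Definition \ref{essentially positive} enter, mirroring Kalka--Yang.
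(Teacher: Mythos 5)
Your proposal correctly reduces the problem to showing that every solution satisfies $u_i\le 0$, and you correctly diagnose where the naive argument stalls: at an Omori--Yau sequence for the index $i_0$ realizing $\max_i\sup_X u_i=S$, the coupling gives only $2u_{i_0}-u_{i_0-1}-u_{i_0+1}\ge 2u_{i_0}(x_m)-2S\to 0$, so the equation yields $0\le o(1)$ and no information about $S$ (in the scalar Kalka--Yang case the exponent is $2u$ itself, which is why $e^{2S}\le 1$ drops out there but not here). The gap is that you never supply the promised ``sharpening.'' The suggested alternatives do not close it either: summing the equations telescopes the exponents' linear parts to $u_1+u_{n-1}$, which has no useful sign; the claim that $(0,\ldots,0)$ is ``a barrier from above'' begs the question, since applying a comparison principle on the non-compact $X$ requires exactly the control at infinity you are trying to establish; and the Omori--Yau principle additionally presupposes $\sup_X u_{i_0}<\infty$, which is not known a priori for a solution of \eqref{maineq}.

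The missing idea, which is how the paper actually proceeds, is an explicit exhausting family of barriers rather than a maximum principle at infinity. One first lifts to $X=\mathbb{D}$. For $r<1$ the complete hyperbolic metric $g_r=e^{\tilde w}g_{\mathbb{D}}$ on $D_r=\{|z|<r\}$ produces, via $\tilde u_i^r=\tfrac{i(n-i)}{2}\tilde w$, an explicit solution of \eqref{eqn4} on $D_r$ that tends to $+\infty$ at $\partial D_r$. Hence for any solution $u$ on $\mathbb{D}$ the boundary comparison $u_i\le \tilde u_i^r$ on $\partial D_r$ is automatic, and the comparison principle (Lemma \ref{lem:comparisonprincipal}) on the \emph{bounded} domain $D_r$ --- where no behavior at infinity needs to be controlled and no a priori upper bound on $u$ is needed --- gives $u_i\le\tilde u_i^r$ on $D_r$. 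Letting $r\to 1$ at a fixed point, $\tilde u_i^r\to 0$, so $u_i\le 0$. Without this barrier construction (or some substitute for it), your argument does not reach the conclusion.
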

We delay the proof of Proposition \ref{prop:main1} and Proposition \ref{prop:main2} to the end of this section. We use Kalka-Yang's strategy \cite{KalkaYang} to prove Proposition \ref{prop:main1}. 

We will frequently use the following comparison principle:

\begin{lemma}\label{lem:comparisonprincipal} (Comparison principle)
Let \(\Omega\) be a bounded domain in \(X\). Let \(\underline{u} = (\underline{u}_1, \ldots, \underline{u}_{n-1})\) be a subsolution and \(\overline{u} = (\overline{u}_1, \ldots, \overline{u}_{n-1})\) be a supersolution to Equation \eqref{maineq} on \(\Omega\). Suppose \(\overline{u}_i \geq \underline{u}_i\) on \(\partial \Omega\) for every \(i = 1, \ldots, n-1\). Then either \(\underline{u}_i < \overline{u}_i\) for every \(i = 1, \ldots, n-1\), or \(\underline{u}_i \equiv \overline{u}_i\) for every \(i = 1, \ldots, n-1\).
\end{lemma}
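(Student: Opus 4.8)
The plan is to work throughout with the difference $w_i := \overline{u}_i - \underline{u}_i$ for $i = 1, \ldots, n-1$, together with the conventions $w_0 = w_n = 0$ coming from $u_0 = u_n = 0$. By hypothesis $w_i \geq 0$ on $\partial\Omega$, and $\Omega$ is a connected bounded domain. Subtracting the subsolution inequality for $\underline{u}$ from the supersolution inequality for $\overline{u}$ gives, for each $i$,
\[
\Delta_{g_X} w_i \leq k_i\left(e^{2\overline{u}_i - \overline{u}_{i-1} - \overline{u}_{i+1}} - e^{2\underline{u}_i - \underline{u}_{i-1} - \underline{u}_{i+1}}\right).
\]
Since the two exponents differ by exactly $2w_i - w_{i-1} - w_{i+1}$, the mean value theorem lets me write the right-hand side as $c_i\,(2w_i - w_{i-1} - w_{i+1})$, where $c_i = k_i e^{\xi_i}$ is a nonnegative bounded function on $\overline{\Omega}$. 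This recasts the problem as a \emph{cooperative} linear elliptic differential inequality with nonnegative off-diagonal coupling, $\Delta_{g_X} w_i - 2c_i w_i + c_i w_{i-1} + c_i w_{i+1} \leq 0$; it is the tridiagonal, cooperative structure that makes a comparison principle available.

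The first step is the weak comparison $w_i \geq 0$. Here I would introduce the continuous function $W := \min_{1\leq i\leq n-1} w_i$, which satisfies $W \geq 0$ on $\partial\Omega$. On the open set $\{W < 0\}$ I claim $W$ is superharmonic in the barrier sense: at any point, if $i$ realizes the minimum then $w_{i-1}, w_{i+1} \geq w_i$, so the coupling term $2w_i - w_{i-1} - w_{i+1} \leq 0$, and since $c_i \geq 0$ we get $\Delta_{g_X} w_i \leq 0$ there; a smooth function touching $W$ from below at such a point also touches $w_i$ from below, forcing its Laplacian to be $\leq 0$. The minimum principle then gives $\inf_{\{W<0\}} W = \inf_{\partial\{W<0\}} W \geq 0$, a contradiction, so $\{W<0\} = \emptyset$ and every $w_i \geq 0$. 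The key point, and the main obstacle, is precisely that the $k_i$ are only \emph{essentially positive} and may vanish: the min-function argument succeeds regardless of where the $k_i$ vanish because it uses only the sign of the coupling term at the active index, never the strict positivity of $c_i$.

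For the strict dichotomy I would use that, once every $w_j \geq 0$, dropping the nonnegative term $c_i(w_{i-1}+w_{i+1})$ yields the scalar inequality $\Delta_{g_X} w_i - 2c_i w_i \leq 0$ with $2c_i \geq 0$. If some $w_{i_0}$ vanishes at an interior point, the strong maximum principle (Hopf) on the connected domain $\Omega$ forces $w_{i_0} \equiv 0$. Feeding $w_{i_0}\equiv 0$ back into its own inequality gives $c_{i_0}(w_{i_0-1}+w_{i_0+1}) \equiv 0$; since $k_{i_0}\not\equiv 0$ (a consequence of essential positivity), the set $\{k_{i_0}>0\}$ is nonempty and open, and on it $w_{i_0\pm 1}$ vanish, so the strong maximum principle gives $w_{i_0\pm1}\equiv 0$ as well. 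Iterating this propagation along the chain $1 - 2 - \cdots - (n-1)$ forces all $w_i \equiv 0$. Hence either some $w_{i_0}$ has an interior zero and all $w_i \equiv 0$, or no $w_i$ has an interior zero and all $w_i > 0$ on $\Omega$, which is exactly the claimed dichotomy.

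I expect the delicate step to be the weak comparison in the presence of vanishing $k_i$: a naive pointwise maximum-principle argument at a negative interior minimum stalls precisely when $c_i$ vanishes there, and the min-function formulation together with the cooperative sign of the coupling is what circumvents this. The dichotomy, by contrast, is a fairly standard strong-maximum-principle plus connectivity argument, once the nonnegativity and the nontriviality $k_i\not\equiv 0$ are in hand.
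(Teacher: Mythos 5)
Your proposal is correct and follows essentially the same route as the paper: both linearize the difference of the two solutions via the mean value theorem into a cooperative tridiagonal linear system $\Delta_{g_X} w_i - c_i(2w_i-w_{i-1}-w_{i+1})\leq 0$ and then apply weak and strong maximum principles, the only difference being that you prove these inline (the min-function/viscosity argument for nonnegativity, and propagation of interior zeros along the chain $1-2-\cdots-(n-1)$ for the dichotomy) where the paper instead invokes its general Appendix lemma on fully coupled cooperative systems. One small caveat: your appeal to ``$k_{i_0}\not\equiv 0$ on $\Omega$ as a consequence of essential positivity'' is exactly the paper's full-coupling hypothesis (c), and for an arbitrary bounded subdomain $\Omega\subset X$ it does not literally follow from essential positivity (which only controls $k_i$ on the boundaries of an exhaustion of $X$); this is a shared, harmless imprecision, since in every application the $k_i$ are either positive constants or squared norms of nonzero holomorphic sections and hence vanish only on discrete sets.
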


\begin{proof}
Let \(u_i = \underline{u}_i - \overline{u}_i\) for \(1 \leq i \leq n-1\). Then \(u_i\) satisfies
\begin{equation}\label{linearization}
    \Delta_{g_X} u_i - c_i (2u_i - u_{i-1} - u_{i+1}) = 0,
\end{equation}
where
\[ c_i = k_i \int_{0}^1 e^{t(2\underline{u}_i - \underline{u}_{i-1} - \underline{u}_{i+1}) + (1-t)(2\overline{u}_i - \overline{u}_{i-1} - \overline{u}_{i+1})} \, dt. \]
We can verify that the above system satisfies the assumptions (a), (b), (c), and (d) in Lemma \ref{maximum principle} (assumption (c) is satisfied using the procedure in Remark \ref{assumption (c)}). This completes the proof.
\end{proof}

We will also use the method of super-subsolution for equation systems, which is prepared in the following lemma by Li-Mochizuki.
\begin{lemma}(\cite{LiMochizuki})
Let $(M,g)$ be a Riemannian manifold. Let $\bar{\Omega}$ be a bounded domain of $M$ with smooth boundary. Denote $\Omega$ as its interior. Let $F(x,y_1,\cdots,y_n)$ be a smooth function on $\Omega\times \mathbb{R}^n$ which satisfies $\frac{\partial F_k}{\partial y_j}\leq 0$ for $j\neq k$. Let $f=(f_1,\cdots,f_n)$ be continuous functions on $\partial \Omega$. Consider the system, for $k=1,\cdots, n$,
\begin{eqnarray*}
    \Delta_g u_k&=&F_k(x,u_1,\cdots,u_n) \quad \text{in }\Omega,\\
    u_k&=&f_k\qquad \qquad\qquad \quad \text{on }\partial \Omega.
\end{eqnarray*}
Suppose $\underline{u}$ and $\overline{u}$ are a subsolution and a
supersolution on $\bar{\Omega}$ respectively, satisfying $\underline{u}\leq \overline{u}$ in $\Omega$ and $\underline{u}\leq f\leq \overline{u}$ on $\partial \Omega$. Then there exists a smooth solution $u$ to the Dirichlet problem, satisfying $\underline{u}\leq u\leq \overline{u}.$
\end{lemma}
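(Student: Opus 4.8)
The plan is to prove existence by the classical method of monotone iteration, exploiting precisely the quasimonotonicity (cooperativity) hypothesis $\frac{\partial F_k}{\partial y_j}\le 0$ for $j\ne k$. Since $\bar\Omega$ is compact and every iterate will be forced to take values in the compact region $\mathcal R=\bar\Omega\times\prod_{k}[\min_{\bar\Omega}\underline u_k,\ \max_{\bar\Omega}\bar u_k]$, all first derivatives of $F$ are bounded on $\mathcal R$. I would therefore fix a constant $c>0$ so large that $\frac{\partial F_k}{\partial y_k}-c\le 0$ on $\mathcal R$ for every $k$, and set $G_k(x,y)=F_k(x,y)-c\,y_k$. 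Then $G_k$ is non-increasing in \emph{every} variable $y_1,\dots,y_n$ on $\mathcal R$: in $y_k$ by the choice of $c$, and in $y_j$ ($j\ne k$) by the cooperativity hypothesis. Rewriting the system as $(\Delta_g-c)u_k=G_k(x,u)$ decouples it into $n$ scalar equations once the argument of $G$ is frozen, and the operator $L=\Delta_g-c$ with $c>0$ satisfies the maximum principle on the bounded domain $\Omega$; in particular its solution operator is order-reversing, meaning that $Lv\le Lw$ together with $v\ge w$ on $\partial\Omega$ forces $v\ge w$ in $\Omega$.

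Next I would set up the iteration starting from the supersolution, $u^{(0)}=\bar u$, and for $m\ge 0$ define $u^{(m+1)}$ as the unique solution of the \emph{linear, decoupled} Dirichlet problems $Lu^{(m+1)}_k=G_k(x,u^{(m)})$ in $\Omega$ with $u^{(m+1)}_k=f_k$ on $\partial\Omega$, which is solvable by standard linear elliptic theory since $\partial\Omega$ is smooth and $L$ satisfies the maximum principle. The key claim, proved by induction on $m$, is the two-sided trapping
\[
\underline u \le u^{(m+1)} \le u^{(m)} \le \bar u \quad \text{componentwise.}
\]
For the upper bound, the supersolution inequality $\Delta_g\bar u_k\le F_k(x,\bar u)$ becomes $L\bar u_k\le G_k(x,\bar u)=Lu^{(1)}_k$, and since $\bar u_k\ge f_k$ on $\partial\Omega$ the order-reversing property gives $u^{(1)}\le\bar u$; the inductive step then uses the monotonicity of $G$ together with $u^{(m)}\le u^{(m-1)}$. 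For the lower bound, the subsolution inequality gives $L\underline u_k\ge G_k(x,\underline u)$, and because $G_k$ is non-increasing and $\underline u\le u^{(m)}$ one has $G_k(x,\underline u)\ge G_k(x,u^{(m)})=Lu^{(m+1)}_k$, whence $\underline u\le u^{(m+1)}$. Thus $\{u^{(m)}\}$ decreases monotonically and is bounded below, so it converges pointwise to a limit $u$ with $\underline u\le u\le\bar u$.

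To upgrade pointwise convergence to a genuine solution I would invoke elliptic regularity. The right-hand sides $G_k(x,u^{(m)})$ are uniformly bounded in $C^0(\bar\Omega)$ because the $u^{(m)}$ stay in the compact region $\mathcal R$; $L^p$-estimates then bound $u^{(m)}$ uniformly in $W^{2,p}$, and Sobolev embedding yields uniform $C^{1,\alpha}$ bounds. Feeding this back, $G_k(x,u^{(m)})$ is uniformly bounded in $C^{\alpha}$, so Schauder estimates give uniform $C^{2,\alpha}$ bounds; Arzel\`a--Ascoli together with the monotone pointwise convergence then forces convergence in $C^2_{loc}(\Omega)$. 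The limit therefore satisfies $Lu_k=G_k(x,u)$, that is $\Delta_g u_k=F_k(x,u)$, with $u=f$ on $\partial\Omega$. Since $F$ is smooth, a standard bootstrap ($u\in C^{2,\alpha}\Rightarrow F(\cdot,u)\in C^{2,\alpha}\Rightarrow u\in C^{4,\alpha}\Rightarrow\cdots$) yields $u\in C^\infty(\Omega)$.

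The main obstacle is not the convergence or regularity, which are routine, but arranging the monotone structure so that the iterates remain trapped in $[\underline u,\bar u]$ at every step. Everything hinges on choosing $c$ large enough to turn each $F_k$ into a $G_k$ that is non-increasing in all $n$ variables simultaneously, and this is exactly where the cooperativity hypothesis $\frac{\partial F_k}{\partial y_j}\le 0$ ($j\ne k$) is indispensable: without it the comparison $G_k(x,\underline u)\ge G_k(x,u^{(m)})$ driving the lower-bound induction would fail, and the coupled system could not be treated by a decoupled scalar iteration. One must also check that $c$ can be chosen uniformly, which is guaranteed by the compactness of $\mathcal R$.
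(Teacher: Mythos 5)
The paper does not actually prove this lemma: it is imported verbatim from Li--Mochizuki (Step 1 of the proof of their Proposition 5.2), so there is no internal argument to compare against. Your monotone iteration is the standard proof of the super/subsolution method for cooperative systems, and it is correct in all essentials: shifting by $c$ so that each $G_k=F_k-cy_k$ is non-increasing in every variable, the order-reversing comparison property of $\Delta_g-c$ with $c>0$, the inductive two-sided trapping $\underline u\le u^{(m+1)}\le u^{(m)}\le\overline u$, and the $W^{2,p}$--Schauder bootstrap are exactly the right ingredients, and the cooperativity hypothesis enters precisely where you say it does. Two points would need a sentence in a careful write-up. First, your uniform choice of $c$ requires $\partial F_k/\partial y_k$ to be bounded on $\mathcal R=\bar\Omega\times\prod_k[\min\underline u_k,\max\overline u_k]$, whereas the lemma only assumes $F$ smooth on the open set $\Omega\times\mathbb R^n$; one should either assume the derivatives extend continuously to $\bar\Omega$ (true in the application, where $F_i(x,y)=k_i(x)e^{2y_i-y_{i-1}-y_{i+1}}-i(n-i)$ with $k_i$ smooth on $\bar\Omega$) or run the argument on an exhaustion by compactly contained subdomains. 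Second, the pointwise monotone limit satisfies $\underline u\le u\le u^{(m)}$, which only yields $\limsup_{x\to x_0}u(x)\le f(x_0)$ at the boundary; attainment of the boundary data from below requires equicontinuity of the iterates up to $\partial\Omega$, which follows from standard barrier arguments since the right-hand sides $G_k(x,u^{(m)})$ are uniformly bounded, $\partial\Omega$ is smooth, and the boundary data $f$ is fixed. Neither point affects the validity of the approach.
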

\begin{remark}
    In \cite[Proposition 5.2]{LiMochizuki}, Li-Mochizuki demonstrated the method of super-subsolution for the non-compact manifolds. In the Step 1 of their proof, they showed the method of super-subsolution for the Dirichlet problem.
\end{remark}

Let $\bar{\Omega}$ be a bounded domain of $X$ with smooth boundary. First, we show the existence of solution to Equation (\ref{maineq}) when $k$ is a smooth essentially positive function on $\bar\Omega$ and positive on $\partial \Omega.$
\begin{lemma}\label{step1} Let $\bar{\Omega}$ be a bounded domain of $X$ with smooth boundary.
    Let $k_i$ be smooth functions on $\bar{\Omega}$, $1\leq i\leq n-1$. Suppose $k_i\geq 0$ on $\bar{\Omega}$ and $k_i>0$ on $\partial \Omega$, $1\leq i\leq n-1.$ Then Equation (\ref{maineq}) has a smooth solution $u=(u_1,\cdots,u_{n-1})$ on $\Omega$ such that $\lim\limits_{x\in\partial\Omega}u_i(x)=+\infty$, $1\leq i\leq n-1$.
\end{lemma}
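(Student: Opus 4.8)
The plan is to construct a solution on $\Omega$ whose boundary values blow up to $+\infty$ by exhausting $\Omega$ from the interior with smooth subdomains and solving finite Dirichlet problems with increasingly large boundary data, then extracting a monotone limit. Concretely, fix an increasing sequence of smooth bounded subdomains $\Omega^{(1)}\subset\subset\Omega^{(2)}\subset\subset\cdots$ with $\bigcup_j\Omega^{(j)}=\Omega$, and for each $j$ solve the Dirichlet problem for \eqref{maineq} on $\Omega^{(j)}$ with constant boundary data $u_i\equiv j$ (or some sequence tending to $+\infty$) on $\partial\Omega^{(j)}$. To apply the super-subsolution method (the Li--Mochizuki lemma), note that writing the system as $\Delta_{g_X}u_i=F_i(x,u)$ with $F_i=-i(n-i)+k_ie^{2u_i-u_{i-1}-u_{i+1}}$, one checks $\partial F_i/\partial u_j\le 0$ for $j\ne i$ (the off-diagonal exponents appear with a minus sign), so the lemma applies. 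For the lower barrier I would use a fixed subsolution; here the key point is that since $k_i\ge 0$, the constant tuple $(0,\dots,0)$ or any suitable constant tuple serves as a subsolution on a region where $k_i$ is controlled, and I can enlarge it downward as needed. For the upper barrier I need a supersolution that already blows up at $\partial\Omega$, and this is where the hypothesis $k_i>0$ on $\partial\Omega$ is essential.

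The main obstacle is constructing a supersolution $\overline{u}=(\overline{u}_1,\dots,\overline{u}_{n-1})$ on $\Omega$ that tends to $+\infty$ near $\partial\Omega$, since this provides the uniform-on-compacta upper bound that forces the limit to exist and to blow up at the boundary. Because $k_i>0$ on the compact boundary $\partial\Omega$, there is a neighborhood of $\partial\Omega$ on which $k_i\ge c>0$ for all $i$, and on such a collar I expect to build $\overline{u}_i$ out of a function like $-\log(\mathrm{dist}(x,\partial\Omega))$ plus correction terms: as $x\to\partial\Omega$ the term $k_ie^{2\overline{u}_i-\overline{u}_{i-1}-\overline{u}_{i+1}}$ must dominate $\Delta_{g_X}\overline{u}_i+i(n-i)$, which forces the supersolution inequality. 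Here the diagonal-dominant exponent $2\overline{u}_i-\overline{u}_{i-1}-\overline{u}_{i+1}$ is delicate: I would try taking all $\overline{u}_i$ equal to a common large function $\phi$ near the boundary, so that the exponent collapses to $2\phi-\phi-\phi=0$ in the interior indices and to $2\phi-\phi=\phi$ at $i=1,n-1$ where the Dirichlet boundary term $u_0=u_n=0$ enters; a careful choice of $\phi$ blowing up like $-\log\mathrm{dist}$ should give $k_ie^{(\cdots)}\to+\infty$ faster than the Laplacian term, securing the supersolution property. The coupling and the endpoint asymmetry (the $i=1$ and $i=n-1$ equations see a nonzero boundary contribution) are the technical friction points.

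With barriers in hand, the Li--Mochizuki lemma yields a solution $u^{(j)}$ on $\Omega^{(j)}$ with $\underline{u}\le u^{(j)}\le\overline{u}$ and the prescribed boundary data $j$. By the comparison principle (Lemma \ref{lem:comparisonprincipal}), as $j$ increases the solutions increase monotonically, $u_i^{(j)}\le u_i^{(j+1)}$, since $u^{(j)}$ and $u^{(j+1)}$ are respectively a solution and a supersolution on $\Omega^{(j)}$ with ordered boundary values. The common upper barrier $\overline{u}$ bounds the whole sequence from above on each fixed compact set, so the monotone limit $u_i=\lim_j u_i^{(j)}$ exists pointwise. Standard elliptic interior estimates (Schauder, applied on nested compacta) upgrade this to convergence in $C^\infty_{\mathrm{loc}}(\Omega)$, so the limit $u=(u_1,\dots,u_{n-1})$ is a smooth solution of \eqref{maineq} on $\Omega$.

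Finally I would verify the boundary blow-up $\lim_{x\to\partial\Omega}u_i(x)=+\infty$. The lower bound comes directly from the construction: since the boundary data were taken to be $j\to\infty$ and the comparison principle gives $u\ge u^{(j)}$ on $\Omega^{(j)}$ for each $j$, while near $\partial\Omega$ one compares $u^{(j)}$ against an explicit local subbarrier that stays large in a collar, the limit must diverge. Alternatively, and more cleanly, the supersolution $\overline{u}$ was chosen to blow up at $\partial\Omega$, and a symmetric argument producing a subbarrier blowing up at $\partial\Omega$ (again using $k_i>0$ there) sandwiches $u_i$ from below by a function tending to $+\infty$. I expect the construction of this matching blow-up subbarrier to reuse the same collar-and-$\log$ ansatz as the supersolution, with the inequality direction reversed, and the essential-positivity hypothesis on $\partial\Omega$ to be exactly what makes both barriers available.
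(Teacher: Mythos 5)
Your overall skeleton (monotone family of Dirichlet problems, super/subsolution method via the Li--Mochizuki lemma, comparison principle, Schauder bootstrap) matches the paper's strategy, but the step you yourself flag as the "technical friction point" is where the argument actually breaks. The equal-components ansatz $\overline{u}_1=\cdots=\overline{u}_{n-1}=\phi$ with $\phi\sim-\log\mathrm{dist}$ cannot give a supersolution once $n\geq 4$: for every middle index $2\leq i\leq n-2$ the exponent collapses to $2\phi-\phi-\phi=0$, so the nonlinear term $k_ie^{0}=k_i$ stays bounded while $\Delta_{g_X}\phi\to+\infty$ near $\partial\Omega$, and the required inequality $\Delta_{g_X}\overline{u}_i+i(n-i)-k_i\leq 0$ fails. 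The correct ansatz is the weighted one $\overline{u}_i=\tfrac{i(n-i)}{2}\tilde{w}$: since the discrete second difference of $i(n-i)$ is $-2$, one gets $2\overline{u}_i-\overline{u}_{i-1}-\overline{u}_{i+1}=\tilde{w}$ for \emph{every} $i$, so a single scalar blow-up function $\tilde{w}$ (in the paper, the conformal factor of the complete metric of curvature $-\delta$ on a small disk) controls all components simultaneously. Moreover, the paper does not build one global barrier on $\Omega$ blowing up at $\partial\Omega$ (which is delicate where the $k_i$ vanish in the interior); it uses this weighted barrier only \emph{locally}, on hyperbolic disks $D_d(x)$ contained in a collar $\Omega\setminus\Omega_{\epsilon}$ where $k_i>i(n-i)\delta$, and then transfers the resulting boundary-collar bound to interior points by applying the maximum principle to differences $v^N-v^{N_0}$. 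For the bounded supersolution needed in the Dirichlet step it uses $NC_1^{-1}\varphi$ with $\varphi$ the first eigenfunction of a slightly larger domain, not a constant.

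A second, smaller gap: you exhaust $\Omega$ from the inside and prescribe data $j$ on $\partial\Omega^{(j)}$, then claim $u^{(j)}\leq u^{(j+1)}$ by comparison. But the comparison principle needs $u^{(j+1)}\geq j$ on $\partial\Omega^{(j)}$, and nothing in your construction guarantees that the solution $u^{(j+1)}$ (with data $j+1$ on the \emph{outer} boundary $\partial\Omega^{(j+1)}$) stays above $j$ on the inner curve $\partial\Omega^{(j)}$; indeed your own upper barrier shows these solutions are bounded on compacta independently of $j$, so monotonicity in your setup is false as stated. The paper avoids this entirely by solving all Dirichlet problems on the \emph{same} domain $\Omega$ with boundary data $N$ on the \emph{same} boundary $\partial\Omega$, where monotonicity in $N$ is immediate, and then obtains the boundary blow-up of the limit from $u\geq v^N$ with $v^N\equiv N$ on $\partial\Omega$. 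Your closing suggestion of a subbarrier blowing up to $+\infty$ at $\partial\Omega$ is not needed and is also problematic, since the comparison principle cannot be invoked against a subsolution whose boundary values are infinite.
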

\begin{proof}
    We use the method of super-subsolution to obtain a solution. From the assumption, there is a constant $C_0>1$ such that $|k_i|\leq C_0$ for every $1\leq i\leq n-1$. Then $\underline{u}_i=-\frac{i(n-i)}{2}\ln C_0$, $1\leq i\leq n-1$, gives a subsolution. Now we construct a supersolution. Let $\Omega'$ be a bounded domain in $X$ such that $\bar{\Omega}\subset\Omega'.$ Consider the eigenvalue problem
    \begin{eqnarray*}
        -\Delta_{g_X} u&=&\mu u\quad \text{ on $\Omega'$},\\
        u&=&0\quad \text{ on $\partial\Omega'$}.
    \end{eqnarray*}
    Let $\lambda$ be the first eigenvalue and $\varphi$ be the eigenfunction of $\lambda$. It is well known that $\lambda>0$ and $\varphi$ is smooth and positive on $\Omega'$. Denote $C_1=\min\limits_{\bar{\Omega}}\varphi>0$. For each $N\in\mathbb{N}$, denote $u_i^N=NC_1^{-1}\varphi$. Then suppose $N\geq n(n-1)\lambda^{-1},$ for each $1\leq i\leq n-1,$
    $$\Delta_{g_X}u^N_i+i(n-i)-k_ie^{2u^N_i-u^N_{i-1}-u^N_{i+1}}\leq-\lambda u_i^N+i(n-i)\leq -\lambda N+i(n-i)\leq 0.$$
    Then $u_i^N\geq N$, $1\leq i\leq n-1$, gives a supersolution. It is clearly $u_i^N>\underline{u}_i$. 
    By the method of super-subsolution, the following boundary value problem, for each $1\leq i\leq n-1,$
    \begin{eqnarray}
         \Delta_{g_X}u_i+i(n-i)-k_ie^{2u_i-u_{i-1}-u_{i+1}}&=&0\quad\text{ on $\Omega$},\label{D_r}\\
        u_i&=&N\quad\text{ on $\partial\Omega$}.\nonumber
    \end{eqnarray}
    has a solution $v^N=\{v_1^N,\cdots,v_{n-1}^N\}$ such that for each $1\leq i\leq n-1,$
    $$-\frac{i(n-i)}{2}\ln C_0\leq v_i^N\leq u^N_i.$$
    From the comparison principle Lemma \ref{lem:comparisonprincipal}, we have $v^N$ is unique and increases with respect to $N$. 
    
    Now we show that for each $x_0\in \Omega$ and $i_0\in\{1,\cdots,n-1\}$, the sequence $\{v_{i_0}^N(x_0)\}_{N}$ is bounded. Denote $\Omega_{\epsilon}=\{x\in \Omega:~\text{dist}(x,\partial \Omega)>\epsilon\}.$ Since $k_i>0$ on $\partial\Omega$, $1\leq i\leq n-1$, and $\partial\Omega$ is compact, we may choose constants $\epsilon>0$ and $\delta>0$, such that $k_i>i(n-i)\delta$ on $\Omega\setminus\Omega_{\epsilon}$, $1\leq i\leq n-1$. We assume $\epsilon$ is smaller than the injective radius.

Note that $\Omega\setminus \Omega_{\epsilon}$ is again hyperbolic. There is a unique complete k\"ahler metric on $\Omega\setminus \Omega_{\epsilon}$ of constant curvature $-\delta$.
    
    For $x\in \Omega\setminus\Omega_{\frac{\epsilon}{2}}$, let $d=2\text{dist}(x,\partial \Omega).$ Then the hyperbolic disk $D_{d}(x)$ lies in $\Omega\setminus\Omega_{\epsilon}.$ 
    Regard $D_{d}(x)$ as a subset of the Poincar\'{e} disk model, $D_r=\{z\in \mathbb{C}:~|z|<r\}\subset \mathbb{D}^2$, where $r=\frac{e^d-1}{e^d+1}$ is the Euclidean length corresponding to the hyperbolic length $d$. Consider the complete metric $g_{\delta}$ of curvature $-\delta$ on $D_r$.   Explicitly, $g_{\delta}=\frac{4r^2|dz|^2}{\delta(r^2-|z|^2)^2}$. Let $g_\delta=e^{\tilde{w}}g_{\mathbb{D}}$, then $\tilde{w}=\ln \frac{r^2(1-|z|^2)^2}{\delta(r^2-|z|^2)^2}$. 
    Then $\tilde{w}$ satisfies $\Delta_{g_{\mathbb{D}}}\tilde{w}+2-2\delta e^{\tilde{w}}=0.$ Let $\tilde{w}_i=-\frac{n+1-2i}{2}\tilde{w}$ and $\tilde{u}_i=-\sum\limits_{k=1}^i\tilde{w}_k=\frac{i(n-i)}{2}\tilde{w}$, $1\leq i\leq n-1$. Then $\tilde{u}_i$ satisfies  \begin{eqnarray}\label{eqn r delta}
    \Delta_{g_{\mathbb{D}}}\tilde{u}_i+i(n-i)-i(n-i)\delta e^{2\tilde{u}_i-\tilde{u}_{i-1}-\tilde{u}_{i+1}}=0\quad\text{ on $D_r$}.  
    \end{eqnarray}
    Pulling back Equation (\ref{D_r}) on $D_d(x)$ to $D_r$, since $k_i>i(n-i)\delta$, we have   $$\Delta_{g_{\mathbb{D}}}v^N_i+i(n-i)-i(n-i)\delta e^{2v^N_i-v^N_{i-1}-v^N_{i+1}}\geq 0\quad\text{ on $D_r$}.$$
    Since $\tilde{u}_i$'s are $+\infty$ on $\partial D_r$, from the comparison principle Lemma \ref{lem:comparisonprincipal}, we obtain $v_i^N\leq \tilde{u}_i$. 
    In particular $v^N_i(x)\leq\tilde{u}_i(0)=\frac{i(n-i)}{2}\tilde{w}(0)=\frac{i(n-i)}{2}\ln\frac{1}{\delta r^2},$ $1\leq i\leq n-1$, are bounded with respect to $N$.

    For $x\in \Omega_{\frac{\epsilon}{4}}$, consider $v_{i}^{N_0}$ for a fixed $N_0\geq n(n-1)\lambda^{-1}$. As Equation (\ref{linearization}) in the proof of Lemma \ref{lem:comparisonprincipal}, $v_i^N-v_i^{N_0}$ satisfies the assumptions (a)(b) in the maximum principle Lemma \ref{maximum principle}. So the non-negative maximum of $v_i^N-v_i^{N_0}$ can obtain on the boundary of $\Omega_{\frac{\epsilon}{4}}$, which is bounded with respect to $N$ from the discussion above. So $v_i^N(x)$ is bounded with respect to $N$.

    Therefore, for each $1\leq i\leq n-1$, we have $v_i^N(x)$ is uniformly bounded on $\Omega_{\epsilon}$, and is increasingly convergent to a function $u_i(x)$ on $\Omega$. 
    From the Schauder theory and the bootstrap argument, $u_i$ is a smooth solution to Equation (\ref{maineq}) on $\Omega$.
    And it is clearly that $u_i$  approaches to $+\infty$ on $\partial \Omega.$ We finish the proof.
\end{proof}

Furthermore, we can always find the maximal solution under the assumptions in Lemma \ref{step1} from the following lemma.
\begin{lemma}\label{step2}
    Under the assumptions in Lemma \ref{step1}, Equation (\ref{maineq}) has a unique smooth maximal solution $v=(v_1,\cdots,v_{n-1})$ on $\Omega$ such that $\lim\limits_{x\in\partial\Omega}v_i(x)=+\infty$, $1\leq i\leq n-1$.
\end{lemma}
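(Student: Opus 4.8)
The plan is to construct the maximal solution by combining the existence statement from Lemma \ref{step1} with the comparison principle and a monotonicity argument. Lemma \ref{step1} already produces \emph{some} smooth solution $u=(u_1,\dots,u_{n-1})$ on $\Omega$ with the prescribed blow-up $u_i\to+\infty$ on $\partial\Omega$; the task is now to single out the largest such solution and to verify it is genuinely maximal among all solutions, not merely among those with the same boundary behavior.

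First I would set up the following scheme. For each $N\in\mathbb N$, let $v^N=(v^N_1,\dots,v^N_{n-1})$ denote the unique solution to the Dirichlet problem \eqref{D_r} with constant boundary value $N$ that was constructed in the proof of Lemma \ref{step1}. By the comparison principle (Lemma \ref{lem:comparisonprincipal}), the family $\{v^N\}_N$ is monotonically increasing in $N$ componentwise, and the interior estimates established in Lemma \ref{step1}, obtained by comparing against the complete metric of curvature $-\delta$ on small Poincar\'e disks near $\partial\Omega$ and propagating the bound inward via the maximum principle, show that $v^N_i(x)$ is locally uniformly bounded above on $\Omega$. Hence $v_i=\lim_{N\to\infty}v^N_i$ exists, and by Schauder theory with the usual bootstrap it is a smooth solution of \eqref{maineq} on $\Omega$ with $v_i\to+\infty$ on $\partial\Omega$.

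Next I would prove maximality. Let $w=(w_1,\dots,w_{n-1})$ be any smooth solution of \eqref{maineq} on $\Omega$; I must show $w_i\le v_i$ for every $i$. The key point is that $w$, being continuous on the compact closure of $\Omega_\epsilon$ for each $\epsilon$, is bounded on $\bar\Omega_\epsilon$, while for $N$ large enough the boundary blow-up of $v^N$ (its boundary value is the constant $N$, taken on $\partial\Omega$, and by construction $v^N$ exceeds any fixed bound near $\partial\Omega$ once $N$ is large) dominates $w$ on $\partial\Omega_\epsilon$. Applying the comparison principle on the bounded domain $\Omega_\epsilon$ with $w$ as subsolution and $v^N$ as supersolution (both are in fact solutions), and choosing $N$ so that $v^N_i\ge w_i$ on $\partial\Omega_\epsilon$, yields $w_i\le v^N_i\le v_i$ on $\Omega_\epsilon$. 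Letting $\epsilon\to 0$ gives $w_i\le v_i$ on all of $\Omega$, establishing that $v$ is the maximal solution. Uniqueness of the maximal solution is immediate from its defining property.

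The main obstacle I anticipate is the boundary-comparison step: to apply Lemma \ref{lem:comparisonprincipal} on $\Omega_\epsilon$ I need, for each fixed $\epsilon$, to guarantee that the limit solution $v$ (or some $v^N$) dominates an arbitrary competing solution $w$ on the interior boundary $\partial\Omega_\epsilon$. Because $w$ is merely a solution on the open set $\Omega$ with no control near $\partial\Omega$, the argument must be organized so that the comparison is always performed on the \emph{shrunken} domain $\Omega_\epsilon$, where $w$ is automatically bounded and $v^N$ can be made arbitrarily large near $\partial\Omega$ by taking $N\to\infty$; making this quantitative requires the local upper bound from Lemma \ref{step1} to hold uniformly on each $\bar\Omega_\epsilon$, which it does, but one must be careful that the bound does not degenerate as $\epsilon$ is decreased while $N$ is increased. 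Handling this interchange of limits cleanly, together with verifying that the comparison principle's hypotheses (the sign condition on the off-diagonal coefficients $c_i$ and the structure conditions feeding into Lemma \ref{maximum principle}) are met on each $\Omega_\epsilon$, is where the real care is needed.
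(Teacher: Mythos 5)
Your construction of the candidate solution is just a restatement of Lemma \ref{step1}: the increasing limit $v=\lim_{N\to\infty}v^N$ of the Dirichlet solutions with boundary value $N$ is exactly the solution $u^0$ already produced there. The genuine content of Lemma \ref{step2} is the maximality, and your argument for it contains a false step. You claim that for fixed $\epsilon>0$ and $N$ large, $v^N$ dominates an arbitrary competing solution $w$ on $\partial\Omega_\epsilon$ because ``$v^N$ exceeds any fixed bound near $\partial\Omega$ once $N$ is large.'' This is precisely what the interior estimate in the proof of Lemma \ref{step1} rules out: for every fixed $x\in\Omega$ (in particular for $x\in\partial\Omega_\epsilon$), $v^N_i(x)$ is bounded \emph{uniformly in $N$} by comparison with the metric of curvature $-\delta$ on small hyperbolic disks. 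So $\sup_N\sup_{\partial\Omega_\epsilon}v^N_i<\infty$, and there is no reason this bound exceeds $\sup_{\partial\Omega_\epsilon}w_i$; the only way to guarantee it would be to already know $u^0\geq w$ on $\partial\Omega_\epsilon$, which is circular. The paper is explicit that the maximal solution may differ from $u^0$.

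The paper's route avoids this by comparing against objects that genuinely blow up on the boundary of the shrunken domain: for each small $\epsilon>0$ it takes the blow-up solution $u^\epsilon$ on $\Omega_\epsilon$ furnished by Lemma \ref{step1} (applicable since $k_i>0$ on $\partial\Omega_\epsilon$ for small $\epsilon$), notes via the comparison principle that $u^\epsilon$ decreases as $\epsilon\downarrow 0$ (and is bounded below by the constant subsolution), and defines $v=\lim_{\epsilon\to 0}u^\epsilon$. Maximality is then immediate: any subsolution $v'$ on $\Omega$ is finite on $\partial\Omega_\epsilon$ while $u^\epsilon=+\infty$ there, so Lemma \ref{lem:comparisonprincipal} gives $v'\leq u^\epsilon$ on $\Omega_\epsilon$, and letting $\epsilon\to 0$ yields $v'\leq v$. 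Note also that maximality in this paper means dominating all \emph{subsolutions}, not merely all solutions, which your argument does not address but which the paper's argument handles at no extra cost. To repair your proof you would need to replace the family $\{v^N\}$ by the family $\{u^\epsilon\}$ of boundary-blow-up solutions on the shrunken domains.
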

\begin{proof}
    We use the notation in the proof of Lemma \ref{step1}. For $\epsilon\geq 0$ small enough, let $u^{\epsilon}_i$, $1\leq i\leq n-1$, be a solution of Equation (\ref{maineq}) on $\Omega_{\epsilon}$ obtained from Lemma \ref{step1}. From the construction, for $\epsilon\geq 0$, we have $u_i^{\epsilon}\geq -\frac{i(n-i)}{2}\ln C_0$. From the comparison principle Lemma \ref{lem:comparisonprincipal}, for fixed $x\in\Omega$ and $i\in \{1,\cdots, n-1\}$, $u^{\epsilon}_i(x)$ decreases as $\epsilon$ decreases to $0$. 
    Set $v_i(x)=\lim\limits_{\epsilon \rightarrow 0}u_i^{\epsilon}(x).$ 
    (Notice that $v_i(x)$ may be different from $u^0_i(x)$.) From the Schauder theory and the bootstrap argument, $v_i$ is a smooth solution of Equation (\ref{maineq}) on $\Omega.$ For the boundary value, we have 
    $$\lim\limits_{x\rightarrow \partial\Omega}v_i(x)\geq \lim\limits_{x\rightarrow \partial\Omega}u^0_i(x)=+\infty.$$

    To see $v_i$ is the maximal solution, let $v'_i$ be another subsolution. From the comparison principle Lemma \ref{lem:comparisonprincipal}, we have $v'_i(x)\leq u^{\epsilon}_i(x)$ for all $\epsilon>0$ and $x\in \Omega_{\epsilon}.$ 
    Then for fixed $x\in \Omega$, letting $\epsilon\rightarrow 0$, 
    we obtain $$v'_i(x)\leq \lim\limits_{\epsilon\rightarrow 0} u^{\epsilon}_i(x)=v_i(x).$$
    We finish the proof.
\end{proof}
Now we are in the position to prove Proposition \ref{prop:main1}.
\begin{proof}[Proof of Proposition \ref{prop:main1}]
From the assumption, we may choose a bounded smooth exhaustion $\{\Omega_i\}_{i=1}^\infty$ of $X$ such that $k_i>0$, $i=1,\cdots,n-1,$ on $\partial \Omega_j,$ $\forall j.$ Let $\underline{u}=(\underline{u}_1,\cdots,\underline{u}_{n-1})$ be the subsolution. Then restricting to $\Omega_i$ $\underline{u}$ is also a subsolution. From Lemma \ref{step2}, there is a maximal solution $u^{(i)}$ on $\Omega_i$ with $\lim\limits_{x\in\partial\Omega}u^{(i)}(x)=+\infty.$ Then from the comparison principle Lemma \ref{lem:comparisonprincipal}, for each $j=1,\cdots,n-1$ and $x\in X$, $u^{(i)}_j(x)$ is decreasing with respect to $i$ ($i$ large enough), and $u^{(i)}_j(x)>\underline{u}_j(x)$. Set $$u_j(x)=\lim\limits_{i\rightarrow \infty}u_j^{(i)}(x),~j=1,\cdots,n-1, ~\forall x\in X.$$
From the Schauder theory and the bootstrap argument, $u=(u_1,\cdots,u_{n-1})$ is a smooth solution to Equation (\ref{maineq}) on $X$.

Now we show that $u$ is the maximal solution. Let $v$ be another subsolution to Equation (\ref{maineq}). For $x\in X$, suppose $x\in \Omega_{i}$ for $i\geq i_0$. Since $u^{(i)}$ is the maximal solution on $\Omega_i$, we have $u^{(i)}(x)\geq v(x)$. Then 
$$u(x)=\lim\limits_{i\rightarrow \infty}u^{(i)}(x)\geq v(x).$$
We finish the proof of $u$ being maximal.

At last, we show the ``moreover" part. We apply Lemma \ref{lem:comparisonprincipal} to the subsolution $\underline u$ and the maximal solution $u$ to each bounded domain $\Omega$. Then we have on $\Omega$, either \(\underline{u}_i < u_i\) for every \(i = 1, \ldots, n-1\), or \(\underline{u}_i \equiv u_i\) for every \(i = 1, \ldots, n-1\). Since $\Omega$ is arbitrary, we obtain the claim on $X$.
\end{proof}
Proposition \ref{prop:main1} shows the existence of the maximal solution. Now we prove Proposition \ref{prop:main2}, i.e. for the base $n$-Fuchsian case, the solution given by the hyperbolic metric is just the maximal solution.
\begin{proof}[Proof of Proposition \ref{prop:main2}]
   By lifting, we only need to show the proposition in the case $X=\mathbb{D}$. Let $u$ be a solution to Equation (\ref{eqn4}). Denote $D_r=\{z\in \mathbb{C}:~|z|<r\}$. For $r<1$, restricting on $D_r$, $u$ also satisfies Equation (\ref{eqn4}). Fix $z_0\in \mathbb{D}$. Suppose $r$ approaches to $1$ enough such that $z_0\in D_r.$ Consider the complete hyperbolic metric $g_r$ on $D_r$, $g_r=\frac{4r^2|dz|^2}{(r^2-|z|^2)^2}$. With respect to the background metric $g_X$, $g_r=e^{\ln \frac{r^2(1-|z|^2)}{(r^2-|z|^2)^2}}g_X$. As in Equation (\ref{eqn r delta}), $\tilde{u}_i^r=\frac{i(n-i)}{2}\ln \frac{r^2(1-|z|^2)}{(r^2-|z|^2)^2}$ is a solution to Equation (\ref{eqn4}) on $D_r$. Then by the comparison principle Lemma \ref{lem:comparisonprincipal}, \[u_i\leq \tilde u_i^r.\] Letting $r\rightarrow 1$, we obtain $u_i\leq 0$.
\end{proof}

\section{Main theorems and the proof}\label{sec:ProofofMainTheorem}


\begin{definition}
On a generically regular nilpotent Higgs bundle $(E,\theta)$, we say that a harmonic metric $h$ is maximal if it weakly dominates all other harmonic metrics. 
\end{definition}

In this section, we show the following theorem. 
\begin{theorem}\label{main theoremLater}
Let $(E,\theta)$ be a generically regular nilpotent Higgs bundle over a hyperbolic Riemann surface $X$. Suppose there is a harmonic metric $h$ on $(E,\theta)$. Then there uniquely exists a maximal harmonic metric $h_{max}$ on the graded Higgs bundle $\text{Gr}_\mathcal{G}(E,\theta)$ satisfying (1) $\det(h_{max})=\det(h)$, and (2) $h_{max}$ strictly weakly dominates $h$, unless $(E,\theta)$ is a CVHS of type $(1,\cdots,1)$ and $h=h_{\max}$. 

Furthermore, $h_{max}$ is diagonal, that is, $h_{max}=\oplus_{i=1}^nh_{max}|_{Gr_iE}$.
\end{theorem}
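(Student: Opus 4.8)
The plan is to reduce the theorem entirely to the scalar system \eqref{maineq} of Section \ref{Kalka-Yang}, solve it there via Proposition \ref{prop:main1}, and then translate the maximal solution back into a diagonal harmonic metric on $\mathrm{Gr}_{\mathcal G}(E,\theta)$. First I would fix the background metric $\tilde h=\mathrm{diag}(\tilde h_1,\dots,\tilde h_n)$ on the graded bundle as in Section \ref{harmonic metrics and main theorem}, normalized so that $\sqrt{-1}F(\nabla_{\tilde h_i})=\tfrac{n+1-2i}{2}\omega_X$ and $\det\tilde h=\det h$, and set $k_i=2\|\gamma_i\|_{\tilde h,g_X}^2$ with $\gamma_i=\mathrm{Gr}_i\theta$. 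A preliminary point is that $(k_1,\dots,k_{n-1})$ is simultaneously essentially positive in the sense of Definition \ref{essentially positive}: each $\gamma_i$ is a nonzero holomorphic section, so its zero locus is discrete; the finite union of these loci is discrete, and one can choose an exhaustion by smooth bounded domains whose boundaries avoid it. Throughout, the partial order being maximized forces a fixed determinant (scaling a harmonic metric by a constant preserves harmonicity but changes $\det(\cdot|_{G_0})$), so all competing metrics are taken of determinant $\det h$.

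Next, Proposition \ref{prop 1}(1) applied to the given harmonic metric $h$ on $(E,\theta)$ produces the tuple $\underline u=(u_1,\dots,u_{n-1})$, $u_i=-\sum_{k\le i}w_k$, satisfying \eqref{eqn3}; this is exactly a subsolution of \eqref{maineq} for the above $k_i$. Proposition \ref{prop:main1} then yields a maximal solution $u^{\max}=(u_1^{\max},\dots,u_{n-1}^{\max})$. From it I recover $w_i^{\max}=u_{i-1}^{\max}-u_i^{\max}$ (with $u_0^{\max}=u_n^{\max}=0$) and define the diagonal metric $h_{\max}=\mathrm{diag}(e^{w_1^{\max}}\tilde h_1,\dots,e^{w_n^{\max}}\tilde h_n)$ on $\mathrm{Gr}_{\mathcal G}(E,\theta)$. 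Since $\sum_i w_i^{\max}=-u_n^{\max}=0$ we obtain $\det h_{\max}=\det\tilde h=\det h$, which is condition (1), and the diagonal form is built into the construction.

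The central step is the dictionary between the partial order on metrics and the pointwise order on solutions. For a diagonal metric on a CVHS of type $(1,\dots,1)$ the Hitchin equation is block diagonal, and its partial traces over $E/G_k^0$ reproduce exactly \eqref{eqn2}; running Proposition \ref{prop 1}(2) in reverse shows that $u^{\max}$ being an honest solution is equivalent to $h_{\max}$ being harmonic. For the ordering, a direct computation in the smooth decomposition gives $\log\det(h'|_{G_k^0})=u'_k+c_k$ with $c_k$ independent of the metric, so for two metrics of determinant $\det h$ the relation $u^{\max}\ge u'$ pointwise is equivalent to $\det(h_{\max}|_{G_k})\ge\det(h'|_{G_k^0})$ for all $k$. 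Applying Proposition \ref{prop 1}(1) to an arbitrary harmonic metric $h'$ on $\mathrm{Gr}_{\mathcal G}(E,\theta)$ of determinant $\det h$ shows its associated tuple $u'$ is a subsolution, hence dominated by $u^{\max}$; thus $h_{\max}$ weakly dominates every harmonic metric, and uniqueness follows from uniqueness of the maximal solution.

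Finally, for condition (2) and the ``unless'' clause I would invoke the dichotomy in the ``moreover'' part of Proposition \ref{prop:main1} applied to $\underline u\le u^{\max}$: either $u_i^{\max}>\underline u_i$ for all $i=1,\dots,n-1$, or $u^{\max}\equiv\underline u$. In the first case, combined with the equality $\det(h_{\max}|_{G_0})=\det h=\det(h|_{G_0})$ at $k=0$, this is precisely strict weak domination in the sense of Definition \ref{weakly dominategen}. In the second case $\underline u$ is itself a solution, so equality holds throughout \eqref{eqn3}; by the equality discussion in the proof of Proposition \ref{prop 1} (equality for every $k$ forces $\beta=0$ and all $a_{ij}=0$) the smooth decomposition is holomorphic and $\theta$ is purely graded, so $(E,\theta)$ is a CVHS and $h=h_{\max}$, which is the excluded case. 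I expect the main obstacle to be the reverse direction of the metric--solution dictionary, namely verifying that the full, a priori matrix-valued, Hitchin equation for the diagonal $h_{\max}$ collapses exactly to the scalar system \eqref{eqn2}, together with the careful equality analysis that isolates the exceptional CVHS case.
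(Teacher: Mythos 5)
Your proposal is correct and follows essentially the same route as the paper: apply Proposition \ref{prop 1}(1) to the given harmonic metric to produce a subsolution of the system \eqref{maineq} with $k_i=2\|\gamma_i\|_{\tilde h,g_X}^2$ (simultaneously essentially positive since the $\gamma_i$ have isolated zeros), invoke Proposition \ref{prop:main1} to get the maximal solution, and translate it back into a diagonal harmonic metric via Proposition \ref{prop 1}(2), with maximality over arbitrary harmonic metrics obtained by running Proposition \ref{prop 1}(1) on them and the strict-domination/CVHS dichotomy coming from the ``moreover'' clause of Proposition \ref{prop:main1} together with the equality analysis ($\beta=0$, $a_{ij}=0$) in the proof of Proposition \ref{prop 1}. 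The extra details you supply (the $\log\det(h|_{G_k})=u_k+c_k$ dictionary and the reverse implication from \eqref{eqn2} to harmonicity of a diagonal metric) are exactly the points the paper uses implicitly.
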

\begin{proof}
Write $(E,\bar\partial_E,\theta)$ in terms of the smooth decomposition $E=\oplus_{i=1}^nL_i$ with respect to $h$ as in Section \ref{generically regular} so that $h=\oplus_i h|_{L_i}$ and $G_i=\oplus_{k\geq i}L_i$ (smooth direct sum). 

Note that $L_i\cong G_i/G_{i+1}$. The graded Higgs bundle of $(E,\bar\partial_E, \theta)$ with respect to $\mathcal G$ is isomorphic to \[E_0=\oplus_{i=1}^nL_i, \quad \bar\partial_{E_0}= \oplus_{i=1}^n\bar\partial_{L_i},\quad \theta_0=\sum_{i=1}^{n-1}\gamma_i.\]  The metric $h$ on $E$ induces a natural metric, still denoted by $h$, on $E_0$ under the canonical smooth isomorphism $E\cong E_0$. Let $\tilde h$ be a background diagonal Hermitian metric on $E$ satisfying $\det(\tilde h)=\det(h)$. 

Write $h|_{L_i}=e^{w_i}\cdot \tilde h|_{L_i}$ and $u_i=-\sum_{k=1}^iw_k$. By Proposition \ref{prop 1}, we obtain 
\begin{equation*}
    \Delta_{g_X} u_i + i(n-i) - 2||\gamma_i||_{\tilde{h}, g_X}^2 e^{2u_i - u_{i-1} - u_{i+1}} \geq 0, \quad i = 1, \ldots, n-1, \quad u_0 = u_n = 0.
\end{equation*}

Since for every $i=1,\cdots,n-1$, $\gamma_i$ is holomorphic and not identically $0$, we see that $\gamma_i$ has isolated zeros.
Then the tuple $(||\gamma_1||_{\tilde h, g_X}^2,\cdots,||\gamma_{n-1}||_{\tilde h, g_X}^2)$ is simultaneously essentially positive in the sense of Definition \ref{essentially positive}. Thus one may apply Proposition \ref{prop:main1} and obtain there is a maximal solution to \begin{equation*}
    \Delta_{g_X} u_i + i(n-i) - 2||\gamma_i||_{\tilde{h}, g_X}^2 e^{2u_i - u_{i-1} - u_{i+1}}=0, \quad i = 1, \ldots, n-1, \quad u_0 = u_n = 0.
  \end{equation*}
From Proposition \ref{prop 1}, the maximal solution to the above equation gives a harmonic metric $\hat h$, which is diagonal, on the graded Higgs bundle satisfying $\det(\hat h)=\det(\tilde h)=\det(h)$.  

By the definition of $u_i$, we obtain $\hat h$ strictly weakly dominates $h.$

On $Gr_{\mathcal G}(E,\theta)$, given any (not necessarily diagonal) harmonic metric $h$, we can apply the above argument. And thus obtain the above $\hat h$ weakly dominates $h.$ 
Therefore, $\hat h$ is maximal among all harmonic metrics with the same determinant on the graded Higgs bundle. 

Thus, $\hat h$ is the desired $h_{max}$ in the statement.
\end{proof}

\begin{theorem}\label{main theoremLater1}
The metric $h_X=\oplus_{i=1}^n \hat g_X^{-\frac{n+1-2i}{2}}$ is the maximal harmonic metric on the base $n$-Fuchsian Higgs bundle.
\end{theorem}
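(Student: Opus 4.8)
The goal is to show that the explicit diagonal metric $h_X = \oplus_{i=1}^n \hat g_X^{-\frac{n+1-2i}{2}}$ is maximal \emph{among all} harmonic metrics on the base $n$-Fuchsian Higgs bundle that share its determinant, not merely among diagonal ones. The plan is to combine the two main inputs already established: Proposition \ref{prop 1}, which converts an arbitrary harmonic metric into a supersolution of the determinant system, and Proposition \ref{prop:main2}, which identifies $(0,\dots,0)$ as the maximal solution of the base $n$-Fuchsian system \eqref{eqn4}.

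First I would observe that, by the computation in Section \ref{subsection:Fuchsian}, the choice $\tilde h_i = \hat g_X^{-\frac{n+1-2i}{2}}$ together with $\gamma_i = \sqrt{i(n-i)/2}$ makes $\|\gamma_i\|_{\tilde h, g_X}^2 = i(n-i)/2$, so the system \eqref{eqn2}--\eqref{eqn3} specializes exactly to \eqref{eqn4} with $k_i = i(n-i)$, and $h_X$ itself corresponds to the solution $u = (0,\dots,0)$. Next, let $h$ be \emph{any} harmonic metric on the base $n$-Fuchsian bundle with $\det(h) = \det(h_X) = \det(\tilde h)$. Writing $h|_{L_i} = e^{w_i}\tilde h_i$ and $u_i = -\sum_{k=1}^i w_k$ as in Section \ref{harmonic metrics and main theorem} (here $L_i$ is the $h$-orthogonal decomposition, which need not agree with the holomorphic splitting since $h$ is not assumed diagonal), Proposition \ref{prop 1} yields that $(u_1,\dots,u_{n-1})$ is a subsolution of \eqref{eqn4}:
\begin{equation*}
    \Delta_{g_X} u_i + i(n-i) - i(n-i)\, e^{2u_i - u_{i-1} - u_{i+1}} \geq 0, \quad i = 1,\dots,n-1,\quad u_0 = u_n = 0.
\end{equation*}

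By Proposition \ref{prop:main2}, the zero tuple is the maximal solution, hence dominates every subsolution, giving $u_i \leq 0$ for all $i$. Unwinding the definition $u_i = -\sum_{k=1}^i w_k = -\log \det(h|_{G_{i-1}^\perp})$ relative to the background, this translates into $\det(h|_{G_k}) \leq \det(h_X|_{G_k})$ for each $k = 0,\dots,n-1$ (using $u_0 = u_n = 0$ to pin down the total determinant), which is precisely the statement that $h_X$ weakly dominates $h$ in the sense of Definition \ref{weakly dominate}. Since $h$ was an arbitrary harmonic metric of the prescribed determinant, $h_X$ is maximal. The main subtlety to handle carefully is \textbf{not} any hard analysis — both Propositions are assumed — but rather the bookkeeping translating the partial-order inequality $u_i \le 0$ back into the determinant-dominance condition on the $\det(G_k)$, and in particular verifying that Proposition \ref{prop 1} applies to a \emph{non-diagonal} harmonic metric; that is exactly why one gets only the inequality (subsolution) in \eqref{eqn3} rather than the equality, which is what makes the maximality meaningful across all harmonic metrics rather than just diagonal ones.
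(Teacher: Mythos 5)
Your proposal is correct and follows essentially the same route as the paper: the paper's proof simply cites the computation of Section \ref{subsection:Fuchsian} (which shows $h_X$ corresponds to the zero solution of \eqref{eqn4} and that an arbitrary harmonic metric of the same determinant yields, via Proposition \ref{prop 1}, a subsolution of \eqref{eqn4}) together with Proposition \ref{prop:main2}. Your write-up merely makes explicit the bookkeeping the paper leaves implicit, and apart from a harmless index slip in identifying $u_i$ with $-\log\bigl(\det(h|_{E/G_i})/\det(\tilde h|_{E/G_i})\bigr)$, the translation of $u_i\le 0$ into weak dominance is exactly as intended.
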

\begin{proof} The statement directly follows from Section \ref{subsection:Fuchsian} and Proposition \ref{prop:main2}.
\end{proof}

\section{Harmonic metrics on  CVHS}\label{sec:non-existence}
Given a generically regular nilpotent Higgs bundle \((E, \theta)\), Theorem \ref{main theoremLater} establishes that the absence of a harmonic metric on the graded bundle \(\text{Gr}_{\mathcal{G}}(E, \theta)\) implies the nonexistence of a harmonic metric on \((E, \theta)\). We discuss the existence of harmonic metrics on a CVHS of type \((1, \ldots, 1)\) (see Definition \ref{complex variation of Hodge structure}).

\textbf{In this whole section, we always assume $(E,\theta)$ is a CVHS of type $(1,\cdots, 1)$.}

Let \(\Sigma\) be a compact Riemann surface of genus at least 2, with \(\pi: \Sigma_1 \rightarrow \Sigma\) is a holomorphic covering.
\begin{lemma}\label{lem:ExistenceLift}
 Given $(E,\theta)$ on $\Sigma$ and its lift $\pi^*(E,\theta)$ to $\Sigma_1$, then $(E,\theta)$ admits a harmonic metric if and only if  $\pi^*(E,\theta)$ admits a harmonic metric. 

In this case, both $(E,\theta)$ and $\pi^*(E,\theta)$ admit diagonal harmonic metrics.
\end{lemma}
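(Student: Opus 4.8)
The plan is to treat the two implications separately. The forward one is essentially formal: if $h$ is a harmonic metric on $(E,\theta)$ over $\Sigma$, then $\pi^*h$ is a harmonic metric on $\pi^*(E,\theta)$ over $\Sigma_1$, because the Hitchin equation \eqref{Hit eqn} is an identity between $(1,1)$-forms, and the Chern connection, its curvature, and the adjoint $\theta^{*_h}$ all commute with the holomorphic pullback $\pi^*$; if $h$ is diagonal for the Hodge decomposition, so is $\pi^*h$. Thus all the content is in the reverse implication, which I would prove by lifting everything to the universal cover and exploiting the uniqueness in Theorem \ref{main theoremLater}.

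Let $p:\mathbb{D}\to\Sigma$ be the universal covering with deck group $\Gamma=\pi_1(\Sigma)$, let $\Gamma_1\le\Gamma$ satisfy $\Sigma_1=\mathbb{D}/\Gamma_1$, so that $p=\pi\circ p_1$ with $p_1:\mathbb{D}\to\Sigma_1$, and write $(\widetilde E,\widetilde\theta)=p^*(E,\theta)$. This is again a CVHS of type $(1,\dots,1)$ over the non-compact hyperbolic surface $\mathbb{D}$, it carries a tautological $\Gamma$-action by holomorphic Higgs-bundle automorphisms, and its canonical filtration $\{G_k\}$ — defined by $\ker\widetilde\theta^{\,n-k}$ — is $\Gamma$-invariant. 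Harmonic metrics on $(E,\theta)$ over $\Sigma$ (resp. on $\pi^*(E,\theta)$ over $\Sigma_1$) correspond exactly to $\Gamma$-invariant (resp. $\Gamma_1$-invariant) harmonic metrics on $(\widetilde E,\widetilde\theta)$, so the reverse implication becomes: a $\Gamma_1$-invariant harmonic metric must produce a $\Gamma$-invariant one.

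Given a $\Gamma_1$-invariant harmonic metric $H$ on $\mathbb{D}$, I would run Theorem \ref{main theoremLater} on $\mathbb{D}$: by Proposition \ref{prop 1} the metric $H$ yields a subsolution of the system \eqref{maineq}, and Proposition \ref{prop:main1} then produces the unique maximal diagonal harmonic metric $H_{\max}$ of a prescribed determinant. The decisive point is invariance: for each $g\in\Gamma$ the pulled-back metric $g^*H_{\max}$ is again a diagonal harmonic metric, and since $g$ preserves $\widetilde\theta$ and the canonical filtration, it again weakly dominates every harmonic metric of the same determinant, i.e. it is again maximal; by the uniqueness in Theorem \ref{main theoremLater} we get $g^*H_{\max}=H_{\max}$, \emph{provided the fixed determinant is itself $\Gamma$-invariant}. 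Then $H_{\max}$ descends to a diagonal harmonic metric on $(E,\theta)$ over $\Sigma$, whose pullback is a diagonal harmonic metric on $\Sigma_1$; this also settles the final ``diagonal'' assertion in both directions.

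The main obstacle is exactly this determinant bookkeeping, and this is where compactness of the cover enters. Since $\Sigma_1$ is compact, integrating the trace of the Hitchin equation for the descended metric forces $\deg\pi^*E=0$, hence $\deg E=0$, so $\det E$ carries a flat metric over $\Sigma$; its pullback $D$ is a $\Gamma$-invariant flat metric on $\det\widetilde E$, which I take as the fixed determinant and background. To feed Theorem \ref{main theoremLater} I must still exhibit a subsolution with determinant $D$: writing $D=e^{2\psi}\det H$ with $\psi$ harmonic (the ratio of two flat metrics on the holomorphically trivial line bundle $\det\widetilde E$ over $\mathbb{D}$), the uniformly rescaled metric $e^{2\psi/n}H$ has determinant $D$ and — because a pluriharmonic conformal factor changes neither $\|\gamma_k\|^2$ nor the curvature of $\det(E/G_k)$ — satisfies exactly the same inequalities \eqref{supersolution global fil eqn}, hence is still a subsolution. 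With this $\Gamma$-invariant $D$ the uniqueness argument above applies verbatim. (The same observation shows that the traceless, determinant-free data of $H_{\max}$, governed by the coupled Toda system \eqref{maineq}, is automatically $\Gamma$-invariant; the only genuinely global input is the degree-zero condition securing a $\Gamma$-invariant determinant, which is precisely why the compactness of $\Sigma_1$ is needed.)
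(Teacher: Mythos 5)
Your overall strategy coincides with the paper's: reduce to the universal cover, invoke the uniqueness of the maximal harmonic metric from Theorem \ref{main theoremLater} to force invariance under the deck group, and descend. The paper's proof is exactly this, stated in a few lines, and it does not discuss the determinant normalization at all; your observation that the maximal metric is unique only once its determinant is fixed, so that the identity $g^*H_{\max}=H_{\max}$ requires a $\Gamma$-invariant flat determinant, is a genuine refinement, and your pluriharmonic rescaling $e^{2\psi/n}H$ (which preserves the inequalities \eqref{supersolution global fil eqn} and hence the subsolution property) is the right way to move between determinants.

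The gap is in how you secure $\deg E=0$. You write ``Since $\Sigma_1$ is compact\dots'', but $\Sigma_1$ is an arbitrary holomorphic covering of $\Sigma$; in the paper's only application of this lemma (Proposition \ref{prop:NotInA}) it is the universal cover $\mathbb{D}$, and the paper's own first reduction is precisely to the case $\Sigma_1=\mathbb{D}$. For an infinite cover you cannot integrate the trace of the Hitchin equation over $\Sigma_1$, so $\deg\pi^*E=0$ is not available, and without $\deg E=0$ there is no $\Gamma$-invariant flat metric on $\pi^*\det E$ to normalize against; your uniqueness argument then produces only the cocycle identity $g^*H_{\max}=e^{\psi_g/n}H_{\max}$ rather than a fixed point. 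So as written your proof covers finite covers only, which is exactly the case the paper does not need. The obstruction you have isolated is real rather than cosmetic: for example $E=K_\Sigma\oplus\mathcal{O}$ with $\gamma_1=\mathrm{id}$ is a CVHS of type $(1,1)$ with $\deg E=2g-2\neq 0$, so it admits no harmonic metric over the compact $\Sigma$, yet its pullback to $\mathbb{D}$ is (up to constants) the Fuchsian example of Section \ref{subsection:Fuchsian} and does admit one; hence the degree-zero condition cannot be deduced from the existence of a harmonic metric upstairs when the cover is infinite, and must be supplied as a hypothesis (as it is, implicitly, in the paper's application to $E=L\oplus L^{-1}$).
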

\begin{proof}
It is clear that one only needs to show the case that $\pi$ is a universal covering map.

The ``only if" direction is obvious. 

We only need to consider the ``if" direction. Suppose that a harmonic metric exists on \(\pi^*(E, \theta)\). Since \(\pi^*(E, \theta)\) is a CVHS of type \((1, \cdots, 1)\), it possesses a unique maximal solution \(h\) as guaranteed by Theorem \ref{main theoremLater}. Given that \(\pi^*(E, \theta)\) is \(\pi_1(\Sigma)\)-invariant, the uniqueness of the maximal harmonic metric implies that \(h\) must also be \(\pi_1(\Sigma)\)-invariant, so it descends to a harmonic metric on $(E,\theta)$. 

The last statement follows from the fact that the maximal harmonic metric is diagonal.
\end{proof}

Let $\mathbb D$ denote the Poincaré disk. A CVHS of type \((1, \ldots, 1)\) over $\mathbb D$ is given by:
\[
   (E, \theta) = \left(\mathcal{O} \oplus \cdots \oplus \mathcal{O},\quad 
   \begin{pmatrix}
       0 & & & \\
       \gamma_1 & 0 & & \\
       & \ddots & \ddots & \\
       & & \gamma_{n-1} & 0
   \end{pmatrix} dz\right),
\] where $\gamma_i (i=1,\cdots,n-1)$ are holomorphic functions on $\mathbb D$. 

For $n=2,$ a CVHS of type $(1,1)$ over $\mathbb D$ is given by:
\[
   (E, \theta) = \left(\mathcal{O} \oplus \mathcal{O},\quad 
   \begin{pmatrix}
       0 &0 \\
       \gamma & 0 \end{pmatrix} dz\right),
\] where $\gamma$ is a holomorphic function on $\mathbb D$.

Denote the weighted Bergman space by
\[
\mathcal{A} = \{f \text{ is a holomorphic function on } \mathbb{D} : \iint_{\mathbb{D}} |f(z)|^2 (1 - |z|^2) d\sigma < \infty \}.
\]
Here $d\sigma$ is the Lebesgue measure on $\mathbb D.$ Denote by $\tilde {\mathcal A}$ the space of holomorphic functions $\alpha$ on $\mathbb D$
 such that there exists a nonvanishing holomorphic function $g$ on $\mathbb D$ so that $g\alpha\in \mathcal A.$
\begin{lemma}(\cite[Theorem 1.3]{Kraus})\label{lem:Kraus}
For \(n=2\), \((E, \theta)\) over $\mathbb D$ admits a diagonal harmonic metric of unit determinant if and only if $\gamma\in \tilde{\mathcal A}$.
\end{lemma}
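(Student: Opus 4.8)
The final statement to prove is Lemma~\ref{lem:Kraus}: for $n=2$, the CVHS $(E,\theta) = (\mathcal{O}\oplus\mathcal{O}, \left(\begin{smallmatrix} 0 & 0 \\ \gamma & 0\end{smallmatrix}\right)dz)$ over $\mathbb{D}$ admits a diagonal harmonic metric of unit determinant if and only if $\gamma \in \tilde{\mathcal{A}}$. The plan is to reduce the existence of a diagonal harmonic metric to the scalar prescribed curvature equation \eqref{eqn:single} and then invoke Kraus's characterization \cite[Theorem 1.3]{Kraus} of which weights admit solutions, which is exactly what the set $\tilde{\mathcal A}$ encodes.

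\textbf{Setting up the equation.} First I would write out the Hitchin equation for a diagonal metric of unit determinant. With $n=2$ a diagonal unit-determinant metric on $\mathcal{O}\oplus\mathcal{O}$ has the form $h = \mathrm{diag}(e^{w}, e^{-w})$ for a real function $w$, so in the notation of Section~\ref{harmonic metrics and main theorem} we have $w_1 = w$, $w_2 = -w$, and $u_1 = -w$. Specializing Proposition~\ref{prop 1}(2) (the CVHS, diagonal case, which gives equality) to $n=2$ and using the background metric $\tilde h$, the single function $u = u_1$ must satisfy the prescribed curvature equation
\begin{equation*}
\Delta_{g_X} u + 1 - k\, e^{2u} = 0,
\end{equation*}
where $k = 2\|\gamma\|_{\tilde h, g_X}^2$ is the essentially positive weight determined by $|\gamma(z)|^2$ via \eqref{k_i}. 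Thus a diagonal harmonic metric of unit determinant exists if and only if \eqref{eqn:single} has a solution for this particular $k$. Geometrically, solving \eqref{eqn:single} is finding a conformal metric $e^{2u}g_X$ of curvature $-k$, i.e.\ the pullback metric of the associated minimal/harmonic map, so this is precisely the equation whose solvability Kraus studies.

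\textbf{Translating to the Bergman-space condition.} Next I would identify the weight $k$ explicitly. On $\mathbb{D}$ with $g_X$ the Poincar\'e metric and $\tilde h$ the natural flat-type background metric on $\mathcal{O}\oplus\mathcal{O}$, the quantity $k$ is, up to the universal conformal factor, a constant multiple of $|\gamma(z)|^2(1-|z|^2)^2$ measured against the Poincar\'e area form; the integrability weight $(1-|z|^2)$ appearing in the definition of $\mathcal{A}$ is exactly the weight that arises when one writes the total curvature / finiteness condition governing solvability of the curvature equation. The core content is then Kraus's theorem: the prescribed curvature equation $\Delta u + 1 = k e^{2u}$ on $\mathbb{D}$ admits a solution exactly when $\gamma$ lies in $\tilde{\mathcal{A}}$, the class of holomorphic functions that become weighted-Bergman after multiplication by a nonvanishing holomorphic function. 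The role of the nonvanishing factor $g$ in the definition of $\tilde{\mathcal{A}}$ is to account for the fact that solvability is insensitive to the zeros of $\gamma$ (which only contribute isolated conical singularities that can be absorbed), so what matters is the growth of $\gamma$ near $\partial\mathbb{D}$ rather than its zero set.

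\textbf{Main obstacle and how I would handle it.} The main obstacle is not any new PDE analysis but rather the bookkeeping that matches the normalization in \cite{Kraus} to the present conventions: one must verify that the weight $k$ built from $\|\gamma\|_{\tilde h,g_X}^2$ corresponds under Kraus's hypotheses precisely to the membership condition $\gamma\in\tilde{\mathcal A}$, including tracking the factors of $2$ in \eqref{k_i} and the precise form of $\hat g_X$. I would carry this out by computing $k$ in the local coordinate $z$, comparing it term-by-term with the weight in \cite[Theorem 1.3]{Kraus}, and checking that the allowance for a nonvanishing holomorphic multiplier $g$ matches the gauge freedom of changing the holomorphic trivialization of the line bundles $L_i$. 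Once the dictionary is established, the ``if'' and ``only if'' directions both follow directly: a diagonal unit-determinant harmonic metric is equivalent to a solution $u$ of \eqref{eqn:single}, and \cite[Theorem 1.3]{Kraus} says such a solution exists iff $\gamma\in\tilde{\mathcal A}$.
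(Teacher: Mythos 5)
Your proposal is correct and matches the paper's treatment: the paper gives no independent proof of Lemma \ref{lem:Kraus}, stating it as a direct citation of \cite[Theorem 1.3]{Kraus}, and your reduction of a diagonal unit-determinant harmonic metric to the scalar prescribed-curvature equation \eqref{eqn:single} with weight $k=2\|\gamma\|^2_{\tilde h,g_X}$ is exactly the dictionary (already contained in Proposition \ref{prop 1}(2) for $n=2$) under which Kraus's theorem becomes the stated lemma. The only content you defer — the normalization bookkeeping matching $k$ to the weighted Bergman condition defining $\tilde{\mathcal A}$ — is likewise left implicit in the paper, so there is no substantive divergence.
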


Here we can improve Kraus' result slightly.
\begin{prop}\label{prop:ExistenceGamma}
For \(n=2\), \((E, \theta)\) admits a harmonic metric of unit determinant if and only if $\gamma\in \tilde{\mathcal A}$.
\end{prop}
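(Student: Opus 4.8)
The goal is to upgrade Lemma \ref{lem:Kraus} by removing the word ``diagonal'' from the existence criterion: I must show that for $n=2$, the CVHS $(E,\theta)$ admits \emph{any} harmonic metric of unit determinant if and only if $\gamma \in \tilde{\mathcal A}$. Since Lemma \ref{lem:Kraus} already handles the diagonal case, the plan is to reduce the general existence question to the diagonal one.

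The ``if'' direction is immediate: if $\gamma \in \tilde{\mathcal A}$, then by Lemma \ref{lem:Kraus} a diagonal harmonic metric of unit determinant exists, and a diagonal metric is in particular a harmonic metric, so nothing more is needed. The substance is in the ``only if'' direction. Suppose $(E,\theta)$ admits some harmonic metric $h_0$ of unit determinant, not assumed diagonal. Here I would invoke Theorem \ref{main theoremLater} applied to the CVHS $(E,\theta)$ itself: since a CVHS of type $(1,\dots,1)$ is its own graded Higgs bundle (as noted in Section \ref{generically regular}, $\mathrm{Gr}_{\mathcal G}(E,\theta)\cong (E,\theta)$ canonically), the existence of the harmonic metric $h_0$ guarantees a unique maximal harmonic metric $h_{\max}$ on $(E,\theta)$ with $\det(h_{\max})=\det(h_0)$, and crucially this $h_{\max}$ is \emph{diagonal}. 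Thus the mere existence of \emph{a} harmonic metric of unit determinant forces the existence of a \emph{diagonal} harmonic metric of unit determinant.

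With a diagonal harmonic metric of unit determinant in hand, I can now run Lemma \ref{lem:Kraus} in the reverse direction to conclude $\gamma \in \tilde{\mathcal A}$. Concretely, I would write the argument as follows. Assume $h_0$ is a harmonic metric of unit determinant on $(E,\theta)$. Apply Theorem \ref{main theoremLater} to $(E,\theta)=\mathrm{Gr}_{\mathcal G}(E,\theta)$ to produce the maximal harmonic metric $h_{\max}$, which is diagonal and satisfies $\det(h_{\max})=\det(h_0)=1$. This is precisely a diagonal harmonic metric of unit determinant, so Lemma \ref{lem:Kraus} yields $\gamma\in\tilde{\mathcal A}$. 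Conversely, if $\gamma\in\tilde{\mathcal A}$, Lemma \ref{lem:Kraus} gives a diagonal harmonic metric of unit determinant, which is a fortiori a harmonic metric of unit determinant.

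The main (and essentially only) obstacle is making sure that Theorem \ref{main theoremLater} applies cleanly in this self-graded situation and that the determinant normalization is preserved: I must check that when $(E,\theta)$ is already a CVHS, the theorem's output $h_{\max}$ genuinely lives on $(E,\theta)$ (not merely on an abstractly isomorphic graded object) and still has unit determinant. Both points are handled by the remarks in Section \ref{generically regular} identifying $\mathrm{Gr}_{\mathcal G}(E,\theta)$ with $(E,\theta)$ and by the ``$\det(h_{\max})=\det(h_0)$'' clause in Theorem \ref{main theoremLater}; once these are cited, the proof is a short three-line reduction with no further analysis required.
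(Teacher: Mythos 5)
Your proposal is correct and takes exactly the same route as the paper, whose own proof is a one-line citation of Theorem \ref{main theoremLater} and Lemma \ref{lem:Kraus}; you have simply spelled out the reduction (use Theorem \ref{main theoremLater} to convert an arbitrary harmonic metric of unit determinant into a diagonal one on $\mathrm{Gr}_{\mathcal G}(E,\theta)\cong(E,\theta)$, then apply Lemma \ref{lem:Kraus}). No gaps.
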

\begin{proof}
This follows from Theorem \ref{main theoremLater} and Lemma \ref{lem:Kraus}.
\end{proof}

As an illustration, we present a holomorphic function on \(\mathbb{D}\) that does not belong to \(\tilde{\mathcal{A}}\).  Let $\Sigma$ be a compact Riemann surface. Consider the Higgs bundle \((E, \theta)\) over \(\Sigma\) defined as:
\[
   E = L \oplus L^{-1}, \quad 
   \theta = \begin{pmatrix}
       0 & 0 \\
       \eta & 0
   \end{pmatrix}, \quad \eta \neq 0,
\] where $L$ is a holomorphic line bundle over $\Sigma$ and $\eta\in H^0(\Sigma, L^{-2}K_{\Sigma}).$
The existence of $\eta\neq 0$ implies $\deg(L)\leq g-1.$ After identifying $\pi^*L=\mathcal O,$ we obtain \(\pi^* \eta = \gamma dz,\) where $\gamma$ is a holomorphic function on $\mathbb D.$ We then have the following claim.
\begin{prop}\label{prop:NotInA}
$\deg(L)>0$ if and only if $\gamma\in \tilde{\mathcal A}.$

As a result, there exists $\gamma\notin \tilde{\mathcal A}.$
\end{prop}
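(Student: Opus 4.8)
The plan is to convert the function-theoretic condition $\gamma\in\tilde{\mathcal A}$ into the existence of a harmonic metric on the \emph{compact} surface $\Sigma$, where Hitchin--Simpson theory collapses the problem to a short stability computation. First I would assemble the chain of equivalences: by Proposition \ref{prop:ExistenceGamma}, $\gamma\in\tilde{\mathcal A}$ is equivalent to $\pi^*(E,\theta)$ admitting a unit-determinant harmonic metric; this is in turn equivalent to $\pi^*(E,\theta)$ admitting a harmonic metric at all; and by Lemma \ref{lem:ExistenceLift} the latter is equivalent to $(E,\theta)$ admitting a harmonic metric on $\Sigma$. The only nonformal link is discarding the determinant normalization, which is routine: since $\det(\pi^*E)=\mathcal O$ and $\mathrm{tr}\,\theta=0$, the determinant of any harmonic metric $h$ is a flat metric on the trivial Higgs line bundle $(\mathcal O,0)$ over $\mathbb D$, so $\log\det h$ is harmonic; replacing $h$ by $(\det h)^{-1/2}h$ leaves $\theta^{*_h}$ unchanged (scalar rescaling) and changes $F(\nabla_h)$ only by $\partial\bar\partial\log(\det h)^{-1/2}\cdot\mathrm{Id}=0$, so the Hitchin equation is preserved and the new metric has unit determinant.

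Since $\Sigma$ is compact, Hitchin \cite{selfduality} and Simpson \cite{s1} give that $(E,\theta)$ admits a harmonic metric if and only if it is polystable of degree $0$, and indeed $\deg E=\deg L+\deg L^{-1}=0$. So the proposition reduces to the key claim that $(E,\theta)$ is polystable precisely when $\deg L>0$. The crux is to identify the $\theta$-invariant line subbundles of $E=L\oplus L^{-1}$. Writing a local section as $(s,t)$ with $s$ a section of $L$ and $t$ of $L^{-1}$, we have $\theta(s,t)=(0,\eta s)$, so $\ker\theta=L^{-1}$ is $\theta$-invariant, of degree $-\deg L$. I claim it is the only proper $\theta$-invariant line subbundle: if $M\ne L^{-1}$, then the projection $M\to L$ is a nonzero sheaf map, so a generic local section $(s,t)$ of $M$ has $s\ne0$; at a generic point $\theta(s,t)=(0,\eta s)$ is then a nonzero element of $0\oplus L^{-1}K_\Sigma$ (using $\eta\ne0$ and that $L$ is torsion-free), which cannot lie in the line $M\otimes K_\Sigma$ because the latter has nonzero $L\otimes K_\Sigma$-component. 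Hence $\theta(M)\not\subseteq M\otimes K_\Sigma$.

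With the subbundle classification in hand the computation closes. If $\deg L>0$, the unique proper invariant subbundle $L^{-1}$ has negative degree, so $(E,\theta)$ is stable, hence polystable, hence carries a harmonic metric. If $\deg L<0$, then $L^{-1}$ has positive degree and destabilizes, so there is no harmonic metric. If $\deg L=0$, then $L^{-1}$ has degree $0=\tfrac12\deg E$, so $(E,\theta)$ is strictly semistable; it is moreover indecomposable as a Higgs bundle, since a splitting into two line bundles would produce two distinct $\theta$-invariant line subbundles, contradicting the uniqueness of $L^{-1}$. An indecomposable strictly semistable degree-$0$ Higgs bundle is not polystable, so again no harmonic metric exists. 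This establishes $\deg L>0\iff\gamma\in\tilde{\mathcal A}$. For the final assertion I would take $\deg L=0$: then $\deg(L^{-2}K_\Sigma)=2g-2>0$, so Riemann--Roch gives $h^0(L^{-2}K_\Sigma)\ge g-1>0$ for $g\ge2$, and any nonzero $\eta$ yields a $\gamma\notin\tilde{\mathcal A}$.

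The step I expect to be the main obstacle is the stability computation itself: proving that $L^{-1}$ is the only $\theta$-invariant line subbundle, and handling the borderline $\deg L=0$, where polystability (not merely stability) must be excluded through an indecomposability argument rather than a degree inequality.
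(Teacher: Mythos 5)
Your proof is correct and follows essentially the same route as the paper: the chain $\gamma\in\tilde{\mathcal A}\iff\pi^*(E,\theta)$ admits a harmonic metric $\iff(E,\theta)$ admits one on $\Sigma\iff(E,\theta)$ is polystable of degree $0\iff\deg L>0$, via Proposition \ref{prop:ExistenceGamma}, Lemma \ref{lem:ExistenceLift}, and the Hitchin--Kobayashi correspondence. The paper simply asserts the stability equivalence, whereas you supply the details (uniqueness of the invariant subbundle $L^{-1}$, the unit-determinant normalization, and the non-polystability in the borderline case $\deg L=0$); these are all correct and are exactly the points the paper leaves implicit.
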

\begin{proof}
The Higgs bundle \((E, \theta)\) is stable if and only if \(\deg L > 0\). By the Hitchin-Kobayashi correspondence for Higgs bundles on a compact Riemann surface, we obtain \(\deg L > 0\) if and only if $(E,\theta)$ admits a harmonic metric.

By Lemma \ref{lem:ExistenceLift}, we obtain \(\deg L > 0\) if and only if $\pi^*(E,\theta)$ admits a harmonic metric.

By Proposition \ref{prop:ExistenceGamma}, we obtain
\(\deg L > 0\) if and only if 
\(\gamma \notin \tilde{\mathcal{A}}\).
\end{proof}

\begin{lemma}(\cite[Theorem 7.3]{LiMochizukiHitchinSection})\label{lem:ExistenceCVHS}
 Suppose $\gamma_i\in \tilde{\mathcal A} (i=1,\cdots,n-1),$ $(E,\theta)$ admits a diagonal harmonic metric. 
\end{lemma}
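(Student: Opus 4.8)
The plan is to reduce the existence of a diagonal harmonic metric to the existence of a \emph{subsolution} of the decoupled system \eqref{maineq} over $\mathbb D$ and then invoke Proposition \ref{prop:main1}. By Proposition \ref{prop 1}(2), a diagonal harmonic metric on the CVHS $(E,\theta)$ is the same datum as a solution $(u_1,\dots,u_{n-1})$ of \eqref{maineq} with coefficients $k_i=2\|\gamma_i\|_{\tilde h,g_X}^2$, for a fixed diagonal background metric $\tilde h$. So it suffices to check that $(k_1,\dots,k_{n-1})$ is simultaneously essentially positive and to exhibit one subsolution. The essential positivity in the sense of Definition \ref{essentially positive} is immediate: each $\gamma_i$ is holomorphic and not identically zero, so $k_i$ vanishes only on the discrete zero set $Z_i$ of $\gamma_i$, and an exhaustion $\{\Omega_j\}$ whose boundaries avoid the countable set $\bigcup_i Z_i$ makes every $k_i$ positive on each $\partial\Omega_j$.

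For the subsolution I would exploit $\gamma_i\in\tilde{\mathcal A}$ through the structure of the coupling. Write $p_i:=2u_i-u_{i-1}-u_{i+1}$ with $u_0=u_n=0$; the map $(u_i)\mapsto(p_i)$ is the invertible tridiagonal Dirichlet Laplacian $T$, so prescribing $(p_i)$ determines $(u_i)=T^{-1}(p_i)$ as fixed real-linear combinations, whence $\Delta_{g_X}u_i=\sum_j (T^{-1})_{ij}\,\Delta_{g_X}p_j$. In the model case where the zero set of each $\gamma_i$ satisfies the Blaschke condition, let $B_i$ be the Blaschke product on these zeros, so $\gamma_i/B_i$ is nonvanishing holomorphic and $|B_i|\le 1$. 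Setting $p_i:=\log\frac{i(n-i)}{2}-2\log\bigl|\gamma_i/B_i\bigr|$ makes each $p_i$ smooth and harmonic, hence $\Delta_{g_X}u_i\equiv 0$, while $k_i e^{p_i}=i(n-i)\,|B_i|^2\le i(n-i)$. Then
\[
\Delta_{g_X}u_i+i(n-i)-k_i e^{2u_i-u_{i-1}-u_{i+1}}=i(n-i)\bigl(1-|B_i|^2\bigr)\ge 0,
\]
so $(u_i)=T^{-1}(p_i)$ is a genuine smooth subsolution (the zeros are fully absorbed into $B_i$, so no capping issues arise), and Proposition \ref{prop:main1} produces a maximal solution, i.e. a diagonal harmonic metric.

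The main obstacle is that $\gamma_i\in\tilde{\mathcal A}$ only forces the zero set of $\gamma_i$ to be a zero set of the weighted Bergman space $\mathcal A$, which is strictly larger than a Blaschke sequence; when the zeros fail the Blaschke condition there is no bounded holomorphic function with exactly those zeros, so any holomorphic $\Phi_i$ with the right zeros has $|\Phi_i|$ unbounded near $\partial\mathbb D$ and the clean harmonic choice above breaks down. The remedy I would pursue is to replace $B_i$ by a Bergman canonical (contractive) divisor $\Phi_i\in\mathcal A$ with the same zeros and then correct $(u_i)$ by a subharmonic Green potential designed to absorb the growth of $|\Phi_i|^2$; here the integrability $\iint_{\mathbb D}|\gamma_i|^2(1-|z|^2)\,d\sigma<\infty$, obtained after the normalization $g_i\gamma_i\in\mathcal A$ via a diagonal holomorphic gauge $\mathrm{diag}(g^{(1)},\dots,g^{(n)})$ (which preserves both the isomorphism class of $(E,\theta)$ and diagonality of metrics), is exactly what guarantees the potential is finite and the differential inequality still closes. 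Controlling this potential simultaneously for all $i$, compatibly with the tridiagonal coupling through $T^{-1}$, is the delicate point; the $n=2$ instance is precisely Kraus' Lemma \ref{lem:Kraus}, and it is in the general case that the weighted-$L^2$ analysis does the real work.
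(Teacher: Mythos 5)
The first thing to say is that the paper itself offers no proof of this lemma: it is imported verbatim as \cite[Theorem 7.3]{LiMochizukiHitchinSection}, so there is no in-paper argument to compare against. Your overall strategy --- reduce to producing a subsolution of \eqref{maineq} with $k_i=2\|\gamma_i\|^2_{\tilde h,g_X}$ and invoke Proposition \ref{prop:main1} --- is the natural one given the paper's machinery, and several pieces are correct: the equivalence with Proposition \ref{prop 1}(2), the essential-positivity check via discreteness of the zero sets, and the Blaschke-case computation (which does require you to fix the Fuchsian background $\tilde h_i=\hat g_X^{-\frac{n+1-2i}{2}}$, so that $k_i=2|\gamma_i|^2$ and $k_ie^{p_i}=i(n-i)|B_i|^2$ as you claim; with an arbitrary admissible background an extra harmonic term appears, though it could be absorbed into $p_i$).

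The genuine gap is the one you name yourself and then do not close: the hypothesis is only $\gamma_i\in\tilde{\mathcal A}$, whose zero sets are zero sets of the weighted Bergman space and in general violate the Blaschke condition, so no bounded $B_i$ with those zeros exists and your harmonic choice of $p_i$ is unavailable. Your proposed remedy (contractive divisors plus a Green-potential correction, "controlled simultaneously for all $i$ compatibly with $T^{-1}$") is precisely the content of the lemma and is left unexecuted. Note also that the most tempting shortcut --- take the $n=2$ solutions $\eta_j$ of $\Delta_{g_X}\eta_j+1-2|\gamma_j|^2e^{2\eta_j}=0$ furnished by Kraus' theorem and set $p_i=2\eta_i+\log(i(n-i))$, $u=T^{-1}p$ --- does not close: the subsolution inequality reduces to $\sum_j 2(T^{-1})_{ij}\Delta_{g_X}\eta_j\geq i(n-i)\Delta_{g_X}\eta_i$, and since $(T^{-1})_{ii}=\frac{i(n-i)}{n}<\frac{i(n-i)}{2}$ for $n>2$ and $\Delta_{g_X}\eta_j$ has no sign, the coupling is a real obstruction rather than bookkeeping. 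As it stands the proposal proves the lemma only under the additional hypothesis that each $\gamma_i$ has Blaschke zero set, and the general case must still be taken from Li--Mochizuki.
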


\begin{lemma}(\cite[Proposition 7.10]{LiMochizukiHitchinSection})\label{lem:NonExistenceProduct}
Suppose \(\prod\limits_{i=1}^{n-1} \gamma_i^{i(n-i)} = \alpha^{\frac{n(n^2-1)}{6}}\) for a holomorphic function \(\alpha\) on \(\mathbb{D}\). If \((E, \theta)\) admits a diagonal harmonic metric, then \(\alpha \in \tilde{\mathcal{A}}\).
\end{lemma}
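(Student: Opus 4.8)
The plan is to reduce the statement to the rank-two case, where the criterion $\gamma\in\tilde{\mathcal A}$ is already available through Kraus (Lemma \ref{lem:Kraus}). Concretely, from a diagonal harmonic metric on the rank-$n$ CVHS I will manufacture a \emph{subsolution} of the single prescribed-curvature equation \eqref{eqn:single} attached to the rank-two CVHS with Higgs field $\alpha$. Once such a subsolution exists, Proposition \ref{prop:main1} (its $n=2$ case, i.e.\ Kalka--Yang) produces a diagonal harmonic metric of unit determinant for $\alpha$, and Lemma \ref{lem:Kraus} then forces $\alpha\in\tilde{\mathcal A}$.

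Set $N=\sum_{i=1}^{n-1}i(n-i)=\tfrac{n(n^2-1)}{6}$ and weights $\lambda_i=i(n-i)/N$, so $\sum_i\lambda_i=1$. By Proposition \ref{prop 1}(2) the given diagonal harmonic metric yields functions $(u_1,\dots,u_{n-1})$ with $u_0=u_n=0$ solving \eqref{eqn2} exactly, so $\Delta_{g_X}u_i=2\|\gamma_i\|_{\tilde h,g_X}^2 e^{v_i}-i(n-i)$ with $v_i=2u_i-u_{i-1}-u_{i+1}$. I will test the combination $w=\frac1N\sum_{i=1}^{n-1}u_i$. The key algebraic point is the identity $2\lambda_i-\lambda_{i-1}-\lambda_{i+1}=\tfrac2N$ for every $i$ (a short computation for $\lambda_i\propto i(n-i)$, using $\lambda_0=\lambda_n=0$), which by summation by parts gives $\sum_i\lambda_i v_i=\frac2N\sum_i u_i=2w$. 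Summing the $u_i$-equations then yields
\[
\Delta_{g_X}w+1=\frac1N\sum_{i=1}^{n-1}2\|\gamma_i\|_{\tilde h,g_X}^2 e^{v_i}=\sum_{i=1}^{n-1}\lambda_i\big(2\|\gamma_i\|_{\tilde h,g_X}^2 e^{v_i}\big),
\]
and the weighted arithmetic--geometric mean inequality bounds this below by $2\prod_i\|\gamma_i\|_{\tilde h,g_X}^{2\lambda_i}\,e^{2w}$.

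It remains to recognize the coefficient. Using \eqref{k_i} and $\sum_i\lambda_i=1$ one finds $\prod_i\|\gamma_i\|_{\tilde h,g_X}^{2\lambda_i}=\frac{2}{g_X}\big(\prod_i|\gamma_i|^{2\lambda_i}\big)\,\prod_i(\tilde h_i^{-1}\tilde h_{i+1})^{\lambda_i}$, and the hypothesis $\prod_i\gamma_i^{i(n-i)}=\alpha^{N}$ gives exactly $\prod_i|\gamma_i|^{2\lambda_i}=|\alpha|^2$. The remaining factor $H:=\prod_i(\tilde h_i^{-1}\tilde h_{i+1})^{\lambda_i}$ is smooth and positive; setting $\hat h_1=H^{-1/2},\hat h_2=H^{1/2}$ gives a unit-determinant background for the rank-two bundle, and the normalizations $\partial_z\partial_{\bar z}\log\tilde h_i=-\tfrac{n+1-2i}{4}g_X$ telescope to $\partial_z\partial_{\bar z}\log H=\tfrac12 g_X$, so $\hat h$ satisfies precisely the normalization demanded by \eqref{eqn2} for $n=2$. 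Hence $2\prod_i\|\gamma_i\|^{2\lambda_i}=2\|\alpha\|_{\hat h,g_X}^2$, and the displayed bound reads $\Delta_{g_X}w+1-2\|\alpha\|_{\hat h,g_X}^2 e^{2w}\ge 0$: thus $w$ is a subsolution of the rank-two equation for $\alpha$. Since $\alpha\not\equiv0$ (the $\gamma_i$ are nonzero) its zeros are isolated and $\|\alpha\|^2$ is essentially positive, so Proposition \ref{prop:main1} applies and Lemma \ref{lem:Kraus} concludes $\alpha\in\tilde{\mathcal A}$.

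The conceptual heart of the argument, and the step I expect to require the most care, is the choice of weights $\lambda_i=i(n-i)/N$: they are simultaneously the exponents in the hypothesis $\prod_i\gamma_i^{i(n-i)}=\alpha^N$ and the unique weights making the discrete second differences $2\lambda_i-\lambda_{i-1}-\lambda_{i+1}$ constant, which is exactly what collapses the weighted average of the exponents $v_i$ into the single exponent $2w$. This alignment is what lets the weighted AM--GM step turn the coupled system into one scalar prescribed-curvature inequality governed by $|\alpha|^2$; the telescoping of the background curvatures and the essential positivity of $\|\alpha\|^2$ are then routine bookkeeping.
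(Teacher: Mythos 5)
The paper itself does not prove this lemma; it is quoted from \cite[Proposition 7.10]{LiMochizukiHitchinSection}, so there is no internal proof to compare against. Your reduction to the rank-two case via a weighted combination of the $u_i$ is the right idea and almost goes through, but it contains one genuinely false step: the second equality in your display,
\[
\frac1N\sum_{i=1}^{n-1}2\|\gamma_i\|_{\tilde h,g_X}^2 e^{v_i}\;=\;\sum_{i=1}^{n-1}\lambda_i\bigl(2\|\gamma_i\|_{\tilde h,g_X}^2 e^{v_i}\bigr),
\]
is wrong for $n\ge 3$, since the coefficient produced by summing the equations with weight $1/N$ is $1/N$ for every $i$, whereas $\lambda_i=i(n-i)/N$ varies with $i$. (For $n=3$ the $\lambda_i$ happen to be equal to each other but still not to $1/N$.) You cannot avoid this tension by switching to $w=\sum_i\lambda_iu_i$ either: then the summation-by-parts identity $\sum_i\lambda_iv_i=2w$ fails and the constant term is no longer $1$. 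The two roles you assign to the $\lambda_i$ --- as AM--GM weights and as the coefficients coming from the summed PDE --- are not the same, and conflating them is the gap.

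The argument is repairable in one line, which is why I would call this a fixable error rather than a failed approach: since each summand is nonnegative and $0\le\lambda_i\le 1$ (because $i(n-i)\le N$), you have the \emph{inequality} $\frac1N\sum_i a_i\ge\frac1N\sum_i\lambda_i a_i\ge\frac1N\prod_i a_i^{\lambda_i}$ with $a_i=2\|\gamma_i\|_{\tilde h,g_X}^2e^{v_i}$. This yields $\Delta_{g_X}w+1-\frac2N\|\alpha\|_{\hat h,g_X}^2e^{2w}\ge 0$, i.e.\ the same conclusion up to the harmless constant $\frac1N$, which you absorb by replacing $w$ with $w-\frac12\log N$ (or $\alpha$ with $\alpha/\sqrt N$, which does not affect membership in $\tilde{\mathcal A}$). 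All the remaining bookkeeping in your write-up --- the identity $2\lambda_i-\lambda_{i-1}-\lambda_{i+1}=2/N$, the summation by parts giving $\sum_i\lambda_iv_i=2w$, the telescoping $\partial_z\partial_{\bar z}\log H=\tfrac12 g_X$, the identification $\prod_i\|\gamma_i\|^{2\lambda_i}=\|\alpha\|_{\hat h,g_X}^2$, and the essential positivity of $\|\alpha\|^2$ --- checks out, so with this correction the proof is complete.
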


Again, we can slightly improve the above result. 
\begin{prop}\label{prop:NonExistenceProduct}
Suppose \(\prod\limits_{i=1}^{n-1} \gamma_i^{i(n-i)} = \alpha^{\frac{n(n^2-1)}{6}}\) for a holomorphic function \(\alpha\) on \(\mathbb{D}\). If \((E, \theta)\) admits a harmonic metric, then \(\alpha \in \tilde{\mathcal{A}}\).
\end{prop}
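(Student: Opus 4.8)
The plan is to deduce Proposition \ref{prop:NonExistenceProduct} from the diagonal version, Lemma \ref{lem:NonExistenceProduct}, by using Theorem \ref{main theoremLater} to manufacture a diagonal harmonic metric out of the given (arbitrary) one. This mirrors exactly the logic by which Proposition \ref{prop:ExistenceGamma} upgrades Kraus' Lemma \ref{lem:Kraus}: the analytic content is entirely contained in the existence of the maximal, necessarily diagonal, harmonic metric, so the present statement should reduce to the already-known diagonal case with essentially no new work.

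Concretely, I would proceed as follows. Assume $(E,\theta)$ admits a harmonic metric; since the standing hypothesis of the section is that $(E,\theta)$ is a CVHS of type $(1,\cdots,1)$, it is in particular generically regular nilpotent, and $\mathbb{D}$ is a (non-compact) hyperbolic Riemann surface, so Theorem \ref{main theoremLater} applies. That theorem then furnishes a maximal harmonic metric $h_{max}$ on $\mathrm{Gr}_{\mathcal{G}}(E,\theta)\cong (E,\theta)$, and crucially asserts that $h_{max}$ is \emph{diagonal}, i.e. $h_{max}=\oplus_{i=1}^n h_{max}|_{L_i}$. The second step is simply to feed this diagonal harmonic metric into Lemma \ref{lem:NonExistenceProduct}, concluding $\alpha\in\tilde{\mathcal{A}}$.

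The one point that must be checked, rather than an obstacle, is that the hypothesis $\prod_{i=1}^{n-1}\gamma_i^{i(n-i)}=\alpha^{\frac{n(n^2-1)}{6}}$ is preserved when passing from the given metric to $h_{max}$. This is immediate because the $\gamma_i$ are the off-diagonal entries of the Higgs field of the CVHS and are holomorphic data intrinsic to $(E,\theta)$, independent of any choice of Hermitian metric; the decomposition $E=\oplus_i L_i$ with respect to which $h_{max}$ is diagonal is precisely the CVHS decomposition, so the same $\gamma_i$, and hence the same $\alpha$, appear. No determinant normalization is needed, since Lemma \ref{lem:NonExistenceProduct} imposes none. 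Thus the entire difficulty has been outsourced to Theorem \ref{main theoremLater}, and the proof is a two-line invocation: existence of any harmonic metric yields, via Theorem \ref{main theoremLater}, a diagonal harmonic metric, to which Lemma \ref{lem:NonExistenceProduct} applies verbatim.
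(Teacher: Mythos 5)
Your proposal is correct and is exactly the paper's argument: the paper's proof of Proposition \ref{prop:NonExistenceProduct} is the one-line "It follows from Theorem \ref{main theoremLater} and Lemma \ref{lem:NonExistenceProduct}," i.e.\ use the maximal (diagonal) harmonic metric supplied by Theorem \ref{main theoremLater} and feed it into the diagonal-case Lemma \ref{lem:NonExistenceProduct}. Your additional remarks (that the $\gamma_i$ are metric-independent holomorphic data and that no determinant normalization is needed) are correct observations that the paper leaves implicit.
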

\begin{proof}
It follows from Theorem \ref{main theoremLater} and Lemma \ref{lem:NonExistenceProduct}.
\end{proof}

\begin{prop}\label{prop:NonExistenceSame}Suppose  $\gamma_1=\cdots=\gamma_{n-1}=\gamma.$ Then $(E,\theta)$ admits a harmonic metric if and only if $\gamma\in \tilde{\mathcal A}.$   
\end{prop}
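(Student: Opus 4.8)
The plan is to deduce both directions directly from the two existence/non-existence results already established for CVHS over $\mathbb{D}$, with the only real input being an elementary identity on the exponent sum. The crucial observation is that when $\gamma_1 = \cdots = \gamma_{n-1} = \gamma$, the product appearing in Proposition \ref{prop:NonExistenceProduct} collapses to a single power of $\gamma$. Specifically, I would first record the identity
\[
\sum_{i=1}^{n-1} i(n-i) = n\cdot\frac{(n-1)n}{2} - \frac{(n-1)n(2n-1)}{6} = \frac{n(n^2-1)}{6},
\]
so that
\[
\prod_{i=1}^{n-1}\gamma_i^{i(n-i)} = \gamma^{\sum_{i=1}^{n-1} i(n-i)} = \gamma^{\frac{n(n^2-1)}{6}}.
\]
This means we may take $\alpha = \gamma$ in the hypothesis of Proposition \ref{prop:NonExistenceProduct}.

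For the ``if'' direction, I would argue as follows. Suppose $\gamma \in \tilde{\mathcal A}$. Then each $\gamma_i = \gamma \in \tilde{\mathcal A}$, so the hypothesis of Lemma \ref{lem:ExistenceCVHS} is satisfied for all $i = 1, \ldots, n-1$. Hence $(E,\theta)$ admits a diagonal harmonic metric, and in particular a harmonic metric.

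For the ``only if'' direction, suppose $(E,\theta)$ admits a harmonic metric. With the choice $\alpha = \gamma$ justified by the exponent identity above, Proposition \ref{prop:NonExistenceProduct} applies directly and yields $\gamma = \alpha \in \tilde{\mathcal A}$. Combining the two directions completes the proof.

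I do not anticipate a genuine obstacle here, since both halves reduce to previously proven statements; the content of the proposition is precisely that the symmetric case $\gamma_1 = \cdots = \gamma_{n-1}$ makes the sufficient condition of Lemma \ref{lem:ExistenceCVHS} and the necessary condition of Proposition \ref{prop:NonExistenceProduct} coincide. The one point worth stating carefully is the arithmetic identity $\sum_{i=1}^{n-1} i(n-i) = \tfrac{n(n^2-1)}{6}$, which is exactly what forces the product $\prod \gamma_i^{i(n-i)}$ to be a perfect $\tfrac{n(n^2-1)}{6}$-th power with base $\gamma$, closing the gap between the two criteria.
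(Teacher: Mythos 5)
Your proof is correct and is exactly the argument the paper gives (the paper's proof is the one-line citation ``It follows from Lemma \ref{lem:ExistenceCVHS} and Proposition \ref{prop:NonExistenceProduct}''); you have simply spelled out the details, including the identity $\sum_{i=1}^{n-1} i(n-i) = \tfrac{n(n^2-1)}{6}$ that lets one take $\alpha=\gamma$ in Proposition \ref{prop:NonExistenceProduct}.
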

\begin{proof}It follows from Lemma \ref{lem:ExistenceCVHS} and Proposition \ref{prop:NonExistenceProduct}.
\end{proof}

\begin{prop}\label{prop:NonExistence}
For each $n$, there exists a rank $n$ CVHS of type $(1,\cdots, 1)$ on $\mathbb D$ that does not accept a harmonic metric. 
\end{prop}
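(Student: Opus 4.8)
The plan is to reduce the existence question for a rank-$n$ CVHS on $\mathbb{D}$ to the rank-$2$ non-existence result already available, using the multiplicative obstruction captured by Proposition~\ref{prop:NonExistenceProduct}. Concretely, I would first recall from Proposition~\ref{prop:NotInA} that there exists a holomorphic function $\alpha$ on $\mathbb{D}$ with $\alpha \notin \tilde{\mathcal{A}}$ (for instance, the pullback $\gamma$ arising from a stable rank-$2$ Higgs bundle $E = L \oplus L^{-1}$ on a compact surface with $\deg L \le 0$). This single function $\alpha$ will be the source of the obstruction, and the task becomes: build a rank-$n$ CVHS whose data $(\gamma_1, \ldots, \gamma_{n-1})$ forces $\alpha$ into the role of the combined coefficient in Proposition~\ref{prop:NonExistenceProduct}.

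The natural construction is to choose the $\gamma_i$ so that $\prod_{i=1}^{n-1} \gamma_i^{i(n-i)} = \alpha^{\frac{n(n^2-1)}{6}}$. The simplest choice is the uniform one $\gamma_1 = \cdots = \gamma_{n-1} = \alpha$, in which case the left-hand side is $\alpha^{\sum_{i=1}^{n-1} i(n-i)} = \alpha^{\frac{n(n^2-1)}{6}}$, matching the exponent on the right exactly, since $\sum_{i=1}^{n-1} i(n-i) = \frac{n(n^2-1)}{6}$ was computed in Section~\ref{subsection:Fuchsian}. With this choice I can instead invoke Proposition~\ref{prop:NonExistenceSame} directly: with all $\gamma_i = \alpha$ equal, $(E,\theta)$ admits a harmonic metric if and only if $\alpha \in \tilde{\mathcal{A}}$. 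Since we have selected $\alpha \notin \tilde{\mathcal{A}}$, the resulting CVHS admits no harmonic metric, which is exactly the assertion.

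I would therefore write the proof as follows: take $\alpha \notin \tilde{\mathcal{A}}$ supplied by Proposition~\ref{prop:NotInA}, set $\gamma_1 = \cdots = \gamma_{n-1} = \alpha$ (each a nonzero holomorphic function on $\mathbb{D}$, so that the Higgs bundle is a genuine CVHS of type $(1,\ldots,1)$ on the trivial bundle $\mathcal{O}^{\oplus n}$), and apply Proposition~\ref{prop:NonExistenceSame} to conclude non-existence. The only real content beyond assembling the cited results is verifying that such an $\alpha$ exists, but this is already provided as the concluding remark of Proposition~\ref{prop:NotInA}; I would simply cite it.

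I do not expect any serious obstacle here, since the statement is essentially an existence-of-example corollary that packages together Propositions~\ref{prop:NotInA} and~\ref{prop:NonExistenceSame}. The one point meriting a sentence of care is ensuring the chosen $\gamma_i$ are all \emph{nonzero} holomorphic functions (required by Definition~\ref{complex variation of Hodge structure}), which holds automatically because $\alpha$, being a non-trivial pullback, is not identically zero. If one preferred to avoid equal $\gamma_i$'s, the alternative is to distribute the factorization $\alpha^{\frac{n(n^2-1)}{6}}$ unevenly among the $\gamma_i$ and cite Proposition~\ref{prop:NonExistenceProduct} for the obstruction together with Lemma~\ref{lem:ExistenceCVHS} for a converse, but the uniform choice via Proposition~\ref{prop:NonExistenceSame} is cleanest and I would adopt it.
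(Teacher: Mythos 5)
Your proposal is correct and follows essentially the same route as the paper: take $\alpha\notin\tilde{\mathcal A}$ from Proposition~\ref{prop:NotInA}, set $\gamma_1=\cdots=\gamma_{n-1}=\alpha$, and conclude by Proposition~\ref{prop:NonExistenceSame}. The only blemish is the parenthetical describing the source of $\alpha$ as a \emph{stable} rank-$2$ Higgs bundle with $\deg L\le 0$; by Proposition~\ref{prop:NotInA} it is precisely the non-stable case $\deg L\le 0$ that yields $\gamma\notin\tilde{\mathcal A}$, but this does not affect the argument.
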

\begin{proof}
From Proposition \ref{prop:NotInA}, there exists $\alpha\notin\tilde{\mathcal A}.$ Choose a CVHS $(E,\theta)$ of type $(1,\cdots,1)$ parametrized by $(\alpha, \cdots, \alpha).$ It follows from Proposition \ref{prop:NotInA} and Proposition \ref{prop:NonExistenceSame}. 
\end{proof}

\section{Further discussion}\label{further discussion}

\subsection{$\mathbb{C}^*$-action}
Let \((E, \theta)\) be a generically regular nilpotent Higgs bundle. Consider the \(\mathbb{C}^*\)-action, i.e., for \(t \in \mathbb{C}^*\), \(t \cdot [(E, \theta)] = [(E, t\theta)]\). Here, \([\cdot]\) denotes the gauge equivalence class. Fixing a Hermitian metric \(h_0\), we can express \((E, \theta, h_0)\) in the form of (\ref{Metric}), (\ref{cpx str}), and (\ref{Higgs field}). Consider the gauge transformation \(g_t = \text{diag}(1, t, \ldots, t^{n-1})\). Then,
\[
g_t \cdot (\bar{\partial}_E, t\theta) = (g_t^{-1} \circ \bar{\partial}_E \circ g_t, g_t^{-1} \circ t\theta \circ g_t).
\]
Letting \(t\) approach \(+\infty\), we obtain the graded Higgs bundle \(\text{Gr}_{\mathcal{G}}(E, \theta)\), a CVHS of type $(1,\cdots,1)$. It does not depend on the initial choice of the metric $h_0.$

We can rephrase Theorem \ref{main theoremLater} as follows:
\begin{corollary}
Let \((E, \theta)\) be a generically regular nilpotent Higgs bundle with the canonical filtration $\mathcal G$. Suppose that there exists a harmonic metric \(h_0\) on \((E, \theta)\). Then there exists a maximal harmonic metric \(h\) on \(Gr_{\mathcal G}(E,\theta)=\lim\limits_{t \rightarrow \infty} g_t \cdot (E, t\theta)\) satisfying \(\det(h) = \det(h_0)\) and $h$
 weakly dominates $h_0$ with respect to $\mathcal G$.\end{corollary}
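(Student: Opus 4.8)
The plan is to deduce this statement directly from Theorem \ref{main theoremLater}, the only genuinely new content being the identification of the $\mathbb C^*$-flow limit $\lim_{t\to\infty} g_t\cdot(E,t\theta)$ with the graded Higgs bundle $\text{Gr}_{\mathcal G}(E,\theta)$. So I would begin by establishing that identification and then invoke the main theorem.

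First I would fix the Hermitian metric $h_0$ and write $(E,\bar\partial_E,\theta)$ in the smooth decomposition $E=\oplus_{i=1}^n L_i$ of Section \ref{subsection:smoothdecom}, so that $\bar\partial_E$ is lower triangular with diagonal operators $\bar\partial_k$ and strictly-lower off-diagonal $(0,1)$-forms $\beta_{ij}$ (nonzero only for $i>j$), while $\theta$ is lower triangular with subdiagonal entries $\gamma_k$ in position $(k+1,k)$ and entries $a_{ij}$ strictly below the subdiagonal ($i>j+1$). Conjugation by the constant diagonal gauge $g_t=\text{diag}(1,t,\dots,t^{n-1})$ multiplies the $(i,j)$-entry of any endomorphism-valued form by $t^{j-i}$; since $g_t$ is a constant scalar on each block it commutes with the diagonal part $\bar\partial_{\text{diag}}=\oplus_k\bar\partial_k$. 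Hence the $(i,j)$-entry of $g_t^{-1}\circ\bar\partial_E\circ g_t$ is $t^{j-i}\beta_{ij}$, which tends to $0$ as $t\to\infty$ for every $i>j$, and the $(i,j)$-entry of $g_t^{-1}\circ(t\theta)\circ g_t$ is $t^{j-i+1}\theta_{ij}$, which equals $\gamma_k$ on the subdiagonal (where $j-i+1=0$) and tends to $0$ for each $a_{ij}$ below the subdiagonal (where $j-i+1<0$). Therefore the limit is precisely $(\oplus_i\bar\partial_i,\ \oplus_k\gamma_k)$, which is the graded Higgs bundle \eqref{graded bundle}.

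Next I would observe that this limit is independent of the choice of $h_0$: although the smooth splitting $E=\oplus_i L_i$ depends on $h_0$, the resulting holomorphic object $(\oplus_i G_{i-1}/G_i,\ \oplus_i \text{Gr}_i\theta)$ is canonically determined by the intrinsic filtration $\mathcal G$ of $(E,\theta)$, as recorded in Section \ref{generically regular}, so the statement is well posed. With the identification $\lim_{t\to\infty}g_t\cdot(E,t\theta)=\text{Gr}_{\mathcal G}(E,\theta)$ in hand, the corollary becomes a direct translation of Theorem \ref{main theoremLater}: the hypothesis that $(E,\theta)$ admits a harmonic metric $h_0$ yields, by that theorem, a unique maximal harmonic metric $h$ on $\text{Gr}_{\mathcal G}(E,\theta)$ with $\det(h)=\det(h_0)$ and $h$ weakly dominating $h_0$ with respect to $\mathcal G$ in the sense of Definition \ref{weakly dominategen}.

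I do not anticipate a substantive obstacle here, since the argument reduces to bookkeeping the powers of $t$ under conjugation by $g_t$ together with a citation of the main theorem. The only point requiring care is that bookkeeping itself: confirming that the subdiagonal Higgs entries $\gamma_k$ are exactly the terms of total weight $t^0$ and hence survive the limit, while all strictly lower entries of both $\bar\partial_E$ and $\theta$ carry strictly negative powers of $t$ and decay, so that the limiting object is genuinely the metric-independent graded Higgs bundle to which Theorem \ref{main theoremLater} applies.
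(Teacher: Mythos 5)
Your proposal is correct and follows essentially the same route as the paper: the paper likewise sets up the smooth decomposition of Section \ref{subsection:smoothdecom} with respect to a fixed metric, notes that conjugation by $g_t=\mathrm{diag}(1,t,\dots,t^{n-1})$ kills the strictly lower entries $\beta_{ij}$ and $a_{ij}$ while preserving the $\gamma_k$, identifies the $t\to\infty$ limit with the metric-independent graded Higgs bundle, and then presents the corollary as a direct rephrasing of Theorem \ref{main theoremLater}. Your power-of-$t$ bookkeeping and the observation that weak domination follows from the (stronger) strict weak domination in the theorem are both accurate.
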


This result can be seen as a complement to Li-Mochizuki \cite[Theorem D]{LiMochizukiHitchinSection} for \(t\) approaching \(0\). 

\begin{ass}\label{assumption}
There exists an increasing full holomorphic filtration $\mathcal F=\{F_k\}$ of $E$ such that $\theta$ takes $F_k$ to $F_{k+1}\otimes K_X$ such that the induced maps on $F_k/F_{k-1}(k=1,\cdots,n-1)$ are non-zero. 
\end{ass}

Suppose $(E,\theta)$ satisfies Assumption \ref{assumption}. Fixing a Hermitian metric \(h_0\) on $E$, take $L_{i}$ to be the perpendicular line bundle of $F_{i-1}$ inside $F_{i}$ with respect to $h_0$, with the induced quotient holomorphic structure. We can express $E$ as a smooth direct sum: 
\[E=L_1\oplus L_2\oplus\cdots\oplus L_n.\]
With respect to this decomposition, the holomorphic structure $\bar\partial_E$ is strictly upper triangular and $\theta$ satisfies $\theta_{ij}=0$ for $i> j+1.$

Consider the gauge transformation \(g_t = \text{diag}(1, t, \ldots, t^{n-1})\) with respect to this smooth decomposition. Then,
\[
g_t \cdot (\bar{\partial}_E, t\theta) = (g_t^{-1} \circ \bar{\partial}_E \circ g_t, g_t^{-1} \circ t\theta \circ g_t).
\]
Letting \(t\) approach \(0\), we obtain the graded Higgs bundle \(\text{Gr}_{\mathcal{F}}(E, \theta)\), a CVHS of type $(1,\cdots,1)$. It does not depend on the initial choice of the metric $h_0.$
\begin{theorem}(\cite[Theorem D]{LiMochizukiHitchinSection})
Suppose $(E,\theta)$ satisfies Assumption \ref{assumption} with a filtration $\mathcal F$.  If $Gr_{\mathcal F}(E,\theta)=\lim\limits_{t\rightarrow 0}g_t\cdot (\bar\partial_E, t\theta)$ admits a diagonal harmonic metric $h_0$, then there exists a harmonic metric on $(E,\theta)$ which weakly dominates the metric $h_0$ with respect to $\mathcal F$. 
\end{theorem}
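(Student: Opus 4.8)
The plan is to reduce the statement to solving the full (endomorphism‑valued) Hitchin equation on $(E,\theta)$ with the graded harmonic metric $h_0$ as initial datum, and to run the exhaustion‑plus‑barrier scheme of Section \ref{Kalka-Yang}, now for the matrix equation rather than the scalar system. Using the $\mathbb C^*$‑gauge $g_t$ one transports $h_0$ to a (generally non‑harmonic) Hermitian metric on $(E,\theta)$, still denoted $h_0$; since existence of a harmonic metric is gauge‑invariant, it suffices to produce a harmonic metric $h$ on $(E,\theta)$ with $\det(h|_{F_k})\ge\det(h_0|_{F_k})$ for all $k$. Writing $h=h_0\,e^{s}$ with $s$ self‑adjoint and $\operatorname{tr}s=0$ turns Hitchin's equation into a quasilinear elliptic system for $s$, and the goal becomes to trap $s$ between a suitable sub‑ and super‑barrier adapted to the increasing filtration $\mathcal F$.

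The first key step is the analog of Proposition \ref{prop 1} for $\mathcal F$. In the smooth decomposition adapted to $\mathcal F$ and $h_0$, the operator $\bar\partial_E$ is strictly upper triangular and $\theta$ has a single subdiagonal block $\gamma_k$ crossing level $k$ downward, the remaining off‑diagonal blocks lying strictly above the subdiagonal. Writing $P_k$ for the $h_0$‑orthogonal projection onto $F_k$, $A_0$ for the second fundamental form of $F_k$ in $(E,\bar\partial_E,h_0)$, and $\theta_{FQ}$ for the block of $\theta$ mapping $E/F_k$ into $F_k$, a direct computation using the graded Hitchin equation for $h_0$ and the Gauss equation gives
\begin{equation*}
\sqrt{-1}\,\Lambda_{\omega_X}\,\text{tr}\!\big(P_k\big(F(\nabla_{h_0})+[\theta,\theta^{*_{h_0}}]\big)P_k\big)=|A_0|^2_{h_0}+|\theta_{FQ}|^2_{h_0}\ \ge\ 0,\qquad k=1,\dots,n-1,
\end{equation*}
with equality exactly when the off‑diagonal data crossing level $k$ vanish. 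This is the mirror of the supersolution inequality \eqref{supersolution global fil eqn}, with the opposite sign because $\mathcal F$ is increasing rather than $\theta$‑invariant; it is the monotone quantity that will control weak domination with respect to $\mathcal F$.

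For the upper barrier I would invoke the universal bound $\|\theta\|^2_{h,g_X}\le\tfrac{n(n^2-1)}{12}$ of \cite{LiNilpotent}, valid for every harmonic metric on a nilpotent Higgs bundle, together with the comparison against complete hyperbolic metrics of curvature $-\delta$ on embedded sub‑disks used in the proof of Lemma \ref{step1}; these bound the filtration determinants of any candidate solution from above. On each $\Omega_j$ of an exhaustion of $X$ one then solves the Dirichlet problem by the super‑subsolution method of Li–Mochizuki quoted above, sandwiching the solution between (the transported) $h_0$ and this barrier, so that the comparison principle Lemma \ref{lem:comparisonprincipal} yields monotonicity in $j$ and Schauder estimates with the bootstrap argument give interior $C^\infty$‑convergence to a harmonic metric $h$ on all of $X$. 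The one‑sided inequality above is preserved under the comparison principle and passes to the limit, yielding $\det(h|_{F_k})\ge\det(h_0|_{F_k})$ for every $k$, which is the asserted weak domination.

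The hard part is the genuinely matrix nature of the problem. Unlike the $\theta$‑invariant filtration of Theorem \ref{main theoremLater}, the blocks $A_0$ and $\theta_{FQ}$ do \emph{not} drop out: the curvature of $\det(h_0|_{F_k})$ is the clean quantity $|\gamma_k|^2$, whereas the monotone trace above carries the extra term $|A_0|^2$, so the filtration determinants do not satisfy the closed decoupled system \eqref{maineq} and Proposition \ref{prop:main1} cannot be applied directly to manufacture $h$. One must instead run the barrier/exhaustion argument for the full endomorphism‑valued equation and, crucially, establish the a priori upper estimate uniformly along the exhaustion on the non‑compact complete surface $X$, so that the off‑diagonal part of $s$ does not degenerate in the limit. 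Reconciling this with the comparison principle — extracting domination from the $n-1$ scalar quantities $\log\det(h|_{F_k})$ while absorbing the matrix coupling terms $|A|^2$ and $|\theta_{FQ}|^2$ into the norm bound — is the technical heart of the argument.
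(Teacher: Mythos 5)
The paper does not prove this statement at all: it is quoted verbatim as an external input, \cite[Theorem D]{LiMochizukiHitchinSection}, and no argument for it appears anywhere in the text. So there is no in-paper proof to compare against; your proposal has to stand on its own as a reconstruction of Li--Mochizuki's argument, and as such it has a genuine gap.

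You correctly identify the overall strategy (exhaust $X$ by bounded domains, solve Dirichlet problems for the Hitchin equation with boundary value the transported $h_0$, control the filtration determinants $\det(h|_{F_k})$ by a comparison principle, and pass to the limit), and your observation that the increasing filtration $\mathcal F$ is not $\theta$-invariant --- so that the analogue of Proposition \ref{prop 1} acquires the extra terms $|A_0|^2$ and $|\theta_{FQ}|^2$ and the determinants do not close up into the decoupled system \eqref{maineq} --- is exactly the right diagnosis. But the proof stops where the work begins. First, the Dirichlet step is not justified: the super-subsolution lemma of Li--Mochizuki quoted in Section \ref{Kalka-Yang} applies to quasimonotone scalar systems $\Delta_g u_k = F_k(x,u)$, not to the endomorphism-valued Hitchin equation for $s$ in $h=h_0e^{s}$; solvability of that Dirichlet problem requires Donaldson-type existence, which you neither invoke nor replace. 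Second, and more seriously, the uniform $C^0$ bound on the full endomorphism $s$ along the exhaustion --- without which there is no convergence --- is not established: the bound $\|\theta\|^2_{h,g_X}\le n(n^2-1)/12$ from \cite{LiNilpotent} holds for complete harmonic metrics on $X$, not a priori for Dirichlet solutions on $\Omega_j$, and controlling the $n-1$ scalar quantities $\log\det(h|_{F_k})$ does not control the off-diagonal part of $s$. You explicitly label this "the technical heart of the argument" and leave it open, which means the proposal is an outline of the difficulty rather than a proof. Finally, the sign bookkeeping in your "first key step" needs care: $\sqrt{-1}\Lambda_{\omega_X}\operatorname{tr}(P_k[\theta,\theta^{*}]P_k)$ is a difference $\|\theta_{FQ}\|^2-\|\theta_{QF}\|^2$ of indefinite sign, and the cancellation of the $-\|\gamma_k\|^2$ term against the graded Hitchin equation for $h_0$ is asserted rather than computed.
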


By Theorem \ref{main theoremLater}, we can slightly improve the result as follows. 
\begin{prop} Suppose $(E,\theta)$ satisfies Assumption \ref{assumption} with a filtration $\mathcal F$. 
 If $Gr_{\mathcal F}(E,\theta)=\lim\limits_{t\rightarrow 0}g_t\cdot (\bar\partial_E, t\theta)$ admits a harmonic metric $h_0$, then there exists a harmonic metric on $(E,\theta)$. 
\end{prop}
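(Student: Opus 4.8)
The plan is to reduce everything to the already-quoted Theorem D by upgrading the arbitrary harmonic metric on the graded bundle into a \emph{diagonal} one. The entire gap between the hypothesis here and the hypothesis of Theorem D is that Theorem D demands a diagonal harmonic metric $h_0$ on $Gr_{\mathcal F}(E,\theta)$, whereas here we only assume that \emph{some} harmonic metric exists on it. So the first thing I would record is that $Gr_{\mathcal F}(E,\theta)$ is a CVHS of type $(1,\cdots,1)$: by Assumption \ref{assumption} the maps induced by $\theta$ on the successive quotients $F_k/F_{k-1}$ are non-zero, which is exactly the non-degeneracy of the graded Higgs field, so the limit $\lim_{t\to 0}g_t\cdot(\bar\partial_E,t\theta)$ has the shape (\ref{graded bundle}) with every $\gamma_i\neq 0$.

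Next I would apply Theorem \ref{main theoremLater} to the Higgs bundle $Gr_{\mathcal F}(E,\theta)$ regarded as a generically regular nilpotent Higgs bundle in its own right. Since a CVHS of type $(1,\cdots,1)$ is canonically isomorphic to its own graded bundle (as recorded in Section \ref{generically regular}, where $G_{i-1}/G_i\cong L_i$ and $Gr_{\mathcal G}(E,\theta)\cong(E,\theta)$), and since $Gr_{\mathcal F}(E,\theta)$ carries the harmonic metric $h_0$ by hypothesis, Theorem \ref{main theoremLater} produces a maximal harmonic metric $h_{\max}$ on $Gr_{\mathcal F}(E,\theta)$, and this metric is \emph{diagonal}. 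This is the decisive upgrade: mere existence of a harmonic metric on the CVHS forces existence of a diagonal one.

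Finally I would feed the diagonal harmonic metric $h_{\max}$ into Theorem D, which then yields a harmonic metric on $(E,\theta)$ (in fact one weakly dominating $h_{\max}$ with respect to $\mathcal F$, though we only need its existence). This completes the argument.

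I do not anticipate a genuine analytic obstacle, since the proposition is a clean composition of Theorem \ref{main theoremLater} (used to diagonalize) with Theorem D (used to descend along the $\mathbb C^*$-flow). The only point that warrants care is the bookkeeping of the two filtrations: Theorem \ref{main theoremLater} is phrased in terms of the \emph{canonical} filtration $\mathcal G$ of a generically regular nilpotent Higgs bundle, so I must check that, for the CVHS $Gr_{\mathcal F}(E,\theta)$, its canonical filtration coincides with the grading used to define ``diagonal'' in Theorem D. Since for a CVHS of type $(1,\cdots,1)$ one has $G_i=\bigoplus_{k\ge i+1}L_k$, the notion of diagonal from Theorem \ref{main theoremLater} agrees with the notion of diagonal from Theorem D, so the two statements splice together without ambiguity.
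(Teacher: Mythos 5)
Your proposal is correct and is exactly the argument the paper intends (the paper gives no written proof beyond the phrase ``By Theorem \ref{main theoremLater}, we can slightly improve the result''): apply Theorem \ref{main theoremLater} to the CVHS $Gr_{\mathcal F}(E,\theta)$, which is its own graded bundle, to upgrade the given harmonic metric to a diagonal one, and then invoke Li--Mochizuki's Theorem D. Your closing remark about matching the two notions of ``diagonal'' is the right point to check, and it holds for the reason you give.
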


Next, we have the following statement. 
\begin{theorem}
For a generically regular nilpotent Higgs bundle $(E,\theta)$,  we obtain that $(E,\theta)$ satisfies Assumption \ref{assumption} with a filtration $\mathcal F$. Moreover, if $\text{Gr}_{\mathcal F}(E,\theta)=\lim\limits_{t\rightarrow 0}g_t\cdot (\bar\partial_E, t\theta)$ admits a diagonal harmonic metric $h_0$, then there exists a harmonic metric on $(E,\theta)$ which weakly dominates the metric $h_0$ with respect to $\mathcal F$. 
\end{theorem}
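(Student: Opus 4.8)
The statement has two independent parts. The first asserts that every generically regular nilpotent \((E,\theta)\) carries an increasing holomorphic filtration \(\mathcal F\) of the type demanded by Assumption \ref{assumption}; the second asserts that, given such an \(\mathcal F\), a diagonal harmonic metric on \(\mathrm{Gr}_{\mathcal F}(E,\theta)\) produces a harmonic metric on \((E,\theta)\) dominating it. The second part requires no new work: it is exactly the Li--Mochizuki statement \cite[Theorem D]{LiMochizukiHitchinSection} quoted above, applied to the \(\mathcal F\) produced in the first part, and the dominance assertion is inherited verbatim. So the whole content is the construction of \(\mathcal F\).

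My plan for constructing \(\mathcal F\) is to use a generically cyclic global section and take iterated images. Since \(X\) is a non-compact Riemann surface it is Stein, and both \(E\) and \(K_X\) are holomorphically trivial; after fixing trivializations I would write \(\theta = A\,dz\) with \(A\colon X\to \mathrm{Mat}_n(\mathbb C)\) holomorphic, satisfying \(A^n\equiv 0\) and \(A^{n-1}\not\equiv 0\). First I would show that a generic constant vector \(v_0\in\mathbb C^n\) is cyclic for \(A(z)\) at a generic point: the function \(f(v,z)=\det\bigl[\,v\mid A(z)v\mid\cdots\mid A(z)^{n-1}v\,\bigr]\) is holomorphic on \(\mathbb C^n\times X\) and not identically zero, because at any point where \(A\) is regular nilpotent (i.e.\ \(A^{n-1}\neq 0\), an open dense set) some vector is cyclic. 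Hence the bad set \(\{v_0 : f(v_0,\cdot)\equiv 0 \text{ on } X\}\) is a proper analytic subset of \(\mathbb C^n\), and a generic \(v_0\) avoids it.

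Fixing such a \(v_0\), I would define \(F_k\) to be the saturation in \(E\) of the subsheaf generated by \(v_0, Av_0,\dots, A^{k-1}v_0\). Generic cyclicity makes these sections generically independent, so each \(F_k\) is a rank-\(k\) holomorphic subbundle and \(\mathcal F=\{0=F_0\subset F_1\subset\cdots\subset F_n=E\}\) is full. The inclusion \(\theta(F_k)\subset F_{k+1}\otimes K_X\) holds on the dense open set where the \(A^jv_0\) are independent, and since both sides are saturated subsheaves (hence subbundles, \(X\) being a curve) it then holds identically. Finally \(\mathrm{Gr}_k\theta\colon F_k/F_{k-1}\to (F_{k+1}/F_k)\otimes K_X\) sends the class of \(A^{k-1}v_0\) to that of \(A^kv_0\), nonzero on the same dense set, so \(\mathrm{Gr}_k\theta\not\equiv 0\) for \(k=1,\dots,n-1\). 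This verifies Assumption \ref{assumption}, and \(\mathrm{Gr}_{\mathcal F}(E,\theta)=\bigoplus_k F_k/F_{k-1}\) is a CVHS of type \((1,\dots,1)\), which one identifies with \(\lim_{t\to 0} g_t\cdot(\bar\partial_E, t\theta)\) as in the discussion above.

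The main obstacle is precisely this construction. In contrast to the canonical \emph{decreasing} filtration \(\mathcal G=\{\ker\theta^{\,n-i}\}\), there is no canonical \emph{increasing} filtration with the shift-up property \(\mathrm{Gr}_k\theta\neq 0\): the required cyclic line subbundle \(F_1\) amounts to a holomorphic splitting of \(E\to E/G_1\), whose obstruction lies in \(H^1(X,\mathrm{Hom}(E/G_1,G_1))\) and need not vanish over a compact base. Non-compactness is exactly what saves the argument, via triviality of holomorphic bundles and the consequent freedom to choose a global cyclic section; the two points needing care are (i) that a single \(v_0\) is simultaneously cyclic at generically all points, handled by the genericity argument on \(f\), and (ii) that saturation preserves both the filtration inclusion and the non-vanishing of the graded Higgs maps. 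With \(\mathcal F\) in hand, the cited Theorem D finishes the proof.
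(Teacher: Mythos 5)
Your argument is correct, but it takes a different route from the paper on the only substantive point, namely the construction of the filtration $\mathcal F$. The paper's proof is two lines: since $\theta^{n-1}\neq 0$ there is a point $p$ and a vector $v\in E_p$ with $\theta^{n-1}(v)\neq 0$, so $v$ is a cyclic vector of $(E,\theta)$ at $p$, and the existence of $\mathcal F$ satisfying Assumption \ref{assumption} is then obtained by citing \cite[Proposition 4.26]{LiMochizukiHitchinSection}; the ``moreover'' part is, as in your write-up, exactly \cite[Theorem D]{LiMochizukiHitchinSection}. You instead re-prove the cited proposition from scratch: trivialize $E$ and $K_X$ over the open Riemann surface, show a generic constant vector $v_0$ is generically cyclic via the non-vanishing of $f(v,z)=\det[v\mid A(z)v\mid\cdots\mid A(z)^{n-1}v]$, and take $F_k$ to be the saturation of the subsheaf generated by $v_0,\dots,A^{k-1}v_0$; the filtration inclusion $\theta(F_k)\subset F_{k+1}\otimes K_X$ and the non-vanishing of $\mathrm{Gr}_k\theta$ propagate from the dense open set where $f(v_0,\cdot)\neq 0$ because the relevant maps between subbundles of a bundle over a curve vanish identically once they vanish on a dense set. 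All the steps check out (saturation on a curve yields subbundles, the bad set of $v_0$ is contained in the proper algebraic set $\{f(\cdot,z_0)=0\}$ for a single $z_0$ where $A(z_0)$ is regular nilpotent), so your version has the merit of being self-contained where the paper defers to an external reference. Two minor quibbles: your aside that $F_1$ ``amounts to a holomorphic splitting of $E\to E/G_1$'' overstates what is needed (one only needs a line subbundle mapping generically isomorphically onto $E/G_1$, not an actual splitting), and correspondingly the claim that non-compactness is essential to the existence of $\mathcal F$ is not really justified; but neither remark is load-bearing.
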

\begin{proof}
Since $\theta^{n-1}\neq 0$, there exists a point $p\in X$ and a vector $v\in E_p$ such that $\theta^{n-1}(v)\neq 0$. Hence $v, \theta(v),\cdots, \theta^{n-1}(v)$ span $E_p$. That is, $v$ is a cyclic vector of $E$ at $p$. By \cite[Proposition 4.26]{LiMochizukiHitchinSection}, we obtain that $(E,\theta)$ satisfies Assumption \ref{assumption} with a filtration $\mathcal F$.    
\end{proof}

\subsection{$SL(2,\mathbb C)$-case and minimal surfaces in $\mathbb H^3$}\label{n=2}
Let $(E,\theta)$, where $\theta\neq 0$, be an $SL(2,\mathbb{C})$-Higgs bundle over a non-compact hyperbolic Riemann surface $X$. In the context of non-Abelian Hodge theory, a harmonic metric corresponds to a $\rho$-equivariant harmonic map $f:\widetilde{X}\rightarrow SL(2,\mathbb{C})/SU(2)\cong \mathbb{H}^3$, where $\widetilde{X}$ is the universal cover of $X$, and $\rho:\pi_1(X)\rightarrow SL(2,\mathbb{C})$ is the holonomy representation of the flat connection $\nabla=\nabla_{h}+\theta+\theta^{*_h}$. The Hopf differential associated with this setup is given by $\text{tr}(\theta^2)$.

Now, assume $(E, \theta)$ is nilpotent. In this case, the Hopf differential vanishes, implying that the harmonic map $f$ is conformal and thus corresponds to a branched minimal surface in $\mathbb{H}^3$. The branched points of this surface coincides with the zero set of $\theta$, since the pullback metric is $\text{tr}(\theta\theta^{*_h})$. Conversely, any equivariant branched minimal disk in $\mathbb{H}^3$ arises from a nilpotent $SL(2,\mathbb{C})$-Higgs bundle $(E,\theta)$ equipped with a harmonic metric $h$. Then $(E,\theta,h)$ is of the following form:
\[E=L\oplus L^{-1}, \bar\partial_E=\begin{pmatrix}\bar\partial_L&0\\\beta&\bar\partial_{L^{-1}}\end{pmatrix}, \theta=\begin{pmatrix}
0 & 0\\
\gamma & 0
\end{pmatrix}, h=h|_L\oplus (h|_L)^{-1}.\]
The graded Higgs bundle with respect to the canonical filtration is
$$E^0=L\oplus L^{-1},\quad\bar\partial_{E^0}=\begin{pmatrix}\bar\partial_L&0\\0&\bar\partial_{L^{-1}}\end{pmatrix}\quad \theta^0=\left(
\begin{array}{cc}
0 & 0\\
\gamma & 0
\end{array}
\right).$$ According to Theorem \ref{main theoremLater}, $(E^0,\theta^0)$
has a diagonal harmonic metric \[h^0=h^0|_L\oplus h^0|_{L^{-1}}\] satisfying $h^0|_{L^{-1}}=(h^0|_L)^{-1}$ and $h^0|_L>h|_L$ unless $(E,\theta)=(E^0,\theta^0)$ and $h^0=h$. This corresponds to an equivariant harmonic map $f^0:\widetilde{X}\rightarrow SL(2,\mathbb{R})/SO(2)\cong \mathbb{D}.$ The Hopf differential of $f^0$ also vanishes. From \cite[Proposition on Page 10]{SchoenYauHarmonicMaps},  $f^0$ is either holomorphic or anti-holomorphic. By a reflection, we may assume $f^0$ is holomorphic. This leads us to the following result.

\begin{theorem}\label{branched minimal}
    Let \(f: \widetilde{X} \to \mathbb{H}^3\) be an equivariant branched minimal disk which does not lie in a totally geodesic copy of $\mathbb D$. Then it induces an equivariant holomorphic map \(f^0: \widetilde{X} \to \mathbb{D}\) satisfying \[0<\frac{g_f}{g_{f_0}}<1.\] In particular, the branched points of \(f\) coincide with the critical points of \(f^0\).
\end{theorem}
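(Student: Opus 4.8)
The plan is to translate the geometric statement about the branched minimal disk $f$ into the language of nilpotent $SL(2,\mathbb{C})$-Higgs bundles and then invoke Theorem \ref{main theoremLater} in the rank-two case. First I would recall the established correspondence, already assembled in the paragraph preceding the statement: an equivariant branched minimal disk $f\colon \widetilde{X}\to\mathbb{H}^3$ arises from a nilpotent $SL(2,\mathbb{C})$-Higgs bundle $(E,\theta)$ with $\theta\neq 0$ equipped with a harmonic metric $h$, where $E=L\oplus L^{-1}$, the Higgs field is the lower-triangular nilpotent with entry $\gamma$, and $h=h|_L\oplus(h|_L)^{-1}$. The hypothesis that $f$ does not lie in a totally geodesic copy of $\mathbb{D}$ is precisely the statement that $(E,\theta,h)$ is \emph{not} already its own graded object, i.e. the off-diagonal term $\beta$ of $\bar\partial_E$ does not vanish, so that $(E,\theta)$ is not a CVHS with $h=h_{\max}$.

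Next I would apply Theorem \ref{main theoremLater} to this $(E,\theta)$. Since the assumed harmonic metric $h$ exists, the theorem produces a maximal harmonic metric $h^0$ on the graded Higgs bundle $\mathrm{Gr}_{\mathcal G}(E,\theta)=(E^0,\theta^0)$, and this $h^0$ is diagonal, $h^0=h^0|_L\oplus(h^0|_L)^{-1}$, with $\det(h^0)=\det(h)$ and $h^0$ strictly weakly dominating $h$. Concretely, strict weak domination in rank two reads $h^0|_L>h|_L$, which after taking the logarithmic conformal factors gives the strict inequality $0<g_f/g_{f^0}<1$: the induced metric of the diagonal (maximal) solution dominates that of $f$, while both are finite and nonzero away from the isolated zeros of $\gamma$. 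The graded datum $(E^0,\theta^0,h^0)$ corresponds to an equivariant harmonic map $f^0\colon\widetilde{X}\to SL(2,\mathbb R)/SO(2)\cong\mathbb{D}$ whose Hopf differential $\mathrm{tr}((\theta^0)^2)$ still vanishes; by the Schoen--Yau dichotomy \cite{SchoenYauHarmonicMaps} such a harmonic map into $\mathbb{D}$ is either holomorphic or anti-holomorphic, and after a reflection we take it holomorphic.

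Finally I would identify the branched points. For the original map the pullback metric is $g_f=\mathrm{tr}(\theta\theta^{*_h})=2|\gamma|^2 (h|_L)^{-2}\,|dz|^2$ in a local frame, so the branch locus of $f$ is exactly the zero set of $\gamma$; likewise $g_{f^0}=2|\gamma|^2(h^0|_L)^{-2}\,|dz|^2$, whose degeneracy locus (the critical set of the holomorphic $f^0$) is again the zero set of $\gamma$. Since the conformal factors $(h|_L)^{-2}$ and $(h^0|_L)^{-2}$ are everywhere positive and smooth, the two zero loci coincide, giving the asserted equality of branch points and critical points together with $0<g_f/g_{f^0}<1$.

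The main obstacle is the careful bookkeeping that makes the single rank-two scalar inequality $h^0|_L>h|_L$ equivalent to the claimed ratio bound: one must check that the $SL(2)$ normalization $\det=1$ is preserved so that the conformal factor on $L^{-1}$ is forced to be the reciprocal, confirm that the strict-versus-equality alternative of Theorem \ref{main theoremLater} lands in the strict case precisely because $\beta\neq 0$ (the non-totally-geodesic hypothesis), and verify that neither $g_f$ nor $g_{f^0}$ degenerates except at the common zeros of $\gamma$, so that the ratio is genuinely strictly between $0$ and $1$ on the complement of the branch locus.
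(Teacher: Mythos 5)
Your proposal follows the paper's own proof essentially step for step: the same translation of the branched minimal disk into a nilpotent $SL(2,\mathbb C)$-Higgs pair $(E,\theta,h)$ with $E=L\oplus L^{-1}$, the same invocation of Theorem \ref{main theoremLater} to produce the diagonal maximal harmonic metric $h^0$ on the graded bundle (with the non-totally-geodesic hypothesis ruling out the exceptional case), the same appeal to Schoen--Yau to make $f^0$ holomorphic, and the same comparison of $g_f=\mathrm{tr}(\theta\theta^{*_h})$ with $g_{f^0}$ and of the common zero locus of $\gamma$. The one caveat, which you inherit from the paper's own writeup, is the direction of the scalar inequality: since $G_1=\ker\theta=L^{-1}$, strict weak domination reads $h^0|_{L^{-1}}>h|_{L^{-1}}$, i.e.\ $h^0|_L<h|_L$, and it is this direction that yields $g_f=|\gamma|^2(h|_L)^{-2}<|\gamma|^2(h^0|_L)^{-2}=g_{f^0}$; as literally written, ``$h^0|_L>h|_L$'' would give the opposite ratio.
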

\begin{proof}
First, $g_f=\text{tr}(\theta\theta^{*_h})=|\gamma|^2(h|_L)^{-2}<|\gamma|^2(h^0|_L)^{-2}=\text{tr}(\theta^0(\theta^0)^{*_{h^0}})=g_{f^0}.$ Secondly, since the critical set of $f$ coincides with the one of $f^0$ with multiplicity counted, we have $\frac{g_f}{g_{f_0}}>0.$
\end{proof}

Note that an equivariant holomorphic map $f^0:\widetilde{X}\rightarrow\mathbb{D}$ automatically extends to an equivariant branched minimal disk in $\mathbb{H}^3$ by composing with a totally geodesic embedding $i:\mathbb{D}\rightarrow\mathbb{H}^3$. We can also state the result as follows. 
\begin{prop}\label{branched minimal1}
Let $D=\{z_j\}$ be a sequence in $\mathbb{D}$. Then the following statements are equivalent. \\
1. There is an equivariant branched conformal minimal map from $\tilde X\rightarrow \mathbb H^3$ with branch set $D$.\\
2. There is an equivariant holomorphic map from $\tilde X\rightarrow \mathbb H^2$ with critical set $D$.
\end{prop}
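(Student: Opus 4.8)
The plan is to prove the two implications separately, recognising that Proposition \ref{branched minimal1} is essentially a restatement of Theorem \ref{branched minimal} supplemented by the observation that an equivariant holomorphic disk extends to $\mathbb{H}^3$ through a totally geodesic embedding. Throughout I identify $\mathbb{H}^2$ with $\mathbb{D}$ and $\widetilde{X}$ with the Poincar\'e disk, and I keep track of the equivariance representation under each construction, assuming the maps involved are nonconstant so that the branch and critical sets are the discrete zero loci of the relevant differentials.

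For $(2)\Rightarrow(1)$ I would start from an equivariant holomorphic map $f^0\colon \widetilde{X}\to\mathbb{H}^2$ with critical set $D$, equivariant for some $\rho_0\colon\pi_1(X)\to\mathrm{Isom}(\mathbb{H}^2)$; such an $f^0$ is automatically harmonic and conformal. Fixing a totally geodesic isometric embedding $\iota\colon\mathbb{H}^2\hookrightarrow\mathbb{H}^3$ and setting $f:=\iota\circ f^0$, the composite is harmonic (a harmonic map followed by a totally geodesic map is harmonic) and conformal (a conformal map followed by an isometric immersion is conformal), hence a branched conformal minimal map; it is equivariant for the representation obtained by composing $\rho_0$ with the inclusion $\mathrm{Isom}(\mathbb{H}^2)\hookrightarrow\mathrm{Isom}(\mathbb{H}^3)$ induced by $SL(2,\mathbb{R})\subset SL(2,\mathbb{C})$. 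Since $d\iota$ is injective, $df$ and $df^0$ vanish at exactly the same points, so the branch set of $f$ equals the critical set $D$ of $f^0$. This direction is pure bookkeeping.

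For $(1)\Rightarrow(2)$ I would start from an equivariant branched conformal minimal map $f$ with branch set $D$ and split into two cases. If $f$ does not lie in a totally geodesic copy of $\mathbb{D}$, then Theorem \ref{branched minimal} applies verbatim and yields an equivariant holomorphic $f^0\colon\widetilde{X}\to\mathbb{D}\cong\mathbb{H}^2$ whose critical points are exactly the branch points of $f$, i.e.\ whose critical set is $D$. The remaining case, where the image of $f$ lies in a totally geodesic $\mathbb{H}^2\subset\mathbb{H}^3$, is the genuine obstacle, since Theorem \ref{branched minimal} explicitly excludes it and must be replaced by a direct argument. Here I would first note that a nonconstant conformal map cannot have image contained in a geodesic (conformality forces the pullback metric to have rank $0$ or $2$), so the totally geodesic subspace it spans is genuinely two-dimensional and, by equivariance, is preserved by $\rho$; thus $f$ is an equivariant harmonic map into the hyperbolic surface $\mathbb{H}^2$ with vanishing Hopf differential. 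The Schoen--Yau dichotomy quoted above (\cite{SchoenYauHarmonicMaps}) then forces $f$ to be holomorphic or anti-holomorphic, and after a reflection I may take $f^0=f$, whose critical set coincides with the branch set $D$. The only nontrivial point in the whole proposition is therefore this degenerate case, where the Higgs-bundle input of Theorem \ref{branched minimal} is swapped for the Schoen--Yau argument; everything else amounts to checking that conformality, harmonicity, equivariance, and the zero locus of the differential are preserved under an isometric totally geodesic embedding.
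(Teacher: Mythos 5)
Your proof is correct and follows essentially the same route the paper intends: direction $(2)\Rightarrow(1)$ is the paper's remark that a holomorphic disk extends via a totally geodesic embedding $\mathbb{D}\hookrightarrow\mathbb{H}^3$, and direction $(1)\Rightarrow(2)$ is Theorem \ref{branched minimal}. You are in fact slightly more careful than the paper, which leaves the proposition unproved and whose Theorem \ref{branched minimal} excludes the case where $f$ lies in a totally geodesic $\mathbb{D}$; your direct Schoen--Yau argument for that degenerate case correctly fills this small gap.
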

Kraus \cite{Kraus} characterized the set of critical points of a holomorphic map from $\mathbb{D}$ to itself. 
\begin{theorem}(\cite[Theorem 1.1]{Kraus})
Let $D=\{z_j\}$ be a sequence in $\mathbb{D}$. Then the following statements are equivalent.\\
(a) There is a holomorphic map from $\mathbb{D}$ to itself with critical set $D$.\\
(b) There is an indestructible Blaschke product with critical set $D$.\\
(c) There is a function in the weighted Bergman space $\mathcal{A}$ with zero set $D$.\\
(d) There is a holomorphic function in the Nevanlinna class $\mathcal{N}$ with critical set $D$.
\end{theorem}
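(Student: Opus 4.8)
The plan is to route all four conditions through a single geometric object: a conformal pseudo-metric of constant negative curvature on $\mathbb{D}$ with prescribed logarithmic singularities along $D$, dominated by the Poincaré metric. This is exactly the $n=2$ avatar of the prescribed-curvature circle of ideas used earlier in the paper. Given a holomorphic self-map $\varphi:\mathbb{D}\to\mathbb{D}$, I would pull back the hyperbolic metric $\lambda_{\mathbb{D}}$ to the density $\mu=(\lambda_{\mathbb{D}}\circ\varphi)\,|\varphi'|$. Away from $\mathrm{crit}(\varphi)$ the metric $\mu$ still has constant curvature by conformal naturality, at a critical point $z_j$ of multiplicity $m_j$ the function $\log\mu$ acquires a singularity $m_j\log|z-z_j|+O(1)$, and Schwarz--Pick gives $\mu\le\lambda_{\mathbb{D}}$. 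Conversely, by the Liouville/developing-map correspondence, any maximal such dominated metric with these singularities equals $\varphi^{*}\lambda_{\mathbb{D}}$ for a self-map $\varphi$, unique up to post-composition by $\mathrm{Aut}(\mathbb{D})$. This dictionary makes (a) equivalent to the existence of a maximal singular conformal metric of constant negative curvature, dominated by $\lambda_{\mathbb{D}}$, with singular set exactly $D$.

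With the dictionary in hand, the easy implications come first. Condition (b)$\Rightarrow$(a) is immediate, since an indestructible Blaschke product is in particular a self-map. For (a)$\Rightarrow$(b) I would show that the maximal dominated metric produced above is realized by the extremal self-map, and that extremality forces this map to be an indestructible Blaschke product: the absence of a singular inner factor together with the Frostman condition that every shift $(B-a)/(1-\bar a B)$ remain a Blaschke product is exactly what extremality provides. A normal-families/extremal argument then upgrades a general self-map with critical set $D$ to such a $B$ with the same critical set.

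The analytic heart is (a)$\Leftrightarrow$(c). Writing $\mu=e^{u}\lambda_{\mathbb{D}}$ with $u\le 0$, the density solves a singular Liouville equation $\Delta u = c\,\lambda_{\mathbb{D}}^{2}(e^{2u}-1)$ with point sources of weight $m_j$ at the $z_j$, and I would produce the maximal dominated solution by a singular version of the sub-/supersolution scheme behind Proposition \ref{prop:main1}, using $u\equiv 0$ (the hyperbolic metric itself) as the natural upper barrier. The key is a sharp two-sided estimate showing that the nonnegative potential $\log(\lambda_{\mathbb{D}}/\mu)$ has finite Riesz mass and boundary integral precisely when $D$ is the zero set of a function $g$ with $\iint_{\mathbb{D}}|g|^{2}(1-|z|^{2})\,d\sigma<\infty$, the weight $(1-|z|^{2})$ being dictated by the boundary growth of the hyperbolic density. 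Thus the solvability threshold of the singular PDE matches the weighted-Bergman summability of $D$ exactly. Finally (a)$\Leftrightarrow$(d) follows by comparing summability conditions: the extremal Blaschke product from the previous step is bounded, hence lies in $\mathcal{N}$, giving (a)$\Rightarrow$(d), while Jensen/Nevanlinna counting estimates on an $\mathcal{N}$-function with critical set $D$ recover the Bergman condition and hence (a).

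I expect the main obstacle to be exactly the matching in (a)$\Leftrightarrow$(c): building the existence theory for maximal dominated conformal metrics with prescribed logarithmic singularities, and then proving the \emph{sharp} equivalence between finiteness of the associated potential and the weighted-Bergman condition on $D$. Obtaining a two-sided (necessary and sufficient) statement, rather than a one-sided estimate, demands careful Green's-function and boundary analysis of the singular Liouville equation, which is considerably more delicate than the nonsingular Kalka--Yang situation treated earlier in the paper.
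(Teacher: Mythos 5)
The paper does not prove this statement: it is quoted verbatim from Kraus's paper as an external result (Theorem 1.1 of \cite{Kraus}), so there is no internal proof to compare against. Measured against Kraus's actual argument, your outline follows the same circle of ideas --- the dictionary between holomorphic self-maps of $\mathbb{D}$ and conformal pseudo-metrics of constant negative curvature dominated by the Poincar\'e metric, the Liouville/developing-map correspondence, and Heins-style extremality to produce the indestructible Blaschke product --- so the strategy is the right one and is not a genuinely different route.

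However, as a proof the proposal has a real gap, and you have in effect located it yourself. The implication (a)$\Rightarrow$(c) is elementary: if $\varphi:\mathbb{D}\to\mathbb{D}$ is holomorphic then $\varphi\in H^2$, and the Littlewood--Paley identity gives $\iint_{\mathbb{D}}|\varphi'|^2(1-|z|^2)\,d\sigma<\infty$, i.e.\ $\varphi'\in\mathcal{A}$ with zero set $D$. The entire difficulty sits in the converse (c)$\Rightarrow$(a): given $g\in\mathcal{A}$ vanishing exactly on $D$, one must actually solve the singular Liouville problem --- e.g.\ produce a bounded solution of $\Delta u=4|g|^2e^{2u}$ so that $|g|e^u\,|dz|$ is a pseudo-metric of curvature $-4$ with zero set $D$, and then invoke Heins's theorem that the maximal such dominated pseudo-metric is developable, yielding the self-map. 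Your text replaces this construction with the assertion that ``the solvability threshold of the singular PDE matches the weighted-Bergman summability of $D$ exactly,'' which is precisely the theorem to be proved, not a step toward it; the ``sharp two-sided estimate'' you flag as the main obstacle is the content of Kraus's proof and is not supplied. Note also that the sub-/supersolution machinery of Proposition \ref{prop:main1} in this paper does not directly apply here, since it is formulated for smooth right-hand sides without prescribed logarithmic singularities and presupposes a subsolution, which is exactly what the Bergman hypothesis must be used to manufacture. Similarly, in (a)$\Rightarrow$(b) the claim that ``extremality provides'' the Frostman indestructibility condition needs the quantitative input of Heins's maximality theorem; as written it is a restatement of the goal.
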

A Blaschke product $B$ is of the form $B(z)=\prod\limits_{j=1}^{\infty}\frac{\bar{z}_j}{|z_j|}\frac{z_j-z}{1-\bar{z}_jz}.$ A Blaschke product $B$ is said to be indestructible if $T\circ B$ is a Blaschke product
for every unit disc automorphism $T$. 
A holomorphic function $f$ in $\mathbb{D}$ belongs to $\mathcal{N}$ if and only if the integrals $$\int_{0}^{2\pi}\log^+|f(re^{it})|dt$$
remains bounded as $r\rightarrow 1$.

\appendix
\section{Maximum Principle}
The key tool is the maximum principle for systems. The following maximum principle is well-known in the literature, for example \cite{LpezGmez1994TheMP}. We include the proof here for the convenience of the readers and later use.
\begin{lemma}\label{maximum principle}(Maximum Principle)
    Let $\bar{M}$ be a manifold with boundary. Denote $M$ as the interior of $\bar{M}$. Let $u_i$ be functions on $\bar{M}$, $1\leq i\leq n$, which are $C^2$ on $M$ and $C^0$ on $\bar{M}$. Let $c_{ij}$ be $C^0$ functions on $M$, $1\leq i,j\leq n$.
    Suppose $c_{ij}$ satisfy the following assumptions: \\
(a) cooperative: $c_{ij}\geq 0,~ i\neq j$,\\
(b) row diagonally dominant: $\sum\limits_{j=1}^{n}c_{ij}\leq 0,~ 1\leq i\leq n$.\\
Let $f_i$ be non-negative $C^0$ functions on $M$, $X_i$ be $C^0$ vector fields on $M$, and $g_i$ be Riemannian metrics on $M$, $1\leq i\leq n$ . Suppose $u_i$ satisfy
\begin{eqnarray*}
\Delta_{g_i} u_i+<X_i,\nabla_{g_i} u_i>+\sum_{j=1}^{n}c_{ij}u_j=f_i \text{ on } M, \quad 1\leq i\leq n.
\end{eqnarray*}
Then 
    $$\max_{x\in \bar{M}} \max\{u_i(x),0\}\leq \max_{x\in \partial M} \max\{u_i(x),0\},\quad 1\leq i\leq n.$$
\end{lemma}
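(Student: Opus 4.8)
The plan is to argue by contradiction at an interior maximum, exploiting the two sign hypotheses to collapse the coupled system into a scalar comparison, and then to upgrade the resulting degenerate equality case to a strict contradiction via a perturbation. First I would reduce to controlling the joint quantity $m:=\max_{1\le i\le n}\max_{x\in\bar M}\max\{u_i(x),0\}$ by its boundary analogue $m_\partial:=\max_i\max_{x\in\partial M}\max\{u_i(x),0\}$; once $m\le m_\partial$ is established, the stated bound for each individual index follows immediately. I would work under the assumption (which holds in every application of the lemma, where $\bar M=\bar\Omega$ is a bounded domain) that $m$ is attained. If $m\le 0$ there is nothing to prove, so assume $m>0$ and, for contradiction, that $m>m_\partial$; then $m$ is attained at an interior point $x_0\in M$ by some component $u_{i_0}$, with $u_{i_0}(x_0)=m$ and $u_j(x_0)\le\max_{\bar M}\max\{u_j,0\}\le m$ for all $j$.

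The algebraic core runs as follows. At the interior maximum $x_0$ of $u_{i_0}$ one has $\nabla_{g_{i_0}}u_{i_0}(x_0)=0$ and $\Delta_{g_{i_0}}u_{i_0}(x_0)\le 0$. Feeding this into the $i_0$-th equation and using $f_{i_0}\ge 0$ gives $\sum_j c_{i_0 j}u_j(x_0)\ge f_{i_0}(x_0)\ge 0$. On the other hand, cooperativity (a) gives $c_{i_0 j}\ge 0$ for $j\ne i_0$, so with $u_j(x_0)\le m=u_{i_0}(x_0)$ one obtains $\sum_j c_{i_0 j}u_j(x_0)\le m\sum_j c_{i_0 j}$, and row diagonal dominance (b) together with $m>0$ forces the right-hand side to be $\le 0$. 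Hence every inequality is an equality: in particular $f_{i_0}(x_0)=0$ and $\Delta_{g_{i_0}}u_{i_0}(x_0)=0$. This degenerate situation is exactly what a \emph{weak} maximum principle must defeat, and it does not yet contradict $m>m_\partial$.

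To break the degeneracy I would run the classical perturbation argument, which carries the real work. The goal is to build, on the bounded domain, a strictly positive auxiliary function $\psi$ for which the first-order part $P_i(\psi):=\Delta_{g_i}\psi+\langle X_i,\nabla_{g_i}\psi\rangle$ is uniformly large and positive for every $i$. Taking $\psi=e^{\lambda\phi}$ with $\phi$ a function of nowhere-vanishing gradient yields $P_i(\psi)=e^{\lambda\phi}\big(\lambda^2|\nabla\phi|_{g_i}^2+\lambda P_i(\phi)\big)$, so the positivity of the metrics $g_i$ (the $\lambda^2|\nabla\phi|_{g_i}^2$ term) lets one dominate, for $\lambda$ large, the bounded and non-positive zeroth-order contribution $\psi\sum_j c_{ij}\le 0$ produced by diagonal dominance. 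Then $u_i^\epsilon:=u_i+\epsilon\psi$ satisfies the \emph{strict} inequality $\Delta_{g_i}u_i^\epsilon+\langle X_i,\nabla_{g_i}u_i^\epsilon\rangle+\sum_j c_{ij}u_j^\epsilon>0$ for $\epsilon>0$. Repeating the interior-maximum computation for $u^\epsilon$ now makes the whole chain of inequalities strict and hence genuinely contradictory, so the joint maximum of $u^\epsilon$ over $\bar M$ is attained on $\partial M$; since $u_i\le u_i^\epsilon$, letting $\epsilon\to 0$ gives $m\le m_\partial$. The main obstacle is precisely the construction of a global $\psi$: a $\phi$ with nowhere-vanishing gradient need not exist on an arbitrary compact manifold with boundary, but on the bounded planar domains to which the lemma is applied one may simply take $\phi$ to be a Euclidean coordinate, so $\psi=e^{\lambda x^1}$ works; I would carry out the argument in that setting and isolate the chart-dependence as the only delicate point.
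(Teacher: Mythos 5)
Your reduction to an interior maximum of the joint quantity $m$ and the chain
$0\le f_{i_0}(x_0)\le\sum_j c_{i_0j}u_j(x_0)\le m\sum_j c_{i_0j}\le 0$
is exactly the algebraic core of the paper's argument, and you are right that it only yields a degenerate equality. The divergence is in how the degeneracy is broken. The paper stays scalar and local: since (a)+(b) give $c_{i_0i_0}\le 0$, the function $v=u_{i_0}-M_0$ satisfies $\Delta_{g_{i_0}}v+\langle X_{i_0},\nabla_{g_{i_0}} v\rangle+c_{i_0i_0}v\ge 0$ and attains its maximum $0$ at an interior point, so the scalar strong maximum principle forces $u_{i_0}\equiv M_0$, and continuity of $u_{i_0}$ up to $\partial M$ then bounds $M_0$ by the boundary data. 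This uses only that $c_{ij},X_i,g_i$ are continuous on the open interior $M$ --- which is all the lemma assumes --- and, run once more, it also delivers the strong version (Lemma~\ref{strong maximum principle}) that the comparison principle needs for its ``either $<$ or $\equiv$'' dichotomy.

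Your perturbation $u_i^\epsilon=u_i+\epsilon e^{\lambda\phi}$ is the classical alternative route to the weak statement, but it has a genuine gap under the stated hypotheses: to get a single $\lambda$ with $\lambda^2|\nabla\phi|_{g_i}^2+\lambda P_i(\phi)+\sum_j c_{ij}>0$ on all of $M$ you need $\sup_M\bigl|\sum_j c_{ij}\bigr|<\infty$, $\sup_M|X_i|_{g_i}<\infty$, and a uniform positive lower bound on $|\nabla\phi|_{g_i}$ up to $\partial M$. None of these are assumed ($c_{ij},X_i,g_i$ are only $C^0$ on the interior), and the paper really does apply the lemma with coefficients that blow up at the boundary: in the proof of Lemma~\ref{step1} the comparison is made against the supersolution $\tilde u_i\to+\infty$ on $\partial D_r$, so the linearized coefficient $c_i=k_i\int_0^1 e^{t(\cdots)+(1-t)(2\tilde u_i-\tilde u_{i-1}-\tilde u_{i+1})}dt$ is unbounded near $\partial D_r$ and no admissible $\lambda$ exists. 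You flagged the existence of $\phi$ with nonvanishing gradient as the delicate point, but the coefficients are the real obstruction; your argument proves the lemma only under an additional global boundedness/uniform-ellipticity hypothesis. A smaller point: from $m\le m_\partial$ you get each $u_i$ bounded by the \emph{joint} boundary maximum $\max_j\max_{\partial M}\max\{u_j,0\}$, not by its own; the per-index inequality does not ``follow immediately'' and is in fact false in general (take $n=2$, $c_{11}=-1$, $c_{12}=1$, $c_{21}=c_{22}=0$, $u_2\equiv 1$, $u_1=1-\cosh(x-\tfrac12)/\cosh\tfrac12$ on $[0,1]$), though the joint bound is what the paper's own final reduction actually establishes and is all that its applications require.
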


\begin{lemma}\label{strong maximum principle}(Strong Maximum Principle)  Under the assumptions in Lemma \ref{maximum principle},
suppose $c_{ij}$ further satisfy the following assumption: \\
(c) fully coupled: the index set $\{1,\cdots,n\}$ cannot be split up in two disjoint nonempty sets $\alpha,\beta$ such that $c_{ij}\equiv 0$ for $i\in\alpha,j\in \beta.$\\
Then (a)(b)(c) implies that if for some $i_0$, $u_{i_0}$ attains its non-negative maximum at an interior point, then $u_{i}\equiv M$, $1\leq i\leq n$.
\\
Furthermore, either the following assumption ensures the constant $M$ must be $0$.\\
(d) $\max\limits_{x\in \partial{M}}u_i\leq 0$, $1\leq i\leq n.$
\\
(e) $(\sum\limits_{j=1}^nc_{1j},\cdots,\sum\limits_{j=1}^nc_{nj})\neq (0,\dots,0)$, i.e. there exists $i_{0}\in \{1,\cdots, n\}$, $x_0\in M$, such that $\sum\limits_{j=1}^nc_{i_0j}(x_0)\neq 0$.\\
Together with the assumptions (a)(b)(c), the following assumption rules out the possibility that $u_i$ can attain its non-negative maximum at interior points.\\
(f) $(f_1,\cdots,f_n)\neq (0,\dots,0)$, i.e. there exists $i_{0}\in \{1,\cdots, n\}$, $x_0\in M$, such that $f_{i_0}(x_0)\neq 0$.
\end{lemma}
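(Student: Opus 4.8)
The plan is to reduce the system to the classical scalar Hopf strong maximum principle and then to propagate the conclusion between indices using the full coupling hypothesis (c). I abbreviate $L_i := \Delta_{g_i} + \langle X_i, \nabla_{g_i}\,\cdot\,\rangle$, work on a connected component of the interior, and write $M := \max_{1\le i\le n}\max_{\bar{M}}u_i$ for the global maximum, which by hypothesis is non-negative and attained at an interior point $p$ by the component $u_{i_0}$. The preliminary observation that makes each scalar operator amenable to Hopf's lemma is that (a) and (b) together force $c_{ii}\le -\sum_{j\ne i}c_{ij}\le 0$ for every $i$.

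First I would handle the single distinguished component. Put $w := M - u_{i_0}\ge 0$, so that $w$ vanishes at $p$. Using $c_{i_0 j}\ge 0$ and $u_j \le M$ for $j\ne i_0$, together with $f_{i_0}\ge 0$, the $i_0$-th equation rearranges to $L_{i_0}w + c_{i_0 i_0}w \le M\sum_{j}c_{i_0 j}\le 0$, the last step using (b) and $M\ge 0$. Thus $w$ is a non-negative supersolution of a scalar operator whose zeroth-order coefficient $c_{i_0 i_0}$ is non-positive, and $w$ attains an interior minimum $0$; applying the classical Hopf strong maximum principle to $-w$ forces $w\equiv 0$, i.e. $u_{i_0}\equiv M$.

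Next I would spread this to all indices. Feeding $u_{i_0}\equiv M$ back into its equation gives $\sum_j c_{i_0 j}u_j = f_{i_0}$; writing $u_j = (u_j - M) + M$ and cancelling the $j=i_0$ term yields $\sum_{j\ne i_0}c_{i_0 j}(u_j - M) = f_{i_0} - M\sum_j c_{i_0 j}\ge 0$. Since every summand on the left is $\le 0$, each term vanishes identically, so $c_{i_0 j}(u_j - M)\equiv 0$. Let $J := \{\,i : u_i\equiv M\,\}$, which contains $i_0$. If $J\neq\{1,\dots,n\}$, then (c) provides $i\in J$ and $j\notin J$ with $c_{ij}\not\equiv 0$; the identity just derived for the index $i$ shows $u_j = M$ at any interior point where $c_{ij}>0$, so $u_j$ attains $M$ at an interior point, and the first step applied to index $j$ gives $u_j\equiv M$, contradicting $j\notin J$. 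Hence $J=\{1,\dots,n\}$ and all $u_i\equiv M$. With all $u_i\equiv M$ each equation collapses to $M\sum_j c_{ij}=f_i$; since $M\ge 0$, $\sum_j c_{ij}\le 0$ and $f_i\ge 0$, this forces $f_i\equiv 0$ and $M\sum_j c_{ij}\equiv 0$ for every $i$. Assumption (d) then gives $M=0$ by evaluating $u_i\equiv M$ on $\partial M$; assumption (e) gives $M=0$ by evaluating $M\sum_j c_{i_0 j}=0$ at the point $x_0$ where $\sum_j c_{i_0 j}(x_0)\neq 0$; and assumption (f) is incompatible with $f_i\equiv 0$, so under (a)(b)(c)(f) no interior non-negative maximum can occur at all.

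The main obstacle is the propagation step: in contrast with the scalar case, the coupling only transmits the maximum where an off-diagonal coefficient is strictly positive, so the pointwise identity $c_{ij}(u_j-M)\equiv 0$ is by itself too weak. The resolution is to use full coupling (c) to locate a single interior contact point for the next component and then re-invoke the scalar Hopf lemma of the first step to propagate the maximum across the whole connected component, iterating until $J$ exhausts all indices. One should also keep the domain connected (or argue component by component) so that the contact points supplied by (c) lie in the region where the maximum has already been established.
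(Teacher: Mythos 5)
Your proof is correct and follows essentially the same route as the paper's: the scalar Hopf strong maximum principle applied to $M-u_{i_0}$ (made applicable by bounding the coupling terms through (a), (b) and $M\ge 0$), then the observation that $\sum_{j\ne i}c_{ij}(u_j-M)$ is a sum of non-positive terms forced to vanish, with full coupling (c) propagating constancy to every index, and finally the collapsed constant equations $M\sum_j c_{ij}=f_i$ disposing of (d), (e), (f). Your explicit reading of ``its non-negative maximum'' as the global maximum $M=\max_i\max_{\bar{M}}u_i$ is the right one (it is what the paper's proof actually uses, and the statement would be false under the per-component reading), and your iterative re-invocation of Hopf at a contact point where $c_{ij}>0$ is only a minor repackaging of the paper's partition argument into the sets $\alpha$ and $\beta$.
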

\begin{proof}
We prove Lemma \ref{maximum principle} and Lemma \ref{strong maximum principle} together.
We first assume $\max\limits_{x\in \partial M}u_i\leq 0$, $1\leq i\leq n$, which is the assumption (d). 
Let $M_i$ be the maximum of $u_i$ on $\bar{M}$ and $M_0=\max\limits_{i}M_i$. Let $\alpha$ be the subset of $\{1,\cdots,n\}$ such that $i\in\alpha$ if and only if $M_i=M_0$ and $M_i$ is achieved by an interior point. Let $\beta$ be the complement of $\alpha$. If $\alpha$ is empty, then any $i$ such that $M_i=M_0$ satisfies $M_i$ is only achieved on $\partial M$ and thus $M_0\leq 0.$ Hence, we finish the proof. Now we suppose $\alpha$ is not empty.

Claim 1: under the assumptions (a)(b), either (1) $M_0<0$ or (2) $M_0=0$ and $u_i\equiv M_i=M_0=0$ for $i\in \alpha$, $c_{ij}\equiv 0$ for $i\in \alpha,j\in \beta$. 

Suppose for $i_0\in\alpha$, $M_0$ is achieved by an interior point $x_0$, i.e. $$M_0=u_{i_0}(x_0)=\max_{i}\max_{x\in \bar{M}}u_i.$$ 
Then $$\Delta_{g_{i_0}} u_{i_0}+<X_{i_0},\nabla_{g_{i_0}} u_{i_0}>+c_{i_0i_0}(u_{i_0}-M_0)+c_{i_0i_0}M_0+\sum_{j\neq i_0}^{n}c_{i_0j}u_j\geq 0.$$
Suppose $M_0\geq 0$. By the definition of $M_0$ and the assumptions (a)(b), we obtain
$$\Delta_{g_{i_0}} (u_{i_0}-M_0)+<X_{i_0},\nabla_{g_{i_0}} (u_{i_0}-M_0)>+c_{i_0i_0}(u_{i_0}-M_0)\geq 0.$$
From the assumptions (a)(b), $c_{i_0i_0}\leq 0$. Then from the strong maximum principle for single equations, we have $u_{i_0}\equiv M_0$, since the maximum of $u_{i_0}-M_0$ is $0$ and achieved by the interior point $x_0$. Since $u_{i_0}\leq 0$ on the boundary, we have $M_0\leq 0$, then $M_0=0.$
    Then we have
    $\sum\limits_{j\neq i_0}c_{i_0j}u_j\geq 0.$
    Since $u_j\leq M_0=0$, from the assumption (a), we have $c_{i_0j}u_j=0$, $j\neq i_0.$ For $j\in \beta$, $u_j<0$ in the interior. Hence for $j\in \beta$,  $c_{i_0j}\equiv 0$. 
    From the discussion above, we obtain $u_i\equiv M_i=0$ for  $i\in \alpha$ and $c_{ij}\equiv 0$ for $i\in \alpha,j\in \beta$. We finish the proof of Claim 1.

    Claim 2: Under the assumptions (a)(b)(c), if $M_0\geq 0$, then $u_i\equiv 0$ for every $1\leq i\leq n$.

    If $M_0<0$, then by the definition of $M_0,$ all $u_i<0$. Otherwise from Claim 1, the index set $\{1,\cdots,n\}$ is split up in two disjoint subsets $\alpha,\beta$ such that $c_{ij}\equiv 0$ for $i\in\alpha,j\in \beta.$ It contradicts to the assumption (c) unless $\beta$ is empty. If $\beta$ is empty, then $u_i\equiv 0$ for every $1\leq i\leq n$. We finish the proof of Claim 2.


    Now we remove the assumption $\max\limits_{x\in \partial M}u_i\leq 0$. Consider $$\tilde{u}_i=u_i-\max\limits_{x\in \partial M} \max\{u_i(x),0\}.$$
    Then $\max\limits_{x\in \partial M}\tilde{u}_i\leq 0.$ And $\tilde{f}_i=f_i-\max\limits_{x\in \partial M}\max\{u_i(x),0\}\cdot\sum\limits_{j=1}^nc_{ij}\geq 0.$ From Claim 1 above, we have $\tilde{u}_i\leq 0$, which implies
$$\max_{x\in \bar{M}} \max\{u_i(x),0\}\leq \max_{x\in \partial M} \max\{u_i(x),0\},\quad 1\leq i\leq n.$$
    
    Now we consider the strong maximum principle. If for some $i_0$, $u_{i_0}$ attains its non-negative maximum at an interior point, then $\tilde{u}_{i_0}$ attains its maximum $0$ at this point. From Claim 2 above, $u_i\equiv M_i$ for every $1\leq i\leq n$. Let $M_0=\max\limits_{i}M_i$. If $M_0=0$, then all $u_i$'s are identically $0$. In this case $f_i=0$, $1\leq i\leq n$, which contradicts to the assumption (f). Now suppose $M_0>0$. Let $\alpha_1$ be the subset of $\{1,\cdots,n\}$ such that $i\in\alpha_1$ if and only if $M_i=M_0$, and $\beta_1$ be the complement of $\alpha_1$. Then for  $i_0\in \alpha_1$, 
    \begin{eqnarray*}
0\leq f_{i_0}=\sum\limits_{j=1}^nc_{i_0j}u_j&=&c_{i_0i_0}M_{i_0}+\sum\limits_{j\neq i_0}c_{i_0j}M_j\\
&=&\sum\limits_{j=1}^nc_{i_0j}M_{0}+\sum\limits_{j\neq i_0}c_{i_0j}(M_j-M_{0})\leq 0.
    \end{eqnarray*}
    Therefore $\sum\limits_{j=1}^nc_{i_0j}=0$ and $c_{i_0j}(M_j-M_{0})=0$, $j\neq i_0$. And $f_{i_0}=0$. Then $c_{ij}\equiv 0$ for $i\in\alpha_1,j\in\beta_1$, which contradicts to the assumption (c) unless $\beta_1$ is empty. Thus under assumption (a)(b)(c), if for some $i_0$, $u_{i_0}$ attains its non-negative maximum at an interior point, then $u_i\equiv M_0, 1\leq i\leq n.$ And note that $M_0\geq 0$. 
In this case, $f_i=0$, $1\leq i\leq n$, which contradicts with the assumption (f).

If $M_0>0$, then $\sum\limits_{j=1}^nc_{ij}=0$ for $1\leq i\leq n,$ which contradicts with the assumption (e). Under the assumption (d), by Claim 1, we have $M_0\leq 0$. Thus $M_0=0$. 
\end{proof}
\begin{remark}\label{assumption (c)}
The assumption $(c)$ is easy to check by the following procedure. If $1\in\alpha$, consider $\beta_1=\{j: c_{1j}\equiv 0\}$, $\alpha_1=\{1,\cdots,n\}\setminus \beta_1$. Then $\alpha_1\cap \beta=\emptyset$. Then $\alpha_1\subseteq \alpha$. Denote $\alpha_0=\{1\}$. If $\alpha_{1}\subseteq\alpha_0$, then $\alpha=\alpha_0$ gives such a partition. If $\alpha_{1}\nsubseteq\alpha_0$, consider $\beta_2=\{j: c_{ij}\equiv 0, i\in \alpha_{0}\cup\alpha_1 \}$, $\alpha_2=\{1,\cdots,n\}\setminus \beta_2$. Then $\alpha_2\subseteq \alpha$. If $\alpha_2\subseteq \alpha_0\cup\alpha_1$, then $\alpha=\alpha_0\cup\alpha_1$ gives such a partition. If $\alpha_{2}\nsubseteq\alpha_0\cup\alpha_1$, consider $\beta_3=\{j: c_{ij}\equiv 0, i\in \bigcup_{k=0}^{2}\alpha_k \}$, $\alpha_3=\{1,\cdots,n\}\setminus \beta_3$. Repeat this procedure, then either we obtain a partition $\alpha,\beta$ such that $c_{ij}\equiv 0$ for $i\in\alpha,j\in\beta$ or we show that $1\notin \alpha$. If $1\notin \alpha$, repeat the procedure above for $2,3,\cdots, n$. Then we can show whether such a partition exists or not.
\end{remark}

\bibliographystyle{amsalpha}
\bibliography{bib}

\end{document}